\newcommand{\Inv}{\operatorname{Inv}}
\newcommand{\Fix}{\operatorname{Fix}}
\newcommand{\CAT}{\operatorname{CAT}}
\newcommand{\ii}{\mathbf{i}}
\newcommand{\jj}{\mathbf{j}}
\newcommand{\fin}{{\rm fin}}
\newcommand{\limW}{{\mathcal W}}
\newcommand{\bTX}{\partial_T X}
\newcommand{\bTXi}{\partial_T X(\mathbf{i})}
\newcommand{\bTXj}{\partial_T X(\mathbf{j})}
\newcommand{\bX}{\partial X}
\newcommand{\cbM}{\partial \mathring M}
\newcommand{\cM}{\overline{M}}
\newcommand{\bM}{\partial M}
\newcommand{\Mxy}{\mathcal{M}(x,y)}
\newcommand{\MF}{\mathcal{M}(F)}
\newcommand{\Mxi}{\mathcal{M}(\xi)}
\newcommand{\Rxi}{R(\xi)}
\newcommand{\Wxi}{W(\xi)}
\newcommand{\bMxi}{\partial\Mxi}
\newcommand{\cF}{\mathcal{F}}
\newcommand{\cR}{\mathcal{R}}
\newcommand{\cS}{\mathcal{S}}
\newcommand{\cT}{\mathcal{T}}
\newcommand{\R}{\mathbb{R}}
\newcommand{\Z}{\mathbb{Z}}
\newcommand{\Pc}{\operatorname{Pc}}
\newcommand{\s}{\sigma}
\newcommand{\la}{\langle}
\newcommand{\ra}{\rangle}
\newtheorem{theorem}{Theorem}
\newtheorem{proposition}[theorem]{Proposition}
\newtheorem{lemma}[theorem]{Lemma}
\newtheorem{corollary}[theorem]{Corollary}
\theoremstyle{definition}
\newtheorem{remark}{Remark}
\newtheorem{definition}{Definition}
\newtheorem{example}{Example}
\newtheorem*{examples}{Examples}
\title{Infinite reduced words \\and\\ the Tits boundary of a Coxeter group}
\author{Thomas Lam}\address{Department of Mathematics, University of Michigan,
2074 East Hall, 530 Church Street, Ann Arbor, MI 48109-1043, USA}
\email{tfylam@umich.edu}\thanks{T.L. was supported by NSF grants DMS-0901111 and DMS-1160726, and by a
Sloan Fellowship.  A.T. was supported by ARC grant DP110100440 and by an Australian Postdoctoral Fellowship.}
\author{Anne Thomas}\address{School of Mathematics and Statistics F07, University of Sydney NSW 2006, Australia}
\curraddr{School of Mathematics and Statistics, University of Glasgow, 15 University Gardens, Glasgow G12 8QW, United Kingdom}
\email{anne.thomas@glasgow.ac.uk}
\date{\today}
\begin{document}
\maketitle

\begin{abstract}  Let $(W,S)$ be a finite rank Coxeter system with $W$ infinite.  We prove that the limit weak order on the blocks of infinite reduced words of $W$ is encoded by the topology of the Tits boundary $\bTX$ of the Davis complex $X$ of $W$.  We consider many special cases, including $W$ word hyperbolic, and $X$ with isolated flats.   We establish that when $W$ is word hyperbolic, the limit weak order is the disjoint union of weak orders of finite Coxeter groups.   We also establish, for each boundary point $\xi$, a natural order-preserving correspondence between infinite reduced words which ``point towards" $\xi$, and elements of the reflection subgroup of $W$ which fixes $\xi$.
\end{abstract}

\section{Introduction}

Let $(W,S)$ be a finite rank Coxeter system with $W$ infinite.  In this paper we compare the {\it limit weak order} on the {\it infinite reduced words} of $W$ with the topology of the Tits boundary $\bTX$ of the Davis complex $X$ of $W$.

\subsection{Infinite reduced words and limit weak order}

Let $S = \{s_i \mid i \in I\}$ denote the simple generators of $W$, and $\{\alpha_i \mid i \in I\}$ denote the simple roots.  An infinite reduced word $\ii = i_1i_2i_3 \cdots$ is an infinite word with letters in $I$ such that each initial finite subword $i_1i_2 \cdots i_k$ is a reduced word for $W$.  The inversion set $\Inv(\ii)$ of an infinite reduced word $\ii$ is the set of positive roots
$$
\Inv(\ii) = \{s_{i_1}s_{i_2} \cdots s_{i_{k-1}} \alpha_{i_k} \mid k = 1,2,\ldots \}.
$$

In Section~\ref{s:infinite reduced words} we review the notion of a braid limit $\ii \to \jj$ of two infinite reduced words, which roughly says that $\jj$ can be obtained from $\ii$ by an infinite sequence of braid moves.  Braid limits give rise to an equivalence relation on infinite reduced words, and a partial order on the equivalence classes, called the limit weak order, and denoted $\limW$.  Equivalently, we have $\ii \leq \jj$ in the limit weak order if and only if $\Inv(\ii) \subseteq \Inv(\jj)$.  The limit weak order generalises the usual weak order of a Coxeter group (see for example \cite{Bjorner-Brenti}) to infinite reduced words.  

The poset $\limW$ is in general quite complicated, and can, for example, contain infinite intervals.  Following \cite{Lam-Pylyavskyy}, we decompose $\limW$ into {\it blocks}, allowing us to study $\limW$ by studying two more manageable problems.  We say that two infinite reduced words $\ii$ and $\jj$ are {\it in the same block} if $\Inv(\ii)$ and $\Inv(\jj)$ differ by finitely many elements.   The limit weak order descends to a partial order on blocks.

As we shall see, on the one hand, the limit weak order on blocks can be given a purely topological description in terms of the Tits boundary of the Davis complex.   On the other hand, the limit weak order restricted to a block $B(\ii)$ is the usual weak order of a (usually infinite) Coxeter group.  

Let us remark on the history of infinite reduced words and the limit weak order.  Cellini and Papi \cite{Cellini-Papi} and later Ito \cite{Ito} studied certain subsets of positive roots of an affine Weyl group $W$ called biconvex sets.  These biconvex sets contain as a special case the inversion sets of infinite reduced words.  In particular, the inversion sets of infinite reduced words of affine Weyl groups are completely classified in these works.  Cellini and Papi were motivated by the study of initial sections of Dyer's total reflection orders, while Ito was motivated by the study of convex bases of quantized universal enveloping algebras $U_q({\mathfrak g})$.  Lam and Pylyavskyy \cite{Lam-Pylyavskyy} introduced the limit weak order on infinite reduced words in the context of the theory of total positivity of loop groups.  They described this partial order for affine Weyl groups (explained later in the introduction).   Dyer \cite{dyer_weak} studied a generalisation of weak order to biconvex sets of an arbitrary Coxeter group, and formulated a number of conjectures on the lattice structure of this generalisation. 

Our present work may have applications in each of these contexts, for example to the yet-to-be-developed theory of total positivity of Kac--Moody groups, or, as pointed out to us by an anonymous referee, to the conjectures in \cite{dyer_weak}.

\subsection{Davis complex and its Tits boundary}
The Davis complex $X$ is a proper, complete, $\CAT(0)$ metric space on which the Coxeter group $W$ acts properly discontinuously and cocompactly by isometries \cite{davis-book}.   The visual boundary $\partial X$, consisting of equivalence classes of geodesic rays in $X$, can be equipped with the Tits metric, giving a metric space $\bTX$ which we call the Tits boundary of $X$ (see \cite{bridson-haefliger}).  We use the Davis complex $X$ rather than the classical Tits cone because $X$ has a cocompact $W$-action and its collection of walls is locally finite, and because the $\CAT(0)$ structure on the Davis complex allows us to define the Tits boundary and to use many results from $\CAT(0)$ geometry.   The Davis complex was introduced in order to study geometric and topological properties of infinite non-affine Coxeter groups.  The Tits boundary of a $\CAT(0)$ space generalises the Tits boundary of a Euclidean building or symmetric space of non-compact type, which is a spherical building.  We recall background on the Davis complex and its Tits boundary in Section~\ref{s:preliminaries}.

There is a bijection between reflections $r \in W$ and {\it walls} $M(r) \subset X$.  Each infinite reduced word $\ii$ naturally gives rise to a path $\gamma: [0,\infty) \to X$, starting in the identity chamber of $X$, so that the walls crossed by $\gamma$ can be identified with the inversion set $\Inv(\ii)$ of $\ii$ (see also Section~\ref{s:infinite reduced words}).  

The (possibly empty) boundary of a wall $M$ is a subset $\bM$ of the boundary $\bTX$.  We show in Section~\ref{s:walls} that these boundaries of walls induce an arrangement in $\bTX$ with many properties similar to a hyperplane arrangement; in particular, each boundary $\bM$ separates $\bTX$ into ``half-spaces'' $\bM^+$ and $\bM^-$.  We define an equivalence relation on $\bTX$ essentially via the facial structure of this arrangement.  For $\xi \in \bTX$, we let $C(\xi)$ denote the equivalence class of $\xi$.   An important technical tool in Section~\ref{s:walls} is the Parallel Wall Theorem of Brink and Howlett \cite{brink-howlett}.

\subsection{Limit weak order on blocks and the Tits boundary}

In Section~\ref{s:bTXi}, we associate to each infinite reduced word $\ii$ a nonempty subset $\bTXi \subset \bTX$ as follows.  The set $\bTXi$ consists of points $\xi \in \bTX$ such that there is a geodesic ray $[p,\xi) \subset X$ pointing in the direction of $\xi$, with the property that $\Inv(\ii)$ is the disjoint union of the walls crossed by $[p,\xi)$, and the walls separating the identity from $p$.  Our first main theorem is the following:

\begin{theorem}\label{thm:main}
The subsets $\bTXi \subset \bTX$ have the following properties:
\begin{enumerate}
\item
For each $\ii$ and $\jj$ we have $\bTXi = \bTX(\jj)$ or $\bTXi \cap \bTX(\jj) = \emptyset$.  We have $\bTXi = \bTX(\jj)$ if and only if $\ii$ and $\jj$ are in the same block.
\item Each $\bTXi$ is an equivalence class $C(\xi)$, and each equivalence class $C(\xi)$  is of the form $\bTXi$ for some infinite reduced word $\ii$.  Thus the $\bTXi$ form a partition of $\bTX$.
\item
Each $\bTXi$ is a path-connected, totally geodesic subset of $\bTX$.
\item
The closure of $\bTXi$ in $\bTX$ is the following union:
$$
\overline{\bTXi} = \bigcup_{\jj \leq \ii} \bTXj
$$
where $\leq$ denotes the limit weak order.
\end{enumerate}
\end{theorem}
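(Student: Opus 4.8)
The plan is to translate every assertion about the subsets $\bTXi$ into a statement about sets of walls, and then to read the four conclusions off the geometry of the wall arrangement in $\bTX$. For a boundary point $\xi$ let $H(\xi)$ denote the set of walls $M(r)$ whose boundary $\partial M(r)$ separates $\xi$ from the identity direction, equivalently the set of walls crossed by a geodesic ray running out to $\xi$, considered up to finite symmetric difference (changing the basepoint of the ray alters this set by only finitely many walls). The linchpin of the whole argument is a reformulation of membership: I would prove that $\xi \in \bTXi$ if and only if $\Inv(\ii)$ agrees with $H(\xi)$ up to finite symmetric difference, and that whenever this holds the required disjoint decomposition $\Inv(\ii) = (\text{walls crossed by } [p,\xi)) \sqcup (\text{walls separating the identity from } p)$ can in fact be realized by a suitable basepoint $p$. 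The nontrivial direction is the realization: one must move $p$ so that the finitely many walls in the symmetric difference are exactly the walls separating the identity from $p$, which I would establish using $\CAT(0)$ convexity of walls together with the fact that a finite set of walls separating two regions can be straddled by an appropriately chosen chamber.

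Granting this dictionary, part (1) is immediate: if $\ii$ and $\jj$ are in the same block then $\Inv(\ii)$ and $\Inv(\jj)$ differ by finitely many walls, so they satisfy identical membership conditions and $\bTXi = \bTXj$; conversely, if the two subsets share a single point $\xi$ then both $\Inv(\ii)$ and $\Inv(\jj)$ agree with $H(\xi)$ up to finite difference, hence with each other, so $\ii$ and $\jj$ lie in the same block. For part (2) I would identify the facial equivalence class $C(\xi)$ with the set of $\eta$ for which $H(\eta)=H(\xi)$ up to finite difference: two points lie in the same face of the arrangement exactly when they are separated from the identity by the same walls, since the faces are by definition the cells cut out by the boundaries $\partial M(r)$. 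The dictionary then gives $\bTXi = C(\xi)$. For surjectivity I would start from an arbitrary $\xi$, choose a geodesic ray from the identity vertex to a generic point of $C(\xi)$, and record the walls it crosses; because a geodesic in a $\CAT(0)$ space crosses each convex wall at most once, the associated word has no repeated reflection, so after passing to the combinatorial gallery it is reduced, and its inversion set is $H(\xi)$, exhibiting $C(\xi)$ as some $\bTXi$.

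For part (3) I would use that $\bTXi = C(\xi)$ is an intersection over all reflections $r$ of closed half-spaces of $\bTX$ (taking the wall boundary $\partial M(r)$ itself when it passes through the face). Each $\partial M(r)$ is totally geodesic in the Tits metric, and each closed half it bounds is geodesically convex: the reflection $r$ acts as a Tits isometry fixing $\partial M(r)$ and interchanging the two halves, so any Tits geodesic of length less than $\pi$ between two points of one closed half that strayed into the other could be folded by $r$ into a path of equal length in the closed half, contradicting uniqueness of geodesics. An intersection of such convex sets is geodesically convex, hence totally geodesic and path-connected, giving (3).

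The main obstacle is part (4), where the combinatorially defined limit weak order must be matched with the topological closure, and the strategy has two halves. First I would show that $\jj \leq \ii$ in the limit weak order is equivalent to the inclusion $H(\jj) \subseteq H(\ii)$ up to finite difference; this is where the braid-limit definition of the order has to be unwound, using that a braid limit $\ii \to \jj$ reorganizes finitely many walls at a time and in the limit can only delete walls, while conversely any such inclusion of biconvex wall sets is witnessed by a sequence of braid moves. Second I would match this inclusion with the closure: the forward inclusion $\overline{\bTXi} \subseteq \bigcup_{\jj \leq \ii}\bTXj$ is a limiting argument, since being separated by a given wall is a closed condition and separating walls can only be lost, not gained, under a limit, forcing $H(\xi') \subseteq H(\ii)$ up to finite difference for any $\xi' \in \overline{\bTXi}$. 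The reverse inclusion requires constructing, for each $\jj \leq \ii$, an explicit Tits-geodesic path from a point of $\bTXj$ into $\bTXi$ that realizes $\bTXj$ as a boundary face of $\bTXi$. Controlling this path so that it crosses exactly the walls of $H(\ii) \setminus H(\jj)$ and no others is the crux of the theorem, and I expect it to demand the full strength of the facial arrangement structure together with the local conical geometry of $\bTX$ near each $\partial M(r)$.
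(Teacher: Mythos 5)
Your overall strategy---translating everything into statements about inversion sets of walls---is in fact the paper's strategy, but the linchpin ``dictionary'' on which your whole argument rests is asserted rather than proved, and it is precisely the hard content of parts (1) and (2). You claim that $\xi \in \bTXi$ iff $\Inv(\ii)$ agrees with the separating-wall set of $\xi$ up to finite symmetric difference, and that two boundary points lie in the same face $C(\xi)$ iff their separating-wall sets agree up to finite difference, ``since the faces are by definition the cells cut out by the boundaries.'' This is not a definition-chase: the facial relation is a three-valued condition per wall ($\xi \in \bM$, $\xi \in \bM^+\setminus\bM$, or $\xi\in\bM^-\setminus\bM$), and a priori two points in \emph{different} faces could have inversion sets differing by only finitely many walls. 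Ruling that out is Lemma \ref{l:infinite}, whose proof runs through the Parallel Wall Theorem of Brink--Howlett (Lemma \ref{l:separated}): one wall separating $\xi$ from $\xi'$ is bootstrapped into infinitely many. Your proposal never invokes this theorem or any substitute, so the dictionary is unproven. Your realization step is also more constrained than ``straddle finitely many walls by a chamber'': if some wall of $\Inv(\ii)\setminus\Inv(\xi)$ did \emph{not} have $\xi$ in its boundary, then no basepoint $p$ works at all (putting $p$ beyond that wall forces $[p,\xi)$ to cross back over it, a forbidden double crossing), so one must first prove such walls cannot occur---again via the heavy lemmas. Finally, ``each $\bTXi$ \emph{is} an equivalence class'' needs $\bTXi \neq \emptyset$, which the paper obtains from a compactness/horofunction argument (\cite[Proposition II.8.19]{bridson-haefliger}); your proposal has no replacement for this.

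Two further genuine problems. First, you characterize the limit weak order by inclusion of inversion sets \emph{up to finite difference}; by Lemma \ref{l:LP} it is \emph{exact} inclusion, and the distinction is the point of the Remark following Theorem \ref{t:closure partition}: $\bTX(\jj') \subseteq \overline{\bTXi}$ does not imply $\jj' \leq \ii$. Part (4) is stated for the exact order, and the paper achieves exact inclusion by fixing a basepoint $p$ realizing $\Inv(\ii)$ and replaying the gallery $[0,p]\cup[p,\xi')$, using Lemma \ref{l:rays and walls}(3); your up-to-finite-difference version would only compare blocks and proves a different (indeed, as stated about the order, false) assertion. Second, your reverse inclusion in (4) is explicitly not an argument (``I expect it to demand the full strength\dots''); the paper proves the contrapositive using Proposition \ref{p:closure}, Lemma \ref{l:finite set hyperplanes}, and once more the Parallel Wall Theorem to manufacture a line of pairwise disjoint walls separating $\xi$ from $\xi'$. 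A smaller but real gap in (3): convexity of an intersection of half-spaces does not yield path-connectedness in a $\CAT(1)$ space, because two points need not be joined by any geodesic; one must first show points of $C(\xi)$ are at Tits distance $<\pi$ and not joined by a geodesic in $X$, which is Lemmas \ref{l:distance} (resting on the angle Lemma \ref{l:angle}) and \ref{l:finite set hyperplanes}---both absent from your outline.
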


Thus {\it the topology of the Tits boundary encodes the limit weak order on the blocks of infinite reduced words}.  We prove parts (1) and (2) of Theorem \ref{thm:main} in Section~\ref{s:bTXi}, using results from Section~\ref{s:walls}, and parts (3) and (4) of Theorem~\ref{thm:main} are established in Sections~\ref{s:proof 3} and~\ref{s:proof 4} respectively.

A natural direction to explore would be the question: {\it to what extent does the limit weak order on blocks (or the limit weak order itself) determine the homotopy or homeomorphism type of the Tits boundary?}

\medskip

We thank Jean L\'ecureux for the observation that some of the
combinatorial objects we consider in Theorem~\ref{thm:main} appear in a different guise in his
work with Caprace \cite{Caprace-Lecureux}. There is a bijection between equivalence
classes of infinite reduced words and elements of the minimal
combinatorial compactification $\mathcal{C}_1(X)$, and blocks of
infinite reduced words are essentially the same as the spaces $X^\xi$
defined in Section~5.1 of \cite{Caprace-Lecureux}.

When $W$ is right-angled, the Davis complex $X$ is naturally a $\CAT(0)$ cube complex,
which coincides with the $\CAT(0)$ cube complex constructed for an arbitrary
Coxeter system by Niblo and Reeves (see Lemma 6 of \cite{niblo-reeves}).  At least in the case that
$W$ is right-angled, there seems to be a close relationship between 
blocks of infinite reduced words and the simplices in the simplicial boundary of $X$.  The
simplicial boundary of a $\CAT(0)$ cube complex was introduced by Hagen
in \cite{hagen}.

\subsection{Limit weak order restricted to a block}

In Section \ref{s:walls2} we associate to each $\xi \in \bTX$ the subgroup $\Wxi$ of $W$ generated by the set $\Rxi$ of reflections in walls which have $\xi$ in their boundary.  The subgroup $\Wxi$ is itself a Coxeter group.  We prove that the set of reflections in $\Wxi$ is equal to $\Rxi$, and then apply results of Deodhar \cite{Deodhar} and Dyer \cite{Dyer} to establish our second main result.

\begin{theorem}\label{thm:Wxi intro} Suppose $C(\xi)= \bTXi$.  Then the block $B(\ii)$ is in bijection with the set of elements of $\Wxi$, and the limit weak order on the block $B(\ii)$ corresponds naturally to the weak order on $\Wxi$. 
\end{theorem}

\noindent This is proved as Theorem~\ref{t:partial order Wxi}.

\subsection{Examples}
Section \ref{s:classes} discusses many special cases and examples.  We briefly describe the most important here.

In Section~\ref{s:affine} we consider the case that $(W,S)$ is an irreducible affine Coxeter group.  Let $W_\fin$ be the corresponding finite Weyl group.  Then $\bTX$ is isometric to a sphere with dimension equal to the rank of $W$, and the partition of $\bTX$ by the $C(\xi)$ or $\bTXi$ is essentially the (spherical) braid arrangement of $W_\fin$.  Furthermore, for each piece $C = C(\xi)$ the set $\{\ii \mid \bTXi = C\}$ of equivalence classes of infinite reduced words is either a singleton, or can be identified with a possibly reducible affine Coxeter group of lower rank.  These statements follow from the works of Cellini--Papi \cite{Cellini-Papi}, Ito \cite{Ito} and Lam--Pylyavskyy \cite{Lam-Pylyavskyy}.

Another interesting case is when $W$ is word hyperbolic, as discussed in Section~\ref{s:hyperbolic}.  Such Coxeter groups were characterised by Moussong (see \cite{davis-book}).  We can characterise word hyperbolic Coxeter groups in several new ways, and the restriction of Theorem \ref{thm:main} to this case is particularly elegant:
\begin{theorem}\label{thm:hyp}
The following are equivalent: 
\begin{enumerate}
\item The group $W$ is word hyperbolic.
\item For any infinite reduced word $\ii$, the set $\bTXi$ consists of a single point $\xi = \xi(\ii)$.
\item For any $\xi \in \bTX$, the group $W(\xi)$ is finite.
\end{enumerate}
Moreover if $W$ is word hyperbolic then two infinite reduced words $\ii$ and $\jj$ are comparable only if $\ii$ and $\jj$ are in the same block, and the limit weak order restricted to a block $B(\ii)$ is isomorphic to the weak order of the (possibly trivial) finite Coxeter group $W(\xi(\ii))$.
\end{theorem}
Thus the limit weak order of a word hyperbolic $W$ is the disjoint union of weak orders of finite Coxeter groups.    (Jean L\'ecureux points out that the finiteness of $W(\xi(\ii))$ in Theorem \ref{thm:hyp} can also be deduced from the proof of Theorem 5.13 in \cite{Caprace-Lecureux}.)   From the point of view of braid relations and infinite reduced words, these results seem far from obvious, as we illustrate in  Example \ref{eg:2d} below.


\begin{example}\label{eg:2d}  Consider the Coxeter group
$$ W = \langle s_1, s_2, s_3, s_4, s_5 \mid s_i^2 = (s_i s_{i+1})^2 = 1 \rangle.  $$
The generators $s_i$ may be viewed as reflections in the sides of a regular right-angled hyperbolic pentagon and the Davis complex $X$ is the induced tessellation of the hyperbolic plane by such pentagons, depicted in Figure~\ref{fig:pentagons}.  (This figure uses the Poincar\'e disk model of the hyperbolic plane.  All of the pentagons in the tessellation are isometric, although they appear to become arbitrarily small.) In this example, the Tits boundary $\bTX$ is the boundary of the hyperbolic plane, that is, a circle, and the distance between any two points in $\bTX$ is infinite.  Thus in particular, $\bTX$ has the discrete topology.  Each wall is a hyperbolic geodesic connecting two distinct boundary points.  The boundaries of walls are dense in $\bTX$, by minimality of the action of $W$ on the boundary.  Furthermore, each point of $\bTX$ is in the boundary of at most one wall.  For otherwise, we would have two disjoint walls that become arbitrarily close, which is impossible since each chamber of $X$ is a regular pentagon with fixed side length.

\begin{figure}
\begin{center}
\begin{overpic}[scale=0.5]{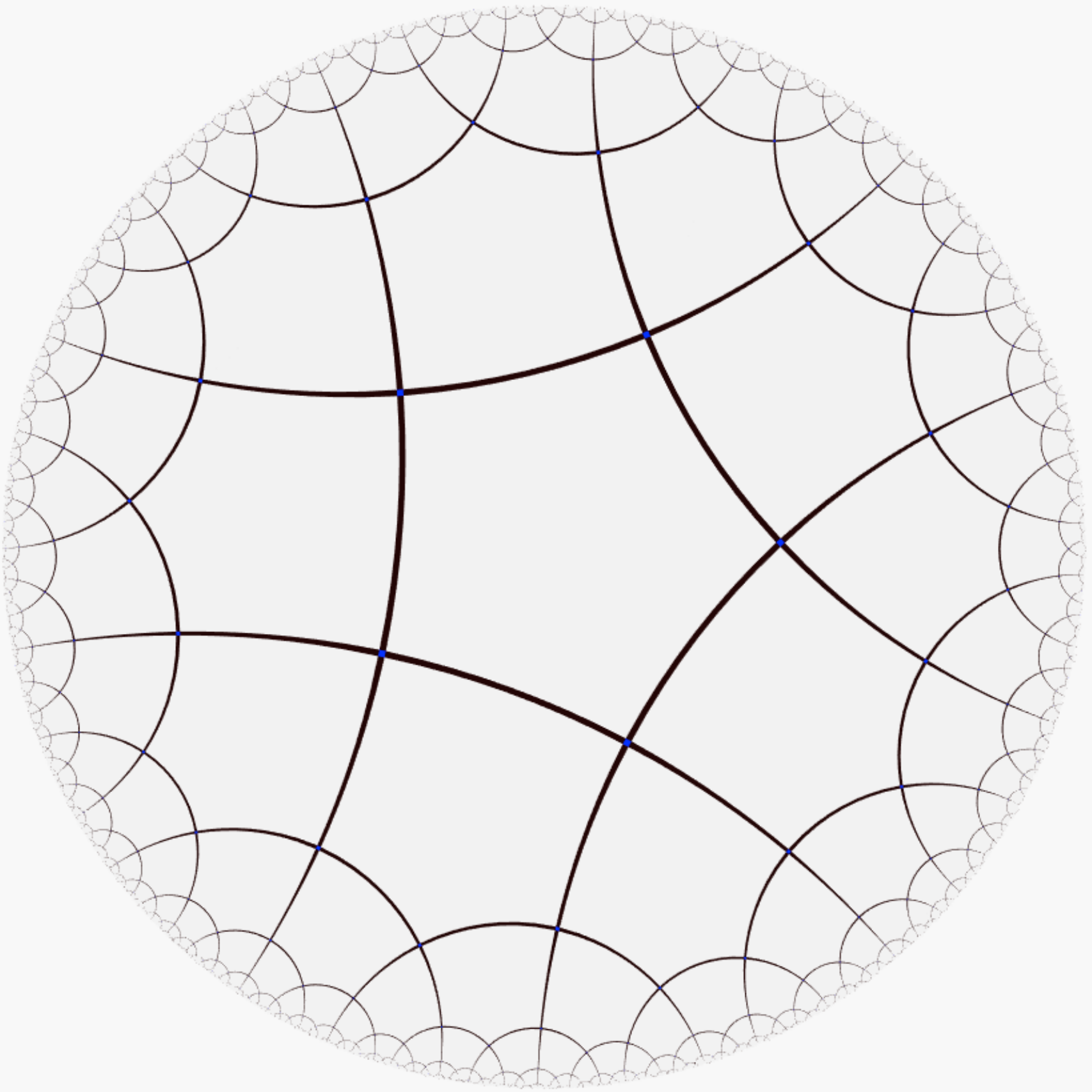}
\put(51,50){$e$}
\put(73,64){$1$}
\put(71,35){$2$}
\put(42,25){$3$}
\put(25,51){$4$}
\put(45,74){$5$}
\put(82,49){$12$}
\put(63,78){$15$}
\put(60,20){$23$}
\put(23,32){$34$}
\put(26,71){$45$}
\put(86,32){$25$}
\put(89,39){\small{125}}
\put(93.5,36){\tiny{1252}}
\put(92,32){\tiny{252}}
\put(66,59){$M$}
\put(97,33){$\xi$}
\put(86,65){$13$}
\put(80,76){$14$}
\put(86,73){\small{134}}
\put(91,70){\tiny{135}}
\put(90.5,72.5){\tiny{1354}}
\put(92.5,74){\tiny{13542}}
\end{overpic}
\end{center}
\caption{The Davis complex for Example \ref{eg:2d}}
\label{fig:pentagons}
\end{figure}

Suppose $\ii$ is an infinite reduced word with $\xi = \xi(\ii)$.  If $\xi$ is in the boundary of the wall $M = M(r)$ then the block $B(\ii)$ consists of two equivalence classes of infinite reduced words, corresponding to the two elements of the finite Coxeter group $W(\xi(\ii)) = \langle r \rangle$.  One of these equivalence classes consists of infinite reduced words which do not cross $M$ but which ``follow $M$ out to $\xi$", and this equivalence class corresponds to the trivial element of $W(\xi(\ii))$.  The other equivalence class in the block $B(\ii)$ consists of infinite reduced words which cross $M$ and then immediately ``follow $M$ out to $\xi$".  If $\xi$ is not in the boundary of any wall, then $B(\ii)$ contains a single equivalence class of infinite reduced words and $W(\xi(\ii))$ is trivial.

For example, as shown in Figure~\ref{eg:2d}, we have a braid limit
$$
\ii = 125252525\cdots  \rightarrow 25252525\cdots = \ii'
$$
of two inequivalent infinite reduced words $\ii$ and $\ii'$ such that $\xi = \xi(\ii) = \xi(\ii')$ lies on the boundary of the wall $M = M(s_1)$ (see Section \ref{s:infinite reduced words} for notation).  Thus $B(\ii) = B(\ii') = \{\ii,\ii'\}$ contains exactly two equivalence classes.  On the other hand the infinite reduced word
$$
\jj = 1352413524 \cdots 
$$
is such that $\xi(\jj)$ is not in the boundary of any wall.  To see this it is enough to check that there does not exist an infinite reduced word $\jj'$, not braid equivalent to $\jj$, satisfying either $\jj \to \jj'$ or $\jj' \to \jj$.  The fact that $\jj \to \jj'$ is impossible is easy: each initial finite subword of $\jj$ is the {\it unique} reduced word of the corresponding element of $W$.  So suppose $\jj' = j'_1j'_2 \cdots \to \jj$.  Then the letter $1$ must occur in $\jj'$ in such a way that it can be moved to the front via the Coxeter relations of $W$.  Thus $\jj'$ starts like one of $1 \cdots$, $21 \cdots$ or $51 \cdots$, or starts with a string of alternating $2$s and $5$s followed by a $1$.  The latter three cases are impossible, since $s_5$ and $s_2$ do not commute, and $s_5$ and $s_3$ do not commute.  Thus $\jj'$ starts with $1 \cdots$.  Continuing this argument we see that in fact $\jj' = \jj = 1352413524 \cdots$.

More generally, one can deduce the following result: an infinite reduced word $\ii = i_1i_2\cdots$ satisfies $\xi(\ii) \in \partial M$ for some wall $M$ if and only if there exists $N>0$ such that $i_Ni_{N+1}i_{N+2}i_{N+3}\cdots = a(a+2)a(a+2)\cdots$ for some $a\in \Z/5\Z$.
\end{example}

Another case where the topology of the Tits boundary $\bTX$ is known reasonably explicitly is the case where $X$ has {\it isolated flats}, which we discuss in Section \ref{s:isolated}.  Hruska and Kleiner~\cite{hruska-kleiner} have shown that the Tits boundary $\bTX$ of such spaces is a disjoint union of isolated points and standard Euclidean spheres, while Caprace \cite{caprace_isolated} has classified Coxeter groups $W$ where $X$ has isolated flats.  Using Caprace's work, we show that the partition of each Euclidean sphere in $\bTX$ into equivalence classes $C(\xi)$ is the arrangement induced by an affine Coxeter group.

The examples we consider in Section~\ref{s:classes} suggest the following questions: {\it does the Tits boundary of a Coxeter group always have the homotopy type of a disjoint union of a wedge of spheres?} {\it When is the limit weak order (or limit weak order on blocks) a disjoint union of posets $P$ that are graded, or Eulerian, or Cohen--Macaulay, or shellable?}  See for example \cite{Bjorner-Brenti} for a discussion of these poset properties in the Coxeter setting.

Throughout this paper, we make use of the standard theory of Coxeter groups, as found in for instance~\cite{humphreys}, and results about $\CAT(0)$ spaces from \cite{bridson-haefliger}.

\section{Infinite reduced words and the limit weak order}\label{s:infinite reduced words}

This section has brief background on infinite reduced words in Coxeter groups, and the limit weak order.   

Let $(W,S)$ be a Coxeter system of finite rank, where $S = \{s_i \mid i \in I\}$.   Thus $S$ is finite and $W$ has presentation
\[ W = \langle \{ s_i \}_{i \in I} \mid (s_is_j)^{m_{ij}} = 1\rangle \]
where $m_{ii} = 1$ for all $i \in I$, and for all distinct $i, j \in I$, $m_{ij} = m_{ji}$ and $m_{ij} \in \{ 2,3,4,\ldots\} \cup \{\infty \}$.  We shall assume that $W$ is an infinite group. 

An \emph{infinite reduced word} in $W$ is a sequence
\[\ii = i_1 i_2 i_3  \cdots \]
where each $i_k \in I$, and for each $n \geq 1$, the finite initial subword $i_1i_2\cdots i_n$ is a reduced word, or equivalently, the product $s_{i_1} s_{i_2} \cdots s_{i_n}$ is a reduced expression.

Let $\Phi^+$ denote the set of positive roots of $W$.  For each infinite reduced word $\ii = i_1 i_2 i_3  \cdots $, we define an {\it inversion set} $\Inv(\ii) \subset \Phi^+$ by
$$
\Inv(\ii) = \{s_{i_1}s_{i_2} \cdots s_{i_{k-1}} \alpha_{i_k} \mid k = 1,2,\ldots \}
$$
where $\alpha_{i_k}$ denotes the simple root indexed by $i_k$.  The fact that all the roots in $\Inv(\ii)$ are positive is a consequence of the reducedness condition.

There are bijections between the set $\Phi^+$ of positive roots, the set $R$ of reflections $r$ of $W$ and the set $\mathcal{M}$ of walls $M = M(r)$ in the Davis complex of Section \ref{s:preliminaries}.  We shall often identity $\Inv(\ii)$ with a set of reflections, or a set of walls, without further comment.  Thus $\Inv(\ii)$ also denotes the set of walls crossed by $\ii$ in the Davis complex.

Let $\jj$ and $\ii$ be infinite reduced words for $W$.  We shall say that
$\jj$ is a {\it braid limit} of $\ii$ if it can be obtained from $\ii$
by a possibly infinite sequence of braid moves.  More precisely, we
require that one has $\ii = \jj_0, \jj_1, \jj_2, \ldots$ such that
$\lim_{k \to \infty} \jj_k = \jj$ and for each $k$, there exists $\ell$ such that the finite initial subword $(\jj_k)_1(\jj_k)_2(\jj_k)_3\cdots(\jj_k)_\ell$ of $\jj_k$ is a reduced word for the same element of $W$ as the finite initial subword $(\jj_{k+1})_1(\jj_{k+1})_2(\jj_{k+2})_3\cdots(\jj_{k+1})_\ell$ of $\jj_{k+1}$, and we have $(\jj_k)_r = (\jj_{k+1})_r$ for $r > \ell$.  Here, the limit $\lim_{k
\to \infty} \jj_k = \jj$ of words is taken coordinate-wise: $j_r =
\lim_{k \to \infty} (\jj_k)_r$.  We write $\ii \to \jj$ to mean that there
is a braid limit from $\ii$ to $\jj$.  We say that $\ii$ and $\jj$ are (limit) {\it braid equivalent} if $\ii \to \jj$ and $\jj \to \ii$, and write $[\ii]$ for the equivalence class of $\ii$.

It is well known, see for example \cite[Proposition 3.1.3]{Bjorner-Brenti}, that for two Coxeter group elements $v,w \in W$, we have $v \leq w$ in weak order if and only if their inversion sets satisfy $\Inv(v) \subseteq \Inv(v)$.  In an analogous fashion, there are two equivalent ways to define the limit weak order on the set of braid equivalence classes of infinite reduced words.  The relation between the two possible definitions is given by the following result established in \cite[Section 4]{Lam-Pylyavskyy}.  See also \cite{Ito}, which uses different language.

\begin{lemma}\label{l:LP}
Two infinite reduced words $\ii$ and $\jj$ are braid equivalent if and only if $\Inv(\ii) = \Inv(\jj)$.  We have $\Inv(\ii) \subset \Inv(\jj)$ if and only if there is a braid limit from $\jj$ to $\ii$.  
\end{lemma}

Define a partial preorder on infinite reduced words by $\ii \leq \jj$ if and only if $\jj \to \ii$.  Note that by Lemma \ref{l:LP}, we have $\ii \leq \jj$ if and only if $\Inv(\ii) \subset \Inv(\jj)$.  This preorder descends to a partial order on braid equivalence classes of infinite reduced words, called the {\it limit weak order} in \cite{Lam-Pylyavskyy}.  We let $(\limW,\leq)$ denote the set of equivalence classes of infinite reduced words in $W$, equipped with the limit weak order.   

As in \cite{Lam-Pylyavskyy}, we shall divide $\limW$ into {\it blocks}.  As explained in the introduction, this  allows us to study the rather complicated poset $\limW$ by studying two simpler classes of posets.  We say that two infinite reduced words $\ii$ and $\jj$ are {\it in the same block} if $\Inv(\ii)$ and $\Inv(\jj)$ differ by finitely many roots.  We write $B(\ii) \subset \limW$ to denote the set of equivalence classes of infinite reduced words in the same block as $\ii$.  The partial order $\leq$ on $\limW$ induces a partial order on blocks: $B(\ii) \leq B(\jj)$ if there exist $[\ii'] \in B(\ii)$ and $[\jj'] \in B(\jj)$ so that $\ii' \leq \jj'$.  It does not seem easy to give a simple criterion for two infinite reduced words $\ii$ and $\jj$ to lie in the same block, without discussing inversion sets.


\section{Preliminaries on geometric group theory}\label{s:preliminaries}

This section recalls brief background on notions from geometric group theory, from the references \cite{bridson-haefliger} and \cite{davis-book}.  In Section~\ref{s:CAT0} we review geodesics and the $\CAT(0)$ condition.  We then recall the construction and relevant properties of the Davis complex $X$, and discuss the ends of $W$, in Section~\ref{s:davis complex}, and discuss boundaries of $X$, including the Tits boundary $\bTX$, in Section~\ref{s:boundaries}.

\subsection{Geodesics and the $\CAT(0)$ condition}\label{s:CAT0}

We briefly recall definitions concerning geodesics and the $\CAT(0)$ condition, referring the reader to \cite{bridson-haefliger} for details.

Let $(X,d)$ be a metric space.  A \emph{geodesic segment} from $x \in X$ to $y \in X$ is a map $\gamma:[0,l] \to X$ such that $\gamma(0) = x$, $\gamma(l) = y$, and $d(\gamma(t), \gamma(t')) = |t - t'|$ for all $t,t'\in [0,l]$.  Similarly we define \emph{geodesic rays} $\gamma:[0,\infty) \to X$ and \emph{geodesic lines} $\gamma:(-\infty,\infty) \to X$.  The metric space $X$ is said to be {\it geodesic} if every pair of points $x,y \in X$ is connected by a geodesic segment.

Let $X$ be a metric space.  Let $\Delta$ be a geodesic triangle in $X$, and $\overline{\Delta}$ be a Euclidean comparison triangle, that is, a triangle in the Euclidean plane with the same side lengths as $\Delta$.  There is then a bijection $x \mapsto \bar x$ between points of $\Delta$ and points of $\overline{\Delta}$.  We say that $\Delta$ is $\CAT(0)$ if for any $x,y \in \Delta$ we have $d(x,y) \leq d(\bar x,\bar y)$, and that $X$ is $\CAT(0)$ if $X$ is a geodesic metric space all of whose geodesic triangles are $\CAT(0)$.  Similarly one defines $\CAT(1)$ spaces using comparison triangles on the unit sphere in $\R^3$.

Let $\gamma: [0,a] \to X$ and $\gamma':[0,a'] \to X$ be two geodesic segments with the same start point $\gamma(0) = p = \gamma'(0)$.  The {\it (Alexandrov) angle} between $\gamma$ and $\gamma'$ is defined as
$$
\angle_p(\gamma,\gamma'): = \limsup_{t,t' \to 0} \overline{\angle}_p(\gamma(t),\gamma(t'))
$$
where $\overline{\angle}_p(\gamma(t),\gamma(t'))$ denotes the angle at $p$ of a Euclidean triangle which has side lengths equal to the pairwise distances between $\{p,\gamma(t),\gamma(t')\}$.

\subsection{The Davis complex and ends}\label{s:davis complex}

Let $(W,S)$ be a Coxeter system.  In this section we briefly recall the construction and relevant properties of the Davis complex $X$ for $(W,S)$, following~\cite{davis-book}.  We also discuss the ends of $W$ and show that if $W$ is one-ended then $X$ has the geodesic extension property (defined below).

For each $T \subset S$, let $W_T$ be the \emph{special subgroup} of $W$ generated by the elements of $T$.   We are following the terminology of Davis~\cite{davis-book} here; the subgroups $W_T$ are often called \emph{(standard) parabolic subgroups} in the literature.  By convention, $W_\emptyset$ is the trivial group.  We say $T \subset S$ is \emph{spherical} if $W_T$ is finite.  Denote by $\cS$ the set of all spherical subsets of $S$, partially ordered by inclusion.  The poset $\cS_{> \emptyset}$ is an abstract simplicial complex, denoted $L$ and called the \emph{nerve} of $(W,S)$.  Thus the vertex set of $L$ is $S$, and a nonempty set $T$ of vertices spans a simplex $\s_T$ in $L$ if and only if $T$ is spherical.

We denote by $K$ the geometric realisation of the poset $\cS$.  Equivalently, $K$ is the cone on the barycentric subdivision of the nerve $L$.  Note that $K$ is compact, since it is the cone on a finite simplicial complex.  We call the cone point of $K$ its \emph{centre}.  For each $s \in S$ let $K_s$ be the union of the (closed) simplices in $K$ which contain the vertex $s$ but do not contain the centre.  In other words, $K_s$ is the closed star of the vertex $s$ in the barycentric subdivision of $L$.  Note that $K_s$ has nonempty intersection with $K_t$ if and only if $s$ and $t$ generate a finite subgroup of $W$.  For each $x \in K$, let \[S(x):= \{ s \in S \mid x \in K_s \}.\]

Now define an equivalence relation $\sim$ on the set $W \times K$ by $(w,x) \sim
(w',x')$ if and only if $x = x'$ and $w^{-1}w' \in W_{S(x)}$.  The \emph{Davis complex} $X = X(W,S)$ for $(W,S)$ is then the quotient space \[ X := (W \times K) / \sim. \]  The natural $W$--action on $W \times K$ descends to an action on $X$.

We identify $K$ with the subcomplex $(e,K)$ of $X$, where $e$ is the identity element of $W$.  Then $K$, as well as any of its translates by an element of $W$, is called a \emph{chamber} of $X$.  It is immediate that each chamber is compact, the set of chambers is in bijection with the set of elements of $W$ and $W$ acts transitively on the set of chambers.  We denote by $0$ the centre of the chamber $K = (e,K)$ in $X$.

If $(W,S)$ is irreducible affine then the Davis complex $X = X(W,S)$ is just the barycentric subdivision of the Coxeter complex for $(W,S)$.

The Davis complex $X$ may be equipped with a piecewise Euclidean metric so that it is a proper, complete $\CAT(0)$ metric space.  Then in particular, given any two points $x, y \in X$, there is a unique geodesic segment from $x$ to $y$, denoted $[x,y]$.  Since $X$ is a complete $\CAT(0)$ metric space, by Lemma II.5.8(2) of \cite{bridson-haefliger} we may define $X$ to have the \emph{geodesic extension property} if every geodesic segment can be extended to a geodesic line.  

In the special case that $W$ is generated by the set of reflections in codimension one faces of a compact convex hyperbolic polytope $P$, the Davis complex $X$ may instead be equipped with a piecewise hyperbolic metric, so that $X$ is isometric to the induced tessellation of hyperbolic space by copies of $P$.  This natural metrisation of $X$ is used in Example \ref{eg:2d} above.  Although we work with a piecewise Euclidean metric on $X$, our results hold for this piecewise hyperbolic metric as well.

By construction, the Coxeter group $W$ acts properly discontinuously and cocompactly by isometries on $X$.  Hence $W$ with its word metric is quasi-isometric to $X$.

Recall that for a finitely generated group $G$, the \emph{ends} of $G$ counts the number of path components of the Cayley graph of $G$ as larger and larger finite subgraphs are removed.  By a result of Hopf, a finitely generated group $G$ has $0$, $1$, $2$ or infinitely many ends.  The first statement in the following theorem is due to Hopf, while the latter three statements are due to Davis and are Theorems 8.7.2, 8.7.3 and 8.7.4 of~\cite{davis-book}, respectively. 

\begin{theorem}\label{T:davisends}
Let $W$ be a Coxeter group.
\begin{enumerate}
\item $W$ has $0$ ends if and only if $W$ is spherical (that is, finite).
\item
$W$ is one-ended if and only if, for each $T \in \cS$, the
punctured nerve $L - \s_T$ is connected.
\item
$W$ is two-ended if and only if $(W, S)$ decomposes as the direct product
$(W, S) = (W_0 \times W_1, S_0 \cup S_1)$
where $W_1$ is finite and $W_0$ is the infinite dihedral group.
\item
$W$ has infinitely many ends if and only if it is infinite, not as in (3) and there is at least one $T \in \cS$ so that the punctured nerve
$L - \s_T$ is disconnected.
\end{enumerate}
\end{theorem}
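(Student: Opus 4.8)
The plan is to replace $W$ by the Davis complex $X$, on which $W$ acts properly discontinuously and cocompactly by isometries, so that $W$ and $X$ are quasi-isometric and in particular have the same number of ends $e(W)=e(X)$; recall also that by Hopf this number lies in $\{0,1,2,\infty\}$. Part (1) is then immediate: $X$ has $0$ ends precisely when it is compact, and a proper cocompact $W$-space is compact exactly when $W$ is finite, i.e. spherical. It remains to separate $1$, $2$ and $\infty$ for $W$ infinite, and here I would work with $X$ and $\mathbb{F}_2$-coefficients throughout.

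The main tool is Davis's computation of the compactly supported cohomology of $X$. Filtering $X$ by finite unions of chambers and using that each chamber and each mirrored subspace is contractible, one obtains a direct sum decomposition
\[
H^n_c(X;\mathbb{F}_2)\;\cong\;\bigoplus_{w\in W}\tilde H^{\,n-1}\bigl(L-\s_{\operatorname{In}(w)};\mathbb{F}_2\bigr),
\]
where $\operatorname{In}(w)=\{s\in S:\ell(ws)<\ell(w)\}$ is the (necessarily spherical) set of right descents of $w$ and $\s_{\operatorname{In}(w)}$ is the corresponding simplex of the nerve $L$. Since $X$ is $\CAT(0)$, hence contractible, and noncompact, the long exact sequence relating $H^\ast_c$, $H^\ast$ and the cohomology of the space of ends collapses to give $e(X)=1+\dim_{\mathbb{F}_2}H^1_c(X;\mathbb{F}_2)$. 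Combining these,
\[
e(W)\;=\;1+\sum_{w\in W}\dim_{\mathbb{F}_2}\tilde H^0\bigl(L-\s_{\operatorname{In}(w)};\mathbb{F}_2\bigr)\;=\;1+\sum_{w\in W}\bigl(c(w)-1\bigr),
\]
where $c(w)$ is the number of connected components of $L-\s_{\operatorname{In}(w)}$ (well defined and nonempty, since $W$ infinite forces $\operatorname{In}(w)\subsetneq S$). Part (2) now drops out: every descent set is spherical, and conversely each $T\in\cS$ equals $\operatorname{In}(w_T)$ for the longest element $w_T$ of $W_T$, so $\{\operatorname{In}(w):w\in W\}=\cS$; hence all summands vanish, i.e. $e(W)=1$, if and only if $L-\s_T$ is connected for every $T\in\cS$.

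For parts (3) and (4) I would first dispose of reducibility. Writing $(W,S)=\prod_i(W_i,S_i)$ as a product of its irreducible factors gives $X=\prod_i X_i$, and a product of two unbounded proper geodesic spaces is one-ended; therefore $e(W)\geq 2$ forces exactly one infinite irreducible factor $W_0$, with complementary factor $W_1$ finite, so that $X=X_0\times(\text{compact})$ and $e(W)=e(X_0)$. It then suffices to note that an irreducible infinite Coxeter group is two-ended if and only if it is $D_\infty$: if $|S_0|=2$ this is the statement that the group is infinite dihedral, while if $|S_0|\geq 3$ the group contains a copy of $\Z^2$ or of a rank-two free group and so is not virtually cyclic. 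This gives (3), the converse being that $W=D_\infty\times W_1$ has $X\simeq\R\times(\text{compact})$, with two ends. Finally (4) follows by elimination: a $W$ that is infinite, not as in (3), and with some $L-\s_T$ disconnected is, by (1)--(3) and Hopf's trichotomy, neither $0$-, $1$- nor $2$-ended, hence infinitely-ended, and the converse is the contrapositive of (2) together with (3).

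The step I expect to be the main obstacle is the decomposition of $H^\ast_c(X)$ and the bookkeeping identifying its degree-one part with $\bigoplus_{w}\tilde H^0(L-\s_{\operatorname{In}(w)})$: getting the descent-set indexing and the reduced-cohomology conventions exactly right (including the empty and disconnected edge cases) is the delicate part, and it is precisely what forces the hypotheses of (2) and (4) to be phrased via the punctured nerves $L-\s_T$ over spherical $T$. The remaining inputs — quasi-isometry invariance of the number of ends, the product formula for ends, and the classification of irreducible two-ended Coxeter groups — are standard.
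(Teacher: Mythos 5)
The paper itself gives no proof of this theorem: it is imported as a known result, cited from \cite[Section~8]{davis-book}, so the only meaningful comparison is with that source. Your proposal is correct and is essentially Davis's own argument from the cited chapter: the decomposition $H^n_c(X;\mathbb{F}_2)\cong\bigoplus_{w\in W}\tilde H^{n-1}(L-\s_{\operatorname{In}(w)};\mathbb{F}_2)$ that you quote is the central theorem of that chapter (so the real technical work is deferred to it, as you yourself flag), while the remaining steps you supply --- the identity $e(W)=1+\dim_{\mathbb{F}_2}H^1_c(X;\mathbb{F}_2)$ for a contractible noncompact $X$, the observation that $\{\operatorname{In}(w):w\in W\}=\cS$ via longest elements of spherical special subgroups, and the reduction of parts (3) and (4) to a single infinite irreducible factor together with the classification of two-ended (virtually cyclic) groups --- are exactly how the ends statement is extracted there, and the standard facts you invoke (quasi-isometry invariance of ends, one-endedness of products of unbounded proper geodesic spaces, the empty punctured nerve $L-\s_S$ handling the finite case) are all correct.
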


The following result will be needed only in Sections~\ref{s:geometry Mxi} and~\ref{s:hyperbolic}, however we include it here since its proof uses details of the construction of the Davis complex.

\begin{proposition}\label{p:geodesic extension}  If $W$ is one-ended then $X$ has the geodesic extension property.
\end{proposition}

\begin{proof}  By \cite[Proposition II.5.10]{bridson-haefliger}, it suffices to show that $X$ has no free faces.  Let $K$ be the standard chamber of $X$ and $L$ the nerve of $W$.  Then by construction of the Davis complex, $X$ has no free faces if and only if every free face of $K$ is contained in a mirror of $K$.  Since $K$ is the cone on the barycentric subdivision of $L$, there is a free face of $K$ not contained in a mirror if and only if there is an $s \in S$ such that the vertex $\{ s \}$ is contained in a unique maximal simplex of $L$ of dimension $\geq 1$.  (Such a free face will contain the edge between $\{ s \}$ and the cone point of $K$.)

Now suppose $s \in S$ is contained in a unique maximal simplex.  Then the link of $s$ in $L$ is this maximal simplex with $s$ removed, which is itself a simplex $\s_T \in L$.
Thus $L$ is either a simplex (in which case $W$ has $0$ ends), or $L-\s_T$ is
disconnected. Then by Theorem \ref{T:davisends}, $W$
cannot be one-ended.
\end{proof}

\subsection{Visual boundary and Tits boundary of the Davis complex}\label{s:boundaries}

We briefly recall definitions and some results concerning these boundaries.  See \cite{bridson-haefliger} for details.  

We denote by $\bX$ the visual boundary of the Davis complex $X$.  That is, $\bX$ is the set of equivalence classes of geodesic rays in $X$, where two rays $c,c':[0,\infty) \to X$ are defined to be \emph{equivalent} if there is a $0 < K < \infty$ so that for all $t \geq 0$, $d(c(t),c'(t)) < K$.  We say that a point $\xi \in \bX$ is \emph{represented} by a geodesic ray $c$ if $c$ is in the equivalence class $\xi$, and we may then write $c(\infty) = \xi$.  Since $X$ is complete and $\CAT(0)$, for any $\xi \in \bX$ and $p \in X$, there exists a unique geodesic ray from $p$ to $\xi$, denoted $[p,\xi)$.

We denote by $\bTX$ the Tits boundary of the Davis complex $X$, that is, the visual boundary $\partial X$ equipped with the length metric $d_T$ induced by the angular metric $\angle$.  By definition, the distance between points $\xi,\xi'\in\bX$ in the angular metric is $\angle(\xi,\xi') = \sup_{p \in X}\angle_p(\xi,\xi')$, where $\angle_p(\xi,\xi')$ is the (Alexandrov) angle at $p$ between the geodesic rays $[p,\xi)$ and $[p,\xi')$.

\begin{lemma}\label{l:dichotomy} Let $\xi$ and $\xi'$ be distinct points in $\bTX$.  Then at least one of the following holds:
\begin{enumerate}
\item $\xi$ and $\xi'$ are connected by a geodesic arc in $\bTX$;
\item $\xi$ and $\xi'$ are connected by a geodesic in $X$.
\end{enumerate}
Moreover, if $\xi$ and $\xi'$ are not connected by a geodesic in $X$ and $d_T(\xi,\xi') < \pi$, then there is a unique geodesic arc in $\bTX$ which connects $\xi$ and $\xi'$.
\end{lemma}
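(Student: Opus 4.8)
The plan is to read the entire dichotomy off the single number $d_T(\xi,\xi')$, using that $\bTX$ is a complete $\CAT(1)$ space together with the standard criterion for when two boundary points bound an honest geodesic line in $X$. I would prove three statements: if $d_T(\xi,\xi')<\pi$ then (1) holds with a \emph{unique} arc; if $d_T(\xi,\xi')=\pi$ then (1) holds; and if $d_T(\xi,\xi')>\pi$ then (2) holds. Since these three ranges exhaust all possibilities, at least one of (1), (2) always holds, and the two conclusions can overlap only in the borderline case $d_T(\xi,\xi')=\pi$.

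For $d_T(\xi,\xi')<\pi$, recall that since $X$ is complete and $\CAT(0)$, the Tits boundary $\bTX$ is a complete $\CAT(1)$ space whose metric $d_T$ is by construction a length metric \cite{bridson-haefliger}. In a complete $\CAT(1)$ space any two points at distance strictly less than $\pi$ are joined by a unique geodesic, so we obtain a unique geodesic arc in $\bTX$ joining $\xi$ and $\xi'$; this gives conclusion (1) and simultaneously the uniqueness asserted in the ``moreover''. Note that in this range there is automatically no geodesic line in $X$ joining $\xi$ and $\xi'$, because the existence of such a line would force $\angle_p(\xi,\xi')=\pi$ at each of its points and hence $d_T(\xi,\xi')\ge\pi$; thus the extra hypothesis appearing in the ``moreover'' is consistent and in fact redundant in this range.

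For $d_T(\xi,\xi')>\pi$ I would use the link between geodesic lines in $X$ and the angle at infinity: there is a geodesic line $\gamma$ with endpoints $\xi,\xi'$ if and only if there is a point $p\in X$ with $\angle_p(\xi,\xi')=\pi$, in which case one may take $\gamma$ to be the concatenation of the rays $[p,\xi)$ and $[p,\xi')$, a path which is locally geodesic and hence, by the $\CAT(0)$ condition, a genuine geodesic line. A standard result on complete $\CAT(0)$ spaces \cite{bridson-haefliger} guarantees that when $d_T(\xi,\xi')>\pi$ such a point $p$ exists, producing the geodesic line and hence conclusion (2).

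The step I expect to be the main obstacle is the borderline value $d_T(\xi,\xi')=\pi$, where neither clean argument applies directly: the $\CAT(1)$ convexity argument that produces geodesics at distance $<\pi$ degenerates at $\pi$ (already the antipodes of a round sphere show that geodesics of length $\pi$ exist but need not be unique), and the angle $\pi$ need not be attained at any $p\in X$, so there need not be a geodesic line. I would resolve this by a limiting argument inside the complete $\CAT(1)$ length space $\bTX$: choose rectifiable paths from $\xi$ to $\xi'$ of length tending to $\pi=d_T(\xi,\xi')$, subdivide each into finitely many subarcs of length $<\pi$, straighten these subarcs using $\CAT(1)$ comparison, and extract, using completeness, a limiting path of length $\pi$ which is a geodesic arc in $\bTX$. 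This yields conclusion (1) at $d_T(\xi,\xi')=\pi$, while (2) may or may not also hold there, in agreement with the statement.
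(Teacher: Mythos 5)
Your cases $d_T(\xi,\xi')<\pi$ and $d_T(\xi,\xi')>\pi$ are essentially sound (though for the latter you should invoke \emph{properness} of $X$, not mere completeness: the result that $d_T>\pi$ forces a geodesic line in $X$ is part of Proposition II.9.21 of \cite{bridson-haefliger}, which is stated for proper $\CAT(0)$ spaces, and the Davis complex is proper). The genuine gap is exactly where you predicted it: the borderline case $d_T(\xi,\xi')=\pi$, and your proposed resolution cannot be repaired as stated, because the statement you try to prove there is \emph{false} for abstract complete $\CAT(1)$ length spaces. Take the metric graph with two vertices $x,y$ joined by countably many edges, the $n$-th of length $\pi+1/n$, equipped with the path metric: this is a complete $\CAT(1)$ length space (every embedded circle has length $>2\pi$, so all geodesic triangles of perimeter $<2\pi$ are trees), $d(x,y)=\pi$, yet no path from $x$ to $y$ has length $\pi$. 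Concretely, two things break in your limiting argument: $\CAT(1)$ comparison gives no Cauchy control on midpoints once the relevant configurations reach perimeter $2\pi$ (on a round sphere, two semicircles joining antipodal points have midpoints at distance $\pi$ from each other), and completeness is not compactness --- $\bTX$ is in general very far from proper (for word hyperbolic $W$ it is an uncountable discrete space), so there is no Arzel\`a--Ascoli extraction of a limit path. Thus no argument carried out purely inside $\bTX$ can settle the case $d_T=\pi$.

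The fix is to use the interplay with the proper $\CAT(0)$ space $X$, which is precisely what the paper does, and for all cases at once: Proposition II.9.21(1) and (2) of \cite{bridson-haefliger} states that if $\xi$ and $\xi'$ are \emph{not} joined by a geodesic line in $X$, then $d_T(\xi,\xi')=\angle(\xi,\xi')\leq\pi$ and this distance is realized by a geodesic arc in $\bTX$; the proof of that proposition works with geodesics in $X$ and uses properness of $X$, not intrinsic properties of $\bTX$. This yields the dichotomy outright (no trichotomy on $d_T$ is needed), and uniqueness for $d_T<\pi$ then follows, as in your first case, from $\bTX$ being a complete $\CAT(1)$ space (Theorem II.9.20 of \cite{bridson-haefliger}). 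So either restructure along the paper's lines, or at minimum replace your $d_T=\pi$ argument by: if no geodesic line in $X$ joins $\xi$ and $\xi'$, apply II.9.21 to obtain the arc in $\bTX$.
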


\begin{proof} Since $X$ is a proper $\CAT(0)$ space, Proposition II.9.21(1) and (2) of \cite{bridson-haefliger} say that if $\xi$ and $\xi'$ are not connected by a geodesic in $X$, they are connected by a geodesic arc in the boundary $\bTX$, of length $d_T(\xi,\xi') = \angle(\xi,\xi') \leq \pi$.  By Theorem 9.20 of \cite{bridson-haefliger}, since $X$ is a complete $\CAT(0)$ space the boundary $\bTX$ is a complete $\CAT(1)$ space, so $d_T(\xi,\xi') < \pi$ implies that this geodesic arc is unique.   \end{proof}

We note that it is possible for both (1) and (2) in Lemma \ref{l:dichotomy} to occur.  For example if $W$ is irreducible affine of rank $n$, then $\bTX$ is the $(n-1)$-dimensional sphere with its usual metric, and every pair of antipodal points in $\bTX$ will be connected by infinitely many geodesic arcs of length $\pi$ in $\bTX$ as well as by infinitely many (parallel) geodesics in $X$.

\section{The limit weak order and the topology of the Tits boundary}\label{s:main proof}

In this section we define the terms in and then prove our first main result, Theorem~\ref{thm:main}, which is stated in the introduction.  In Section~\ref{s:walls} we use boundaries of walls to define the equivalence class $C(\xi)$ of a point $\xi \in \bTX$, and show that the induced arrangement in $\bTX$ has properties similar to a hyperplane arrangement.  Then in Section~\ref{s:bTXi} we define the subsets $\bTXi \subset \bTX$ and use results from Section~\ref{s:walls} to prove parts (1) and (2) of Theorem~\ref{thm:main}.  In particular, the sets $C(\xi)$ and $\bTXi$ induce the same partition of $\bTX$.  We then investigate the metric and topological properties of this partition, establishing parts (3) and (4) of Theorem~\ref{thm:main} in Sections~\ref{s:proof 3} and~\ref{s:proof 4}, respectively.

\subsection{The equivalence class $C(\xi)$}\label{s:walls}

In this section we use boundaries of walls to define an equivalence relation $\sim$ on $\bTX$, with $C(\xi)$ denoting the equivalence class of  $\xi \in \bTX$, and we establish some properties of this partition.  The separation properties of walls and their boundaries will be crucial in this work, and so we include a careful discussion of these topics before defining the relation $\sim$ in Definition~\ref{d:Cxi}.  We then define various inversion sets and relate them to $\sim$ (see in particular Proposition~\ref{p:infinite}).  We will use many results from this section in proving our main results. 

A \emph{wall} in $X$ is by definition the fixed set of a reflection $r \in W$.  Each wall $M = \Fix(r)$ is a closed, convex subcomplex of $X$, which determines two closed half-spaces $M^+$ and $M^-$ of $X$ such that $M = M^+ \cap M^-$ and the reflection $r$ interchanges $M^+$ and $M^-$.  The half-space $M^+$ is by definition that containing the point $0$.  The collection of walls in $X$ is locally finite, and the maximum number of walls intersecting at any point of $X$ is bounded by the maximum size of a subset $T \subset S$ for which $W_T$ is finite.

We will say that two points $x, y \in X$ are on \emph{opposite sides} of a wall $M$ if $x \in M^+ \setminus M$ and $y \in M^- \setminus M$, or vice versa.  We then say that a wall $M$ \emph{separates} two points $x, y \in X$ if $x$ and $y$ are on opposite sides of $M$, so in particular neither $x$ nor $y$ is in $M$.  By the local structure of walls, if $M$ separates $x$ and $y$ then the geodesic segment $[x,y]$ intersects $M$ in a single point.  Given $x, y \in X$, we denote by $\Mxy$ the set of walls which separate $x$ and $y$.

We now consider the boundaries of walls.  We define $\cM$ to be the closure of $M$ in $\overline{X} = X \cup \bX$, and define $\bM = \cM \cap \bX = \cM \setminus M$.  Similarly we define $\cM^+,\bM^+$ and $\cM^-,\bM^-$.

\begin{lemma}\label{l:halfspace}  Let $M$ be a wall in $X$.  
\begin{enumerate}
\item The sets $\bM$ and $\bM^\pm$ are closed in $\bTX$ and $\bM^\pm \setminus \bM$ is open in $\bTX$.
\item 
The intersection of $\bM^+$ and $\bM^-$ is $\bM$.
\end{enumerate}
\end{lemma}
\begin{proof}
For (1), by \cite[Proposition 9.7(1)]{bridson-haefliger}, it is enough to establish the claims for $\bX$.  But then the claims follow from the definitions.

For (2), the inclusion $\bM \subset \bM^+ \cap \bM^-$ is clear.  Now suppose $\xi \in \bM^+ \cap \bM^-$.  Since $M^+$ and $M^-$ are totally geodesic sets, $\xi$ can be represented by geodesic rays $c, c'$ where $c$ (respectively $c'$) stays completely inside $M^+$ (respectively $M^-$).  Since $c$ and $c'$ are equivalent we have $d(c(t),c'(t)) < K < \infty$ so in particular both $c$ and $c'$ are a bounded distance from $M$.  It follows that $\xi \in \bM$.
\end{proof}
 
We say that two points $\xi,\xi' \in \bX$ are on \emph{opposite sides of $\bM$} if $\xi \in \bM^+ \setminus \bM$ and $\xi' \in \bM^- \setminus \bM$, or vice versa.

\begin{lemma}\label{l:crossing boundaries}  Let $M$ be a wall.  Suppose $\xi, \xi' \in \bX \setminus \bM$ are on opposite sides of $\bM$.  If we are in the situation of Lemma \ref{l:dichotomy}(1), then the geodesic arc in $\bTX$ connecting $\xi$ and $\xi'$ crosses $\bM$.  If we are in the situation of Lemma \ref{l:dichotomy}(2), then the geodesic in $X$ connecting $\xi$ and $\xi'$ crosses $M$.
\end{lemma}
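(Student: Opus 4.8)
The plan is to treat the two parts in turn, exploiting the convexity of the half-spaces $M^+$ and $M^-$ together with the description of convergence in the cone topology on $\overline{X} = X \cup \bX$. For part (1), one inclusion is immediate: since $M \subseteq M^\pm$ we have $\bM \subseteq \bM^+$ and $\bM \subseteq \bM^-$, so $\bM \subseteq \bM^+ \cap \bM^-$. For the reverse inclusion, the key observation is that if $p \in M^+$ and $\xi \in \bM^+$ then the entire ray $[p,\xi)$ lies in $M^+$: choosing $x_n \in M^+$ with $x_n \to \xi$, convexity of $M^+$ gives $[p,x_n] \subseteq M^+$, and by the characterization of the cone topology in \cite{bridson-haefliger} the segments $[p,x_n]$ converge uniformly on compacta to $[p,\xi)$, so the limit ray lies in the closed set $M^+$. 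Now fix $p \in M$, which is nonempty as the fixed set of the reflection $r$, so that $p \in M^+ \cap M^-$. Given $\xi \in \bM^+ \cap \bM^-$, applying the observation on each side gives $[p,\xi) \subseteq M^+$ and $[p,\xi) \subseteq M^-$, whence $[p,\xi) \subseteq M^+ \cap M^- = M$. Thus $\xi$ is represented by a ray in $M$, i.e. $\xi \in \bM$, proving $\bM^+ \cap \bM^- = \bM$.

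Before part (2) I would record the complementary fact that $\bM^+ \cup \bM^- = \bX$: any ray $c = [0,\xi)$ emanating from the centre $0 \in M^+$ meets $M^+$ and $M^-$ in subintervals $c^{-1}(M^+)$ and $c^{-1}(M^-)$ of $[0,\infty)$ that cover it, so $c$ either remains in $M^+$ (giving $\xi \in \bM^+$) or has a tail inside $M^-$ (giving $\xi \in \bM^-$). Combined with part (1), this yields a disjoint decomposition $\bX = (\bM^+ \setminus \bM) \sqcup \bM \sqcup (\bM^- \setminus \bM)$, in which the two outer pieces are open and $\bM$ is closed in $\bTX$ by Lemma \ref{l:halfspace}. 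For part (2), assume without loss of generality $\xi \in \bM^+ \setminus \bM$ and $\xi' \in \bM^- \setminus \bM$. In the situation of Lemma \ref{l:dichotomy}(1), let $\sigma : [0,\ell] \to \bTX$ be the connecting geodesic arc, with $\sigma(0) = \xi$ and $\sigma(\ell) = \xi'$. Since $\sigma$ is continuous into $\bTX$ and the sets $\bM^+ \setminus \bM$, $\bM^- \setminus \bM$ are open and disjoint, their preimages are disjoint open subsets of $[0,\ell]$ containing $0$ and $\ell$ respectively; by connectedness of $[0,\ell]$ they cannot cover it, so some $\sigma(t_0)$ lies in the complement, which by the decomposition above is exactly $\bM$. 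Hence the arc crosses $\bM$.

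In the situation of Lemma \ref{l:dichotomy}(2), let $\gamma : \R \to X$ be the connecting geodesic line, so $\{\gamma(-\infty),\gamma(+\infty)\} = \{\xi,\xi'\}$. As intersections of the convex sets $M^\pm$ with the geodesic $\gamma(\R)$, the sets $\gamma^{-1}(M^+)$ and $\gamma^{-1}(M^-)$ are subintervals of $\R$ whose union is $\R$. If $\gamma$ lay entirely in $M^+$ then both endpoints would lie in $\bM^+$, forcing $\xi' \in \bM^+ \cap \bM^- = \bM$ by part (1), contradicting $\xi' \notin \bM$; symmetrically $\gamma$ cannot lie in $M^-$. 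Hence neither subinterval is all of $\R$, so they are complementary half-lines overlapping in $\gamma^{-1}(M)$, and $\gamma$ passes from $M^+ \setminus M$ through $M$ to $M^- \setminus M$. Thus the geodesic crosses $M$.

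The main obstacle is the geometric input underlying part (1): the fact that a boundary point of the convex half-space $M^+$, viewed from a basepoint in $M^+$, is reached by a ray staying inside $M^+$. This is exactly where convexity must be combined correctly with the cone topology, and it is the one place where I expect to invoke \cite{bridson-haefliger} in an essential rather than formal way. Once it is in place, part (2) follows from part (1), Lemma \ref{l:halfspace}, and elementary connectedness and interval arguments.
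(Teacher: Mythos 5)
Your proof is correct and follows essentially the same route as the paper: part (1) rests on the fact that a point of $\bM^{\pm}$ is represented by a geodesic ray lying inside the closed convex set $M^{\pm}$, and part (2) is the same connectedness argument, using part (1), Lemma \ref{l:halfspace}, and the covering $\bX = \bM^+ \cup \bM^-$ (which the paper uses implicitly and you verify explicitly). The only real difference is in the execution of part (1): you base the ray at a single point $p \in M$ and use uniqueness of rays to get $[p,\xi) \subseteq M^+ \cap M^- = M$ directly, whereas the paper takes two asymptotic rays, one inside each half-space, and concludes from their bounded distance to $M$; your variant is slightly more self-contained but not a different method.
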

\begin{proof}
In the case of Lemma \ref{l:dichotomy}(1), let $\gamma \subset \bTX$ be the geodesic arc.  Then $\gamma \cap \bM^+$ and $\gamma \cap \bM^-$ are both closed subsets of $\gamma$, and must thus intersect.  Hence $\gamma \cap \bM^- \cap \bM^+ \neq \emptyset$.  In the case of Lemma \ref{l:dichotomy}(2), if the geodesic $\gamma \subset X$ does not intersect $M$, then it must stay completely inside either $M^+$ or $M^-$.  But then $\xi,\xi'$ will both lie in $\bM^+$ or $\bM^-$, contradicting the assumption.
\end{proof}

If $\xi, \xi' \in \bX \setminus \bM$ are on opposite sides of $\bM$, then using Lemma \ref{l:crossing boundaries}, we will abuse terminology and say that $\xi$ and $\xi'$ are on \emph{opposite sides of $M$} and are \emph{separated by $M$}.  Two points $\xi, \xi' \in \bX \setminus \bM$ are defined to be on the \emph{same side of $M$} if they are not on opposite sides.  

We are now ready to define the relation $\sim$, which partitions the boundary $\bTX$ into equivalence classes $C(\xi)$.

\begin{definition}[The set $C(\xi)$]\label{d:Cxi}
Define an equivalence relation on $\bTX$ by $\xi \sim \xi'$ if and only if for each wall $M$, $\xi \in \bM^\pm \Leftrightarrow \xi' \in \bM^\pm$. Let $C(\xi) \subset \bTX$ denote the equivalence class of $\xi$.
\end{definition}

\noindent That is, $\xi \sim \xi'$ if and only if, for every wall $M$ such that $\xi, \xi' \in \bX \setminus \bM$, $\xi$ and $\xi'$ are on the same side of $M$.

To explore the properties of this partition, we first consider geodesic rays from various points in the Davis complex to various points in its boundary.  The following lemma will be used repeatedly.

\begin{lemma}\label{l:rays and walls} Let $M$ be a wall.
\begin{enumerate}
\item The geodesic ray from any $p \in M$ to $\xi \in \bM$ must be contained completely in $M$.
\item Let $c$ be a geodesic ray which intersects $M$ but is not contained in $M$.  Then $c(\infty)$ is not in $\bM$.
\item Let $\xi \in \bM$ and $p \in X \setminus M$.  Then the geodesic ray $[p,\xi)$ never intersects $M$.
\end{enumerate}
\end{lemma}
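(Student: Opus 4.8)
The plan is to prove the three statements in the order (1), (2), (3), obtaining (2) from (1) by a reflection argument and deducing (3) immediately from (2). For (1), I would start from the description of the boundary: since $\xi \in \bM = \cM \cap \bX$, there is a sequence of points $x_n \in M$ converging to $\xi$ in the cone topology on $\overline{X} = X \cup \bX$. Fixing $p \in M$ and using that $M$ is a closed convex subcomplex of $X$, each geodesic segment $[p,x_n]$ lies in $M$. The key input is the standard fact for proper $\CAT(0)$ spaces that the segments $[p,x_n]$ converge, uniformly on compact sets, to the ray $[p,\xi)$ (see \cite[Ch.~II.8]{bridson-haefliger}). Since $M$ is closed, the limiting ray $[p,\xi)$ stays in $M$, which is exactly the assertion of (1).

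For (2), I would argue by contradiction, assuming $c(\infty) = \xi \in \bM$. The set $\{t \geq 0 \mid c(t) \in M\}$ is nonempty (as $c$ meets $M$) and closed, so it has a least element $t_0$, with $c(t) \notin M$ for $t < t_0$. The sub-ray $c|_{[t_0,\infty)}$ is a geodesic ray issuing from $c(t_0) \in M$ and asymptotic to $\xi$, hence it is the unique ray $[c(t_0),\xi)$, which by part (1) lies in $M$. If $t_0 = 0$ this already says $c \subset M$, contradicting the hypothesis; so $t_0 > 0$ and $c(0) \notin M$. Now let $r$ be the reflection with $M = \Fix(r)$. Since $r$ fixes $M$ pointwise, the geodesic ray $r \circ c$ coincides with $c$ on $[t_0,\infty)$, whereas $r(c(0)) \neq c(0)$ because $c(0) \notin M = \Fix(r)$. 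Thus $c$ and $r \circ c$ are two distinct geodesic rays agreeing along a sub-ray of positive length, which contradicts the non-branching of geodesics in a $\CAT(0)$ space (a consequence of unique geodesics, \cite{bridson-haefliger}). Hence $\xi \notin \bM$.

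Finally, (3) is immediate from (2): if the ray $[p,\xi)$ were to meet $M$, then since $p \notin M$ it would be a geodesic ray that intersects $M$ but is not contained in $M$, so by (2) its endpoint would not lie in $\bM$, contradicting $\xi \in \bM$. I expect the only real obstacle to lie in part (2): namely, correctly extracting from (1) that the \emph{entire} tail $c|_{[t_0,\infty)}$ lands in $M$ (rather than just the single crossing point), and then deploying the reflection $r$ together with non-branching in a clean way. Parts (1) and (3) are then routine, resting respectively on convexity of $M$ with the convergence of geodesics, and on a direct appeal to (2).
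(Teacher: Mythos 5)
Your part (1) is correct, though it takes a different route from the paper: the paper simply observes that if $[p,\xi)$ were not contained in $M$, then its image under the reflection $r$ fixing $M$ would be a second geodesic ray from $p$ to $\xi$ (as $r$ fixes both $p$ and $\xi$), contradicting uniqueness of rays. Your limiting argument via the segments $[p,x_n]$ is also valid, resting on convexity and closedness of $M$ plus the standard description of the cone topology. Part (3) is deduced from (2) exactly as in the paper.

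The genuine gap is in part (2), at the final step. You assert that two distinct geodesic rays agreeing along a sub-ray contradict ``the non-branching of geodesics in a $\CAT(0)$ space (a consequence of unique geodesics).'' This is false: unique geodesics do not imply non-branching, and $\CAT(0)$ spaces do branch --- any metric tree is $\CAT(0)$, and two geodesics through a vertex of valence at least $3$ can share a segment and then diverge. Worse, the conclusion of (2) genuinely cannot be derived from the abstract properties you use ($X$ proper $\CAT(0)$, $M=\Fix(r)$ closed and convex, uniqueness of rays, part (1)). Concretely, let $X$ be the tree consisting of two edges $L_1,L_2$ of length $1$ and an infinite ray $L_3$, all glued at a vertex $v$, and let $r$ be the isometric involution swapping $L_1$ and $L_2$ and fixing $L_3$. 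Then $M=\Fix(r)=\{v\}\cup L_3$ is closed and convex, and the geodesic ray $c$ from the tip of $L_1$ through $v$ and out along $L_3$ intersects $M$, is not contained in $M$, yet $c(\infty)\in\bM$; here $c$ and $r\circ c$ agree on a tail and differ at $0$, and nothing goes wrong. What is missing is an input specific to walls of the Davis complex, which is precisely what the paper invokes: ``by local considerations there is a unique point of intersection of $c$ with $M$,'' i.e.\ a wall is a locally two-sided, hypersurface-like subcomplex, so a geodesic not contained in $M$ meets $M$ in at most one point. Granting this, (2) follows at once from the part of your argument that is correct: the tail $c|_{[t_0,\infty)}=[c(t_0),\xi)$ lies in $M$ by (1), which contradicts the fact that $c$ meets $M$ only at $c(t_0)$ --- no reflection and no non-branching principle is needed.
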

\begin{proof}  For (1), if $[p,\xi)$ is not contained in $M$ then reflecting this ray in $M$ gives another geodesic ray from $p$ to $\xi$, contradicting the uniqueness of the geodesic ray.  For (2), by local considerations there is a unique point of intersection of $c$ with $M$, at say $c(t_0) = p_0$.  The unique geodesic ray from $p_0$ to $\xi = c(\infty)$ is then given by $c(t)$ for $t \geq t_0$.  If $\xi \in \bM$ this contradicts (1).  For (3), if $[p,\xi)$ intersects $M$ this contradicts (2).  \end{proof}

\noindent We thus make the following definition: a wall $M$ \emph{separates} $p \in X$ from $\xi \in \bX$ if $p$ is not in $M$ and the geodesic ray $[p,\xi)$ intersects $M$, hence by Lemma \ref{l:rays and walls}(3) the boundary point $\xi$ is not in $\bM$.

We now define several inversion sets for points in $X$ or its boundary $\bTX$, and relate them to the equivalence relation $\sim$.    First, for $\xi \in \bX$, let $\Inv(\xi)$ be the set of walls separating $\xi$ from $0$.  By our definition of separation, if $\xi$ lies in $\bM$, then $M \notin \Inv(\xi)$ regardless of the relative position of $0$ and $M$.  Next, define $\Inv(p)$ for $p \in X$ to be the set of walls separating $p$ from $0$.  Finally, define $\Inv(p,\xi)$ to be the set of walls separating $p$ from $\xi$, so that $\Inv(\xi) = \Inv(0,\xi)$.  A first observation, which we use in Section~\ref{s:bTXi} below, is the following.

\begin{lemma}\label{l:rays from p}  Suppose $\xi$ and $\xi'$ are equivalent.  Then for each $p \in X$, $\Inv(p,\xi) = \Inv(p,\xi')$. 
\end{lemma}

\begin{proof}  Suppose first that $M \in \Inv(p)$, that is, that $p \in M^- \setminus M$.  Then $\xi \in \bM^+ \setminus \bM$ (respectively, $\xi' \in \bM^+ \setminus \bM$) if and only if $[p,\xi)$ (respectively, $[p,\xi')$) crosses $M$, and $\xi \in \bM$ (respectively, $\xi' \in \bM$) if and only if $[p,\xi)$ (respectively, $[p,\xi')$) does not cross $M$.  The case that $p \in M^+ \setminus M$ is similar.  If $p \in M$ then neither $[p,\xi)$ nor $[p,\xi')$ crosses $M$.
\end{proof}

The next two lemmas will be used to prove Proposition~\ref{p:infinite} below, which says that if $\xi$ and $\xi'$ are not equivalent then $\Inv(\xi)$ and $\Inv(\xi')$ differ by finitely many walls (compare the definition of blocks in Section~\ref{s:infinite reduced words} above). Lemma~\ref{l:separated1} will also be used in Section~\ref{s:proof 4} below.  We shall need the following result \cite[Theorem 2.8]{brink-howlett}.

\begin{theorem}[Parallel Wall Theorem]  \label{thm:parallelwall} Let $(W,S)$ be a Coxeter system.  Then there is a constant $k = k(W,S)$ such that for every reflection $r \in R$ and every $x \in X$, if the distance in $X$ from $x$ to the wall $M(r)$ is greater than $k$, then there is another reflection $r' \in R$ such that $M(r')$ is disjoint from $M(r)$ and $M(r')$ separates $x$ from $M(r)$.
\end{theorem}

\begin{remark}\label{rem:parallel}  The statement of \cite[Theorem 2.8]{brink-howlett} is in the framework of root systems.  As remarked on p. 182 of \cite{brink-howlett} their Theorem 2.8 can be shown to be equivalent to a result called the Parallel Wall Theorem, stated as Theorem 1.7 but not completely proved in a preprint of Davis and Shapiro~\cite{davis-shapiro}.  The result \cite[Theorem 1.7]{davis-shapiro} applies to the Cayley graph of a Coxeter system $(W,S)$, rather than to the Davis complex.  Since the Cayley graph of $(W,S)$ is naturally quasi-isometric to the Davis complex, it is not hard to obtain from the statement of \cite[Theorem 1.7]{davis-shapiro} the formulation of Theorem \ref{thm:parallelwall} we give above.   
\end{remark}

\begin{lemma}\label{l:separated1} Suppose $\xi \in \bX$, $M$ is a wall and $p \in M$.  If $\xi \notin \bM$ then $\Inv(p,\xi)$ contains infinitely many pairwise disjoint walls $M_i$ that separate $\xi$ from $\bM$.
\end{lemma}

\begin{proof}  Let $c$ be the (unique) geodesic ray such that $c(0) = p \in M$ and $c(\infty) = \xi$.  By the Parallel Wall Theorem (Theorem~\ref{thm:parallelwall}), there is
a wall $M_1$ which separates $c(t_1)$ from $M$, for some sufficiently large
$t_1$.  Now, $c$ must intersect $M_1$ at some time $t'_1 \in (0,t_1)$, so in
particular $\xi$ does not lie in $\bM_1$, by Lemma \ref{l:rays and walls}(2).  It follows
that $c$ does not stay bounded distance from $M_1$, so we can repeat the
argument to find $M_2$, $M_3$ and so on.  
\end{proof}

\begin{lemma}\label{l:basepoint}  Let $\xi \in \bX$ and $p \in X$.  Then $$\left(\Inv(0,\xi) \setminus \Inv(p,\xi)\right) \cup \left(\Inv(p,\xi) \setminus \Inv(0,\xi)\right)$$ is finite.
\end{lemma}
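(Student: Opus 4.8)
The plan is to show that the symmetric difference in question is contained in the finite set of walls which meet the geodesic segment $[0,p]$. Since $[0,p]$ has finite length, Lemma \ref{l:wall intersection} guarantees that only finitely many walls meet it, so this containment immediately yields the lemma. Write $F$ for the set of walls $M$ with $M \cap [0,p] \neq \emptyset$; note that $F$ contains both every wall separating $0$ and $p$ and every wall passing through $p$ (the latter meet $[0,p]$ at the endpoint $p$).

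The first step is to dispose of the walls $M$ with $\xi \in \bM$. By the definition of separation of a point from a boundary point, such a wall lies in neither $\Inv(0,\xi)$ nor $\Inv(p,\xi)$, so it contributes nothing to the symmetric difference. It therefore suffices to analyse walls $M$ with $\xi \notin \bM$. For such a wall I would first record that $\xi$ lies strictly on exactly one side of $M$: since $M^+$ and $M^-$ are closed and convex, the preimages under the ray $[0,\xi)$ of $M^+$ and of $M^-$ are subintervals of $[0,\infty)$ whose union is all of $[0,\infty)$, and one checks that the ray either stays in the half-space $M^+$ containing $0$ or crosses $M$ exactly once into $M^-$; in the latter case $\xi \in \bM^- \setminus \bM$ and in the former $\xi \in \bM^+ \setminus \bM$. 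Combined with Lemma \ref{l:crossing boundaries}(1), this shows $\bX \setminus \bM = (\bM^+\setminus\bM) \sqcup (\bM^-\setminus\bM)$, so each of $0$, $p$ (when off $M$) and $\xi$ is assigned a well-defined side of $M$, and by construction $M \in \Inv(0,\xi)$ (respectively $M \in \Inv(p,\xi)$) precisely when $0$ (respectively $p$) lies on the side opposite $\xi$.

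With this sign calculus in place the conclusion is a short case analysis. If $p \in M$ then $M \in F$ and we are done, so assume $0, p \notin M$. If $M \in \Inv(0,\xi) \setminus \Inv(p,\xi)$, then $0$ is opposite $\xi$ while $p$ is on the same side as $\xi$, forcing $0$ and $p$ onto opposite sides of $M$; symmetrically, if $M \in \Inv(p,\xi) \setminus \Inv(0,\xi)$ then $p$ is opposite $\xi$ and $0$ agrees with $\xi$, again placing $0$ and $p$ on opposite sides. In either case $M$ separates $0$ and $p$, so $M \in F$, completing the containment.

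The step I expect to be the main obstacle is the second one: making rigorous the claim that a boundary point $\xi \notin \bM$ lies strictly in a single open half-space $\bM^\pm \setminus \bM$, and matching this up with the ray-crossing definition of separation. The subtlety is that a priori the ray $[0,\xi)$ could meet the convex wall $M$ in more than a single point; ruling this out (or showing that whenever the ray runs along $M$ one has $\xi \in \bM$, via Lemma \ref{l:rays and walls}) is what pins down the side of $\xi$ unambiguously, and is exactly the ingredient needed for the sign calculus to be well defined. The remaining bookkeeping, including the finiteness of $F$ via Lemma \ref{l:wall intersection}, is routine.
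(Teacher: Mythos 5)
Your proposal is correct and takes essentially the same route as the paper: the paper's proof likewise reduces the symmetric difference to the finitely many walls meeting $[0,p]$ (packaged as a contradiction with infinitely many walls), using exactly your sign calculus --- a wall in $\Inv(0,\xi)$ forces $\xi \in \bM^- \setminus \bM$, while a wall with $p \in M^+\setminus M$ that is missed by $[p,\xi)$ forces $\xi \in \bM^+ \setminus \bM$. The subtlety you flag (that a ray meeting a wall must genuinely cross to the other side rather than touch and return) is treated at the same level of rigor in the paper, namely by the phrase ``by local considerations'' in the proof of Lemma \ref{l:rays and walls}(2) and by direct assertion in the proof of this lemma, so your argument is not less complete than the paper's own.
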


\begin{proof}  Assume that there are infinitely many walls $M_i \in\Inv(0,\xi) \setminus \Inv(p,\xi)$.  Note that by definition $\xi \notin \bM_i$, so $\xi \in \bM_i^- \setminus \bM_i$ for each $i$.  Since only finitely many walls cross each point of the finite length geodesic segment $[0,p]$, and the intersection points are a discrete set, we may assume that $[0,p]$ does not cross any of the walls $M_i$.  Thus in particular $p \in M_i^+ \setminus M_i$ for each $i$.  But $\xi \notin \bM_i$ and $[p,\xi)$ does not cross $M_i$, so this implies $\xi \in \bM_i^+ \setminus \bM_i$ for each $i$, a contradiction.  The argument if there are infinitely many walls in $\Inv(p,\xi) \setminus \Inv(0,\xi)$ is similar.
\end{proof}

\begin{proposition}\label{p:infinite}
Suppose $\xi$ and $\xi'$ are not equivalent.  Then $$\left(\Inv(\xi)\setminus \Inv(\xi') \right) \cup \left(\Inv(\xi')\setminus \Inv(\xi)\right)$$ is infinite.
\end{proposition}

\begin{proof}
Without loss of generality, we may consider the following two cases: $\xi \in \bM^- \setminus \bM$ and $\xi' \in \bM^+$, and $\xi \in \bM^+ \setminus \bM$ and $\xi' \in \bM$, for some wall $M$.  In the first case, by Lemma \ref{l:separated1}, there are infinitely many walls which separate $\xi$ from $\bM$, and so there are infinitely many walls which separate $\xi$ from $\xi'$.  In the second case, let $p$ be a point in $M$.  By Lemma \ref{l:separated1} and Lemma \ref{l:basepoint}, there are infinitely many walls $M_i$ which are in both $\Inv(p,\xi)$ and $\Inv(\xi) = \Inv(0,\xi)$, and which separate $\bM$ from $\xi$.  None of these walls $M_i$ are in $\Inv(p,\xi')$, since $[p,\xi')$ is contained in $M$, so again by Lemma \ref{l:basepoint} only finitely many of the $M_i$ can be in $\Inv(\xi') = \Inv(0,\xi')$.  Hence there are infinitely many $M_i$ in $\Inv(\xi) \setminus \Inv(\xi')$.  This completes the proof. \end{proof}

\subsection{The sets $\bTXi$ and the proof of (1) and (2) of Theorem~\ref{thm:main}}\label{s:bTXi}

In this section we begin by defining the subsets $\bTXi \subset \bTX$ and establishing some initial properties of these sets.  The main result of this section is then Proposition~\ref{p:partition} below, which gives a new description, using the sets $\bTXi$, for the partition of $\bTX$ induced by the equivalence classes $C(\xi)$ (see Definition~\ref{d:Cxi} above).  Proposition~\ref{p:partition} is exactly the statement of  Theorem~\ref{thm:main}(2), and also implies the first statement in Theorem~\ref{thm:main}(1).  A corollary of Proposition~\ref{p:partition} will complete the proof of Theorem~\ref{thm:main}(1).

\begin{definition}[The set $\bTXi$]\label{d:bTXi}  Let $\ii$ be an infinite reduced word.  We define $\bTXi$ to be the set of $\xi \in \bTX$ such that there exists a  geodesic ray $c$ with $c(\infty) = \xi$ such that when $c(0) = p \in X$, the walls crossed by $c$, together with the walls separating $p$ from $0$, are exactly the inversions of $\ii$ (and no wall is crossed twice).
\end{definition}

\noindent Note that by Lemma \ref{l:LP}, if $\ii$ and $\jj$ are equivalent infinite reduced words, then $\bTXi = \bTXj$.  The following lemma provides some justification for Definition~\ref{d:bTXi}, by showing that the sets $\bTXi$ are nonempty and cover $\bTX$.

\begin{lemma}\label{l:defn}  
\begin{enumerate}
\item For each infinite reduced word $\ii$, the set $\bTXi$ is nonempty.
\item Every $\xi \in \bTX$ lies in some $\bTXi$.
\end{enumerate}
\end{lemma}

\begin{proof}  For the first part, let $0 = x_0,x_1, x_2, \ldots$ be the centres of the chambers visited by an infinite reduced word $\ii$.  Since the Davis complex $X$ is a proper metric space, the space $\overline{X} = X \cup \bX$ obtained by adjoining to $X$ its visual boundary is compact \cite[p. 264]{bridson-haefliger}.  Thus the sequence $\{ x_i \}$ has a convergent subsequence $\{ x_{i_n} \}$, with limit say $\xi \in \overline{X}$.  In fact, $\xi \in \bX$, since the $x_i$ are centres of chambers and so do not become arbitrarily close.  

As noted in Definition 8.5 of \cite{bridson-haefliger}, the sequence $\{ x_{i_n} \}$ in $X \subset \overline{X}$ converges to a point $\xi \in \bX$ if and only if the geodesic segments $[x_0,x_{i_n}]$ converge (uniformly on compact subsets) to the geodesic ray $c = [x_0,\xi)$.  We claim that this ray crosses the same set of walls as $\ii$, hence $\xi \in \bTXi$ and so $\bTXi$ is nonempty.  Suppose that a wall $M$ is in $\Inv(\ii)$.  Then there is an $n > 0$ such that $M$ separates $0$ from all of $x_{i_n}, x_{i_{n+1}}, \ldots$.  Thus for all large enough $n$ the geodesic segment $[x_0,x_{i_n}]$ crosses $M$, and so $c$ crosses $M$.  Now suppose $c$ crosses a wall $M$.  Then for all $n$ large enough, the geodesic segment $[x_0,x_{i_n}]$ crosses $M$, and thus the wall $M$ separates $x_0$ from $x_{i_n}$, hence $M \in \Inv(\ii)$.

For the second part, let $\xi \in \bTX$ and let $c:[0,\infty) \to X$ be a geodesic ray in the direction of $\xi$ such that $c(0)$ lies in the identity chamber.  The sequence of chambers encountered by $c$ will give rise to sequence $w^{(1)},w^{(2)},\ldots \in W$ of elements of the Coxeter group such that for each $i$, $\ell(w^{(i)}) = \ell(w^{(i-1)}) + \ell((w^{(i-1)})^{-1}w^{(i)})$.  In particular such a sequence arises from taking finite subsequences of a single infinite reduced word.  (Note that $\ell(w^{(i)}) - \ell(w^{(i-1)})>1$ can occur if multiple walls are crossed at one time.)  Finally, we remark that the sequence $w^{(1)},w^{(2)},\ldots \in W$ must be infinite by Lemma \ref{l:rays and walls}(2).
\end{proof}

We are now ready to state and prove the main result of this section, which is exactly the statement of Theorem~\ref{thm:main}(2).

\begin{proposition}\label{p:partition}
Each $\bTXi$ is an equivalence class $C(\xi)$, and each equivalence class $C(\xi)$ is of the form $\bTXi$ for some infinite reduced word $\ii$.  Thus the $\bTXi$ form a partition of $\bTX$.
\end{proposition}
\begin{proof}
We first show that each $\bTXi$ is a union of equivalence classes $C(\xi)$.  Suppose that $\xi$ and $\xi'$ are equivalent.  By Lemma~\ref{l:defn}(2), we have $\xi \in \bTXi$ for some infinite reduced word $\ii$.  Let $c$ be a geodesic ray  with $c(\infty) = \xi$ such that when $c(0) = p$, the walls crossed by $c$, together with the walls separating $p$ from $0$, are exactly the inversions of $\ii$ (with no wall crossed twice).  Now by Lemma~\ref{l:rays from p}, since $\xi$ and $\xi'$ are equivalent, the set of walls crossed by $[p,\xi)$ is the same as the set of walls crossed by $[p,\xi')$.  Thus by definition of $\bTXi$, we have $\xi' \in \bTXi$.  

We now show that each $\bTXi$ is equal to a single equivalence class $C(\xi)$.  Suppose now that $\xi$ and $\xi'$ are \emph{not} equivalent.  Then by Proposition~\ref{p:infinite}, there are infinitely many walls which separate one but not both of the two points $\xi$ and $\xi'$ from $0$.  For any $p \in X$, there are only finitely many walls separating $p$ from $0$.  It follows from the definition of $\bTXi$ that not both of $\xi$ and $\xi'$ can lie in $\bTXi$.  Thus each $\bTXi$ is equal to a single equivalence class $C(\xi)$.

To complete the proof, we note that by Lemma~\ref{l:defn}(2), the sets $\bTXi$ cover $\bTX$, and so as the $C(\xi)$ are equivalence classes the $\bTXi$ form a partition of $X$.
\end{proof}

Since the $\bTXi$ partition $\bTX$, we have also established the statement from Theorem~\ref{thm:main}(1) that for each $\ii$ and $\jj$, either $\bTXi = \bTXj$ or $\bTXi  \cap \bTXj = \emptyset$.  The remainder of Theorem~\ref{thm:main}(1) is a consequence of the definition of blocks and the next result.

\begin{corollary}\label{c:bTXi} We have $\bTXi = \bTXj$ if and only if $\Inv(\ii)\setminus \Inv(\jj)$ and $\Inv(\jj)\setminus \Inv(\ii)$ are both finite sets.
\end{corollary}
\begin{proof}  This follows from Proposition~\ref{p:partition} together with Lemma~\ref{l:basepoint} and Proposition~\ref{p:infinite}.
\end{proof}

We note another corollary of Proposition~\ref{p:partition}, which will be used in Section~\ref{s:proof 3}.

\begin{corollary}\label{c:same walls} Let $\ii$ be an infinite reduced word and suppose $\xi, \xi' \in \bTXi$.  Then for each $p \in X$, 
$\Inv(p,\xi) = \Inv(p,\xi')$.
\end{corollary}

\begin{proof}  This is immediate from Proposition~\ref{p:partition} together with Lemma~\ref{l:rays from p}.
\end{proof}

\subsection{Proof of (3) of Theorem~\ref{thm:main}}\label{s:proof 3}

We now move to describing the geometric and topological properties of our partition of $\bTX$ into sets $C(\xi)$ or $\bTXi$.  In this section we establish Theorem~\ref{thm:main}(3).  Since we have shown that the sets $C(\xi)$ and $\bTXi$ induce the same partition, Theorem~\ref{thm:main}(3) is equivalent to Proposition~\ref{prop:path-connected} below, which states that each $C(\xi)$ is a path-connected, totally geodesic subset of $\bTX$.

In order to prove Proposition~\ref{prop:path-connected}, the first result needed is the following.  This lemma will also be used in our proof of Theorem~\ref{thm:main}(4) in Section~\ref{s:proof 4} below.

\begin{lemma}\label{l:finite set hyperplanes} Let $\xi$ and $\xi'$ be distinct points in $\bTX$, with $\xi = c(\infty)$ and $\xi' = c'(\infty)$ where $c$ and $c'$ are geodesic rays based at the same point $c(0) = c'(0) = p \in X$.  If $\xi$ and $\xi'$ are connected by a geodesic $\gamma$ in $X$ then the set of walls crossed by both $c$ and $c'$ is finite.
\end{lemma}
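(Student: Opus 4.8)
The plan is to reformulate the statement and reduce it to the finiteness of the set of walls separating two fixed points of $X$. A wall $M$ is crossed by both $c$ and $c'$ precisely when $M$ separates $p$ from $\xi$ and also separates $p$ from $\xi'$; equivalently, $p$ lies on one side of $M$ and both $\xi$ and $\xi'$ lie on the opposite side. Write $M^B$ for the closed half-space bounded by $M$ on the side containing $\xi$ and $\xi'$, and $M^A$ for the other closed half-space, so that $p \in M^A \setminus M$. Since $c$ and $c'$ cross $M$ without being contained in it (as $p \notin M$), Lemma~\ref{l:rays and walls}(2) gives $\xi, \xi' \notin \bM$, so in fact $\xi, \xi' \in \partial M^B \setminus \bM$. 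I would then aim to show that every such $M$ separates $p$ from a \emph{fixed} point of $\gamma$, and invoke the finiteness of $\mathcal{M}(p,q)$.

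The key step is to show that the whole geodesic $\gamma$ lies in $M^B$. Since the half-spaces $M^\pm$ are totally geodesic (as used in the proof of Lemma~\ref{l:crossing boundaries}(1)), the set $\gamma^{-1}(M^B) = \{ t \mid \gamma(t) \in M^B\}$ is convex, i.e.\ an interval of $\R$. I claim this interval is unbounded in both directions. If it were bounded above, then a terminal ray of $\gamma$ would lie in the closed convex set $M^A$, forcing the endpoint $\xi' = \gamma(+\infty)$ into $\partial M^A$; but $\xi' \in \partial M^B$ as well, so by Lemma~\ref{l:crossing boundaries}(1) we would get $\xi' \in \bM$, contradicting $\xi' \notin \bM$. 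The symmetric argument at $\xi = \gamma(-\infty)$ rules out being bounded below. An interval unbounded in both directions is all of $\R$, so $\gamma \subset M^B$.

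Finally, set $q_0 = \gamma(0)$. For every wall $M$ crossed by both $c$ and $c'$ we have $q_0 \in M^B$ and $p \in M^A \setminus M$. If moreover $q_0 \notin M$, then $q_0 \in M^B \setminus M$ and $p$ are on opposite sides, so $M$ separates $p$ from $q_0$ and hence $M \in \mathcal{M}(p, q_0)$; this set is finite because the walls separating $p$ and $q_0$ are exactly those crossed by the segment $[p,q_0]$, of which there are at most $f(d(p,q_0))$ by Lemma~\ref{l:wall intersection}. The only remaining possibility is $q_0 \in M$, and there are only finitely many walls through the point $q_0$ by the local finiteness of the wall arrangement. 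Thus the set of walls crossed by both $c$ and $c'$ is contained in the union of two finite sets, and is therefore finite.

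The main obstacle is the second paragraph: establishing that $\gamma$ cannot leave $M^B$. Convexity of $M^B$ makes $\gamma^{-1}(M^B)$ an interval, but one still has to rule out that $\gamma$ escapes to $M^A$ near one of its ideal endpoints, which is exactly where the hypothesis $\xi, \xi' \notin \bM$ together with Lemma~\ref{l:crossing boundaries}(1) is used. A minor subtlety is that $\gamma$ might be tangent to $M$ at the chosen basepoint $q_0$; rather than rule this out directly, I absorb it into the finite set of walls passing through $q_0$.
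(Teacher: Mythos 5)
Your proof is correct and follows essentially the same route as the paper's: both arguments show that any wall crossed by both rays must leave the geodesic $\gamma$ entirely in one closed half-space (the paper via ``a geodesic meets a wall at most once and both ideal endpoints lie on the same side,'' you via convexity of half-spaces together with $\bM^+ \cap \bM^- = \bM$), and then conclude that every such wall meets the finite-length segment from $p$ to a point of $\gamma$, of which there are only finitely many by local finiteness of the wall arrangement. The only difference is cosmetic: you treat possible tangency at $\gamma(0)$ by absorbing the walls through that point into a finite set, whereas the paper's formulation---every wall intersecting both rays must intersect the segment $\alpha$---covers that case automatically.
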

\begin{proof}
Suppose $M$ is a wall such that $c$ and $c'$ both intersect $M$, and $M$ does not contain $c(0) = c'(0)$. Then $M$ separates $c(\infty)$ and $c'(\infty)$ from $c(0) = c'(0)$.  Since $\gamma$ is a geodesic it can intersect $M$ at most once, but $c(\infty)$ and $c'(\infty)$ are on the same side of $M$, so $\gamma$ does not intersect $M$ at all.  Thus $M$ separates $\gamma$ from $c(0)=c'(0)$.

Now let $\alpha$ be any geodesic segment connecting $c(0)=c'(0)$ to some point on $\gamma$.  Every wall $M$ which intersects both $c$ and $c'$ must intersect $\alpha$.  But only finitely walls go through each point, and the intersection points on $\alpha$ are a discrete set.  Since $\alpha$ has finite length the statement of the lemma follows.
\end{proof}

The proof of Proposition~\ref{prop:path-connected} will also use the following two technical results.

\begin{lemma}\label{l:wall intersection} For each $L > 0$ there is a constant $f(L)$ so that a geodesic segment of length $L$ intersects at most $f(L)$ walls.
\end{lemma}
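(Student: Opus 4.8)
The plan is to bound the number of walls crossed by a geodesic segment by first proving a \emph{uniform} local bound and then covering the segment by finitely many balls of a fixed radius. Fix any $\rho > 0$. The first and essential step is to show that there is a constant $N = N(\rho)$ so that every closed ball $\bar B(x,\rho)$ in $X$, with $x \in X$ arbitrary, meets at most $N$ walls. The key inputs are that $X$ is proper, that the chamber $K$ is a fundamental domain for the cocompact $W$--action (every point of $X$ has a representative in $K$), and that $W$ permutes the walls by isometries.

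To prove the local bound I would proceed as follows. Let $D$ be the diameter of $K$, which is finite since $K$ is compact, and consider the closed ball $\bar B(0, D + \rho)$ about the centre $0$ of the base chamber $K$. Since $X$ is proper this ball is compact, and since the collection of walls is locally finite it meets only finitely many walls; let $N$ be this number. Now given any $x \in X$, choose $w \in W$ with $w^{-1} x \in K$, so that $d(0, w^{-1}x) \le D$. Then $\bar B(w^{-1}x, \rho) \subseteq \bar B(0, D + \rho)$, whence $\bar B(w^{-1}x, \rho)$ meets at most $N$ walls. Because $w$ is an isometry carrying walls to walls and $\bar B(x,\rho) = w\,\bar B(w^{-1}x,\rho)$, the ball $\bar B(x,\rho)$ likewise meets at most $N$ walls.

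The second step is the covering argument. Given a geodesic segment $\gamma : [0, L] \to X$, set $t_i = \min(i\rho, L)$ for $i = 0, 1, \dots, \lceil L/\rho \rceil$ and consider the balls $\bar B(\gamma(t_i), \rho)$; since consecutive sample points are at distance at most $\rho$, these balls cover the image of $\gamma$, and there are at most $\lceil L/\rho\rceil + 1$ of them. If a wall $M$ is crossed by $\gamma$, then $M$ separates the endpoints of $\gamma$, so by the local structure of walls $\gamma$ meets $M$ in a single point of its image, which lies in one of the covering balls; thus $M$ is counted among the at most $N$ walls meeting that ball. Therefore the total number of walls crossed by $\gamma$ is at most $(\lceil L/\rho\rceil + 1)\,N$, and we may take $f(L) := (\lceil L/\rho\rceil + 1)\,N(\rho)$.

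The only delicate point is the uniform local bound of the first step, and it rests entirely on cocompactness of the action: without transitivity of $W$ on chambers the number of walls meeting a fixed-radius ball could in principle fail to be bounded as its centre moves off towards infinity, and then no such $f(L)$ would exist. Everything after the local bound is elementary bookkeeping, and the precise choice of $\rho$ is immaterial to the conclusion.
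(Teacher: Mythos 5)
Your proposal is correct and takes essentially the same approach as the paper: a uniform local bound on the number of walls meeting a ball of fixed radius, followed by covering the geodesic segment with a number of such balls depending only on $L$. The only difference is that you justify the uniform local bound explicitly via cocompactness (translating the ball into a neighbourhood of the base chamber and using properness plus local finiteness of walls), whereas the paper states it in one line by appealing to the maximum number of walls incident at a point of $X$; your argument spells out what the paper leaves implicit.
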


\begin{proof}  A sufficiently small ball in $X$ intersects at most as many walls as the maximum number incident at a point.  A geodesic segment of length $L$ is covered by a fixed number (depending on $L$) of these sufficiently small balls, which completes the proof.
\end{proof}

The \emph{angle} between a geodesic $c$ and a wall $M$ that it meets is by definition the infimum of the angles between $c$ and geodesics contained in $M$.  

\begin{lemma}\label{l:angle}  There exist constants $L > 0$ and $\epsilon > 0$ such that any geodesic segment of length $L$ must intersect some wall at angle greater than $\epsilon$.
\end{lemma}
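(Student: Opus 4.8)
The plan is to argue by contradiction, combining the cocompactness of the $W$--action with a compactness (Arzel\`a--Ascoli) argument in the proper $\CAT(0)$ space $X$. Suppose no such constants exist. Then for every $n$ there is a geodesic segment $c_n \colon [-n/2,n/2] \to X$, parametrised by arc length about its midpoint, which meets every wall it crosses at angle at most $1/n$. Since $W$ acts cocompactly by isometries, and isometries carry walls to walls and preserve angles, I would translate each $c_n$ by an element of $W$ so that its midpoint $c_n(0)$ lies in a fixed compact fundamental region, without changing the crossing angles. As $X$ is proper, Arzel\`a--Ascoli then yields a subsequence converging uniformly on compact sets to a bi-infinite geodesic line $c \colon \R \to X$.

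First I would show that $c$ must cross at least one wall. By cocompactness there are only finitely many isometry types of chambers, so their diameters are bounded by some $D < \infty$. Two points of $c$ at distance greater than $D$ cannot lie in a common closed chamber, and hence, by the standard wall combinatorics of the Davis complex (in which two points lie in a common closed chamber precisely when no wall separates them), are separated by some wall $M$; thus $c$ crosses $M$. Next I would argue that $c$ crosses $M$ at a strictly positive angle $\theta$: if the angle were zero, then a direction of $c$ at the crossing point would make zero angle with a geodesic germ in $M$, so by the $\CAT(0)$ property that two geodesics issuing from a point at zero angle share an initial segment, $c$ would locally lie inside $M$ and could not pass to the opposite side, contradicting that it crosses.

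It remains to transfer this definite angle back to the segments $c_n$, and this is the step I expect to be the main obstacle. Since $c$ crosses $M$ transversally at angle $\theta > 0$, the points of $c$ just before and just after the crossing lie strictly on opposite sides of $M$ at positive distance from it; as $c_n \to c$ uniformly on compact sets, the same holds for $c_n$ for all large $n$, so $c_n$ also crosses $M$. The crux is to show that the angle at which $c_n$ meets $M$ stays bounded below, say by $\theta/2$, for this produces a crossing of $c_n$ at angle greater than $1/n$ and completes the contradiction. When the crossing point lies in the interior of a top-dimensional cell this is immediate, since there $X$ is flat, $M$ is an affine hyperplane-piece, and two $C^0$-close straight segments cross a hyperplane at nearby angles. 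The delicate case is a crossing point lying on a lower-dimensional cell, where $X$ branches; here I would work in the local model of $X$ as a Euclidean cone over the $\CAT(1)$ link, express the crossing angle as the link distance from the direction of the geodesic to the subcomplex $\Sigma_x M$, and use the convexity of the functions $t \mapsto d(c_n(t),M)$ together with the stability of transversal crossings to pin down the limiting angle.
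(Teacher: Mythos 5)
Your strategy (cocompact translation plus Arzel\`a--Ascoli, then transfer a definite crossing angle from the limit line back to the approximating segments) founders exactly where you predicted, and the obstacle is not a routine technicality: the step you need is a \emph{lower} semicontinuity of crossing angles under $C^0$ convergence of geodesics, and that is the wrong direction. Angles in $\CAT(0)$ spaces are only upper semicontinuous under such limits, so knowing that $c_n \to c$ and that $c_n$ meets $M$ at angle $\le 1/n$ does not force $c$ to meet $M$ at angle $0$. Your proposed repair via convexity of $t \mapsto d(c_n(t),M)$ cannot close this: a convex function vanishing at the crossing time may have one-sided derivative $\le \sin(1/n)$ there and nevertheless turn upward later, so the small crossing angle only controls $d(c_n(\cdot),M)$ on the Euclidean-cone neighbourhood of the crossing point $q_n$ --- and the radius of that cone neighbourhood degenerates precisely in the delicate case you identify, when $q_n$ converges to a point $q$ lying in a more singular stratum. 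Concretely, nothing in your outline excludes the scenario in which $c_n$ crosses $M$ at angle $1/n$, travels a distance $o(1)$, passes through a singular cell at distance $o(1)$ from $M$ (such cells exist arbitrarily close to a wall), and there ``turns away'' from $M$ at definite slope; ruling this out requires a quantitative analysis of how geodesics interact with walls near the singular set, which is the actual content of the lemma. A smaller point: the assertion that two geodesics issuing from a point at angle zero share an initial segment is \emph{not} a general $\CAT(0)$ property (the complement of an open disc in $\R^2$, with a boundary point, a tangent ray and the boundary arc, is a counterexample); it is true here only because $X$ is a Euclidean polyhedral complex, where geodesic germs at a point are rays in the cone over the link.

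It is worth contrasting with what the paper does, since it sidesteps semicontinuity of angles entirely. The paper's proof is quantitative and global: assuming every wall met by a segment of length $L$ is met at angle $\le \epsilon/2$, it folds the segment into the compact fundamental chamber by reflections, obtaining a broken geodesic whose corners all have angle $> \pi - \epsilon$; it then propagates these angle bounds along the path (using the triangle inequality for angles and the fact that triangles in $\CAT(0)$ spaces have angle sum at most $\pi$) and applies a comparison estimate to show the endpoints are at distance $> C(f(L)\epsilon)^{f(L)} L > L/\sqrt{2}$ apart --- impossible inside a compact chamber once $L$ is large. If you want to salvage your compactness approach, you would in effect need to prove a local version of this same quantitative estimate near the singular strata, so the soft argument does not actually buy you anything over the direct one.
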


\begin{proof}  Suppose otherwise, that is, suppose that for all $L > 0$ and $\epsilon > 0$ there is a geodesic segment of length $L$ such that every wall it intersects is met at angle $\leq \epsilon/2$.  By Lemma \ref{l:wall intersection}, this geodesic segment intersects at most $f(L)$ walls.  By reflecting this geodesic segment, we may consider the images of its subsegments inside the fundamental chamber.  Let this ``broken geodesic" have initial point $a_1$ and final point $a_n$ and let $a_2,\ldots,a_{n-1}$ be the points at which the path turns.  In other words the maximal geodesic subsegments inside the fundamental chamber are $[a_k,a_{k+1}]$ for $1 \leq k < n$.  Note that $n \leq f(L)$, $\sum_{k = 1}^{n-1} d(a_k,a_{k+1}) = L$, and for $1 \leq k < n$ the angles between the geodesic segments $[a_k,a_{k+1}]$ and the walls containing the points $a_k$ and $a_{k+1}$ are both $\leq \epsilon/2$.  By \cite[I.1.13(2)]{bridson-haefliger} it follows that for $1 \leq k \leq n-2$ the angle between $[a_k,a_{k+1}]$ and $[a_{k+1}, a_{k+2}]$ is greater than $\pi - \epsilon$.  To simplify notation we will denote by $\angle(a_i a_j, a_ja_k)$ the angle between the geodesic segments $[a_i,a_j]$ and $[a_j, a_k]$, for $1 \leq i,j,k \leq n$.

We claim that for every $L > 0$, there is an $\epsilon = \epsilon(L)$ such that $d(a_0,a_n) > L/\sqrt{2}$.  This suffices to complete the proof, since for $L$ large enough there are no points inside the fundamental chamber at distance $L/\sqrt{2}$ apart.

We first show that for $1 \leq k < n$, the angle between $[a_0,a_k]$ and $[a_k,a_{k+1}]$ is greater than $\pi - k\epsilon$.  Suppose by induction that $\angle(a_0a_k,a_ka_{k+1}) > \pi - k\epsilon$.  In a $\CAT(0)$ space the angle sum of a triangle is at most $\pi$, so it follows that $\angle(a_0a_{k+1},a_{k+1}a_k) \leq k\epsilon$.  Let $a_0'$ be a point such that the geodesic segment $[a_0, a_0']$ contains $a_{k+1}$ in its interior and let $a_k'$ be a point such that the geodesic segment $[a_k, a_k']$ contains $a_{k+1}$ in its interior.  Since $\angle(a_ka_{k+1},a_{k+1}a_{k+2}) > \pi - \epsilon$, we have $\angle(a_k'a_{k+1},a_{k+1}a_{k+2}) \leq \epsilon$, and since $\angle(a_0a_{k+1},a_{k+1}a_k) \leq k\epsilon$ we also have $\angle(a_k'a_{k+1},a_{k+1}a_0') \leq k\epsilon$.   By \cite[Proposition I.1.14]{bridson-haefliger} it follows that $\angle(a_0'a_{k+1},a_{k+1}a_{k+2})$ is less than or equal to $(k+1)\epsilon$, so $\angle(a_0a_{k+1},a_{k+1}a_{k+2}) > \pi - (k+1)\epsilon$ as required.

In particular, $\angle(a_0a_{k},a_{k}a_{k+1}) > \pi - n\epsilon$ for each $k$.  Now using \cite[Proposition II.1.7(5)]{bridson-haefliger}, we get that for every $k$ we have
\begin{equation}\label{E:Cne}
d(a_0,a_{k+1}) > C(n\epsilon) \left(d(a_0,a_k) + d(a_k,a_{k+1}) \right)
\end{equation}
 for some constant $C$ depending only on $n\epsilon$.  The function $C = C(\delta)$ has the property that $C \to 1$ as $\delta \to 0$.  In fact, one can pick $C(\delta) = \sqrt{(1-\cos(\pi-\delta))/2}$. Repeatedly using \eqref{E:Cne}, we obtain
$$
d(a_0,a_n) > C(n\epsilon)^n \sum_{k=0}^{n-1} d(a_k,a_{k+1}) = C(n\epsilon)^n L.
$$
Now pick $\epsilon = \epsilon(L) > 0$ so that $1 > C(f(L)\,\epsilon) > (1/\sqrt{2})^{1/f(L)}$.  Then with this $\epsilon$, every geodesic segment of length $L$ must intersect some wall at angle greater than $\epsilon$.
\end{proof}

The last result needed for the proof of Proposition~\ref{prop:path-connected} is the following.

\begin{lemma}\label{l:distance} Suppose $\xi, \xi' \in \bTXi$.  Then $d_T(\xi,\xi') < \pi$. \end{lemma}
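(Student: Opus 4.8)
The plan is to show that $\xi$ and $\xi'$ cannot be the endpoints of a geodesic line in $X$, and then to read off the bound on $d_T$ from Lemma \ref{l:dichotomy}. We may assume $\xi \neq \xi'$. The starting observation is that, since $\xi$ and $\xi'$ both lie in $\bTXi$, Proposition \ref{p:partition} gives $C(\xi) = C(\xi')$; in particular, by the definition of the equivalence relation defining $C(\xi)$, the points $\xi$ and $\xi'$ lie on the same side of every wall, i.e.\ $\xi \in \bM^\pm$ if and only if $\xi' \in \bM^\pm$ for every wall $M$. Thus no wall separates $\xi$ from $\xi'$.

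First I would rule out case (2) of Lemma \ref{l:dichotomy}. Suppose, for contradiction, that there is a geodesic line $\gamma \colon (-\infty,\infty) \to X$ with $\gamma(-\infty) = \xi$ and $\gamma(+\infty) = \xi'$. Applying Lemma \ref{l:angle} to a subsegment of $\gamma$ of length $L$ (chosen so that the guaranteed intersection occurs at an interior point, which is possible since $\gamma$ is bi-infinite and the walls are locally finite), $\gamma$ meets some wall $M$ at angle greater than $\epsilon > 0$, and hence crosses $M$ transversally at a single interior point $\gamma(t_0)$. Since the forward ray $\gamma|_{[t_0,\infty)}$ meets $M$ but is not contained in it, Lemma \ref{l:rays and walls}(2) gives $\xi' = \gamma(+\infty) \notin \bM$, and symmetrically $\xi \notin \bM$. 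As the two ends of $\gamma$ lie in the opposite open half-spaces $M^+ \setminus M$ and $M^- \setminus M$, their limits lie in $\bM^+ \setminus \bM$ and $\bM^- \setminus \bM$ respectively, so $\xi$ and $\xi'$ are on opposite sides of $M$ and are separated by $M$ in the sense following Lemma \ref{l:crossing boundaries}. This contradicts the conclusion of the previous paragraph, so no such geodesic line exists.

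Since $\xi$ and $\xi'$ are not joined by a geodesic in $X$, Lemma \ref{l:dichotomy} yields a geodesic arc in $\bTX$ joining them with $d_T(\xi,\xi') = \angle(\xi,\xi') \leq \pi$. To upgrade this to the strict inequality $d_T(\xi,\xi') < \pi$, I would invoke properness of $X$: in a proper $\CAT(0)$ space, if $\angle(\xi,\xi') = \pi$ then $\xi$ and $\xi'$ are in fact joined by a geodesic line (the relevant part of \cite[Proposition II.9.21]{bridson-haefliger}). Such a line was just excluded, so $\angle(\xi,\xi') < \pi$ and hence $d_T(\xi,\xi') < \pi$. The main obstacle is this last step: securing the implication ``$\angle(\xi,\xi') = \pi \Rightarrow$ there is a geodesic line'' in the proper setting and matching it precisely to the cited reference. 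The transversality bookkeeping in the second paragraph (ensuring that the wall genuinely separates the two ends of $\gamma$, with neither endpoint lying in $\bM$) is routine but must be handled carefully using Lemmas \ref{l:rays and walls} and \ref{l:crossing boundaries}.
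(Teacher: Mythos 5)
Your skeleton (rule out a geodesic line in $X$, then read off $d_T \leq \pi$ from Lemma \ref{l:dichotomy}) is reasonable, but the proof breaks at exactly the step you flag, and that step is the crux of the lemma. The implication ``$X$ proper and $\angle(\xi,\xi')=\pi$ $\Rightarrow$ $\xi$ and $\xi'$ are joined by a geodesic line'' is not what \cite[Proposition II.9.21]{bridson-haefliger} provides. The parts of that proposition used in Lemma \ref{l:dichotomy} say: if there is \emph{no} geodesic line joining $\xi$ to $\xi'$, then $d_T(\xi,\xi') = \angle(\xi,\xi') \leq \pi$ and the distance is realized by an arc in $\bTX$; the remaining part produces a geodesic line only under the \emph{strict} hypothesis $d_T(\xi,\xi') > \pi$. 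When $\angle(\xi,\xi')=\pi$ and no line exists, these statements are perfectly consistent ($d_T = \pi$ exactly), so the cited result cannot close the argument. The danger is that the supremum defining $\angle(\xi,\xi')$ may equal $\pi$ without being attained at any point $p$ (if it were attained at some $p$, the two rays from $p$ would concatenate to a line and you would be in the easy case). Eliminating this ``supremum equals $\pi$ but never attained'' configuration is precisely what the paper's proof does, using Coxeter-specific geometry: take $\epsilon$ to be the constant from Lemma \ref{l:angle}, choose $p$ with $\angle_p(\xi,\xi') > \pi - \epsilon$, use Lemma \ref{l:same walls} to see that $[p,\xi)$ and $[p,\xi')$ cross the same set of walls, use Lemma \ref{l:angle} to find a wall $M$ crossed by $[p,\xi)$ at angle greater than $\epsilon$, and then observe that the triangle with vertices $p$ and the two intersection points of $M$ with the rays has angle sum greater than $\pi$, contradicting $\CAT(0)$. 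Your proposal contains no substitute for this quantitative step.

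There is also a smaller gap in your second paragraph: from ``$\gamma$ meets $M$ at angle $>\epsilon$'' you infer that $\gamma$ crosses to the other side, so that its two ends lie in opposite open half-spaces. Meeting a wall at a single point at positive angle does not formally imply passing through it; a priori the geodesic could touch $M$ and return to the same side, and ruling this out requires an analysis of the local structure of walls (the link of the intersection point) that neither Lemma \ref{l:rays and walls} nor Lemma \ref{l:crossing boundaries} supplies. This gap is patchable: for instance, choose instead a wall $M$ strictly separating $\gamma(-n)$ from $\gamma(n)$ (such walls exist since these points lie in distinct chambers); connectedness of the two components of $\gamma \setminus M$ then forces the ends of $\gamma$ into opposite open half-spaces, and Lemma \ref{l:rays and walls}(2) gives $\xi,\xi' \notin \bM$, so $M$ separates $\xi$ from $\xi'$, the contradiction you want. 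Alternatively the entire paragraph can be replaced by the paper's one-line appeal to Lemma \ref{l:finite set hyperplanes}: by Lemma \ref{l:same walls} the rays $[p,\xi)$ and $[p,\xi')$ cross the same, infinite, set of walls, so no geodesic in $X$ joins $\xi$ to $\xi'$. But even with the second paragraph repaired, the final step remains a genuine gap, and it cannot be filled by citation alone.
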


\begin{proof}  It follows from Lemma \ref{l:finite set hyperplanes} that if $\xi' \in C(\xi)$ then $\xi$ and $\xi'$ are not connected by a geodesic in $X$.  Hence by Proposition 9.21(1) and (2) of \cite{bridson-haefliger}, $d_T(\xi,\xi') = \angle(\xi,\xi') \leq \pi$.  Suppose that $d_T(\xi,\xi') = \pi$.  Fix $\varepsilon > 0$.  Then there is a point $p \in X$ such that $\angle_p(\xi,\xi') >  \pi - \varepsilon$.  Let $c$ be the geodesic ray from $p$ to $\xi$ and $c'$ the geodesic ray from $p$ to $\xi'$.  Then by Corollary \ref{c:same walls}, $c$ and $c'$ cross the same set of walls.  By Lemma \ref{l:angle}, there is a wall $M$ that intersects $c$ at angle greater than $\varepsilon$.  This wall $M$ also intersects $c'$, and the triangle with vertices the two intersection points and $p$ will have total angle greater than $\pi$.  This is a contradiction.
\end{proof}

\begin{proposition}\label{prop:path-connected}
Each $C(\xi)$ is a path-connected, totally geodesic subset of $\bTX$.
\end{proposition}
\begin{proof}
It follows from Lemma \ref{l:finite set hyperplanes} that if $\xi' \in C(\xi)$ then $\xi$ and $\xi'$ are not connected by a geodesic in $X$.  Hence by Lemma \ref{l:dichotomy} and Lemma \ref{l:distance}, there is a unique geodesic arc in $\bTX$ connecting $\xi$ and $\xi'$, of length strictly less than $\pi$.  We will show that each point $\eta$ on the geodesic arc in $\bTX$ connecting $\xi'$ and $\xi$ in fact lies in $\bTXi$.

By Proposition \ref{p:partition}, it suffices to show that $\eta \in C(\xi)$.  Suppose that there is a wall $M$ such that $\xi,\xi' \in \bM^+$ but $\eta \in \bM^- \setminus \bM$.  Then the arc $[\xi,\xi']$ has a (possibly equal) subarc $\gamma = [\zeta,\zeta']$ such that $\zeta, \zeta' \in \bM$ and every point in the interior of $\gamma$ is in $\bM^- \setminus \bM$.  Let $r$ be the reflection in the wall $M$.  Then $r$ induces an isometry of the Tits boundary, which fixes $\zeta$ and $\zeta'$ but does not fix $\gamma$.  But then $\gamma$ and $r(\gamma)$ are two distinct geodesic arcs in $\bTX$ connecting $\zeta$ and $\zeta'$ of length strictly less than $\pi$.  This is impossible, so there is no wall $M$ such that $\xi,\xi' \in \bM^+$ but $\eta \in \bM^- \setminus \bM$.  The argument is similar if $\xi,\xi' \in \bM^-$ but $\eta \in \bM^+ \setminus \bM$.  Therefore $\eta \in C(\xi)$, and so $C(\xi)$ is a path-connected totally geodesic subset of $\bTX$. \end{proof}

This completes the proof of (3) of Theorem~\ref{thm:main}.

\subsection{Proof of (4) of Theorem~\ref{thm:main}}\label{s:proof 4}

We now complete the proof of Theorem~\ref{thm:main} by establishing (4), which describes the closure of the sets $\bTXi$.  Since each $\bTXi$ is an equivalence class $C(\xi)$, we first consider the closures of the $C(\xi)$, in Proposition~\ref{p:closure}.   Theorem~\ref{t:closure partition} then proves Theorem~\ref{thm:main}(4).

We will need the following definitions.  For each $M$, let
$$
\epsilon_M(\xi) = \begin{cases} 0 & \mbox{if $\xi \in \bM$} \\
+ & \mbox{if $\xi \in \bM^+ \setminus \bM$} \\
- & \mbox{if $\xi \in \bM^- \setminus \bM$.}
\end{cases}
$$
Define $\cbM^\pm := \bM^\pm \setminus \bM$ and $\cbM^0 := \bM$.  Then
$$
C(\xi) = \bigcap_{M} \cbM^{\epsilon_M(\xi)}.
$$
Now let 
$$
C'(\xi) = \bigcap_{M} \bM^{\epsilon_M(\xi)}
$$
where $\bM^0 :=\bM$.

\begin{proposition}\label{p:closure}
The closure $\overline{C(\xi)}$ is equal to $C'(\xi)$.
\end{proposition}
\begin{proof}
Let $\eta' \in C'(\xi)$ and $\eta \in C(\xi)$.  Suppose $\eta'$ and $\eta$ are in the situation of Lemma \ref{l:dichotomy}(1).  Then as in the proof of Proposition \ref{p:infinite}, there are infinitely many walls $M$ that separate $\eta'$ from $0$ which do not separate $\eta$ from $0$.  This is impossible from the definition of $C'(\xi)$.  Thus there is a geodesic arc $\gamma:[0,l] \to \bTX$ with $\gamma(0) = \eta'$ and $\gamma(l) = \eta$.  The interior of this arc cannot intersect any walls, again by the definition of $C'(\xi)$.  Thus $\gamma((0,l]) \subset C(\xi)$, and so $\gamma(0) \in \overline{\gamma((0,l])} \subset \overline{C(\xi)}$.  Finally, we note that $C'(\xi)$ is closed, since it is an intersection of closed subspaces (Lemma \ref{l:halfspace}(1)).
\end{proof}

The following result proves Theorem~\ref{thm:main}(4), and so completes the proof of Theorem \ref{thm:main}.

\begin{theorem}\label{t:closure partition}
The $\bTXi$ form a partition of $\bTX$ satisfying
$$
\overline{\bTXi} = \bigsqcup \{ \bTXj \mid \jj \mbox{ a set of representatives for the blocks $\leq B(\ii)$}\} = \bigcup_{\jj \leq \ii} \bTXj.
$$
\end{theorem}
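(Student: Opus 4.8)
The plan is to deduce the displayed formula from the description of closures in Proposition~\ref{p:closure}, turning the problem into a comparison of the combinatorial order on blocks with a geometric ``face'' relation between the pieces $C(\xi)$. Fix $\xi$ with $\bTXi = C(\xi)$. First I would replace $\ii$ by the infinite reduced word tracing the ray $[0,\xi)$; as in Lemma~\ref{l:every xi in some bTXi} this word lies in the same block, so both $\bTXi$ and the right-hand side are unchanged, and now $\Inv(\ii) = \Inv(\xi)$ exactly, where $\Inv(\xi) = \{M \mid \xi \in \bM^- \setminus \bM\}$. Two bookkeeping facts will be used throughout: $\overline{\bTXi} = C'(\xi)$ is a union of pieces (if $\eta \in C'(\xi)$ then $C(\eta) \subseteq C'(\xi)$, since each condition $\eta \in \bM^{\epsilon_M(\xi)}$ depends only on the sign data $(\epsilon_M(\eta))_M$); and, by Lemma~\ref{l:infinite}, each block has a well-defined sign vector, whose $-$-entries form the common inversion set of all boundary points in the corresponding piece.

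I would organise the argument as the chain $C'(\xi) \subseteq U_2 \subseteq U_1 \subseteq C'(\xi)$, where $U_2 = \bigcup_{\jj \leq \ii} \bTXj$ and $U_1$ is the disjoint union over representatives of the blocks $\leq B(\ii)$. The inclusion $C'(\xi) \subseteq U_2$ is the easy one: for $\eta \in C'(\xi)$ and any wall $M \notin \Inv(\xi)$ we have $\epsilon_M(\xi) \in \{+,0\}$, so $\eta \in \bM^{\epsilon_M(\xi)} \subseteq \bM^+$ and hence $M \notin \Inv(\eta)$; thus $\Inv(\eta) \subseteq \Inv(\xi) = \Inv(\ii)$. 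Letting $\jj$ trace $[0,\eta)$ gives $\Inv(\jj) = \Inv(\eta) \subseteq \Inv(\ii)$, so $\jj \leq \ii$ by Lemma~\ref{l:LP} and $\eta \in \bTXj$. The inclusion $U_2 \subseteq U_1$ is immediate, since $\jj \leq \ii$ implies $B(\jj) \leq B(\ii)$ and $\bTXj$ depends only on $B(\jj)$.

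The substance is $U_1 \subseteq C'(\xi)$: every block $B \leq B(\ii)$ has its piece inside $C'(\xi)$. Since that piece is determined by a single sign vector, membership of one of its boundary points $\eta$ in $C'(\xi)$ is equivalent to the \emph{face relation} $\epsilon_M(\eta) \in \{\epsilon_M(\xi),0\}$ for every wall $M$; unravelling this, it says exactly that $\Inv(\eta) \subseteq \Inv(\xi)$ (no sign is reversed from $-$) and that $\eta$ never lies strictly on the far side of a wall where $\xi$ lies on the wall or on the near side (no sign is reversed from $+$), so in particular $\xi \in \bM \Rightarrow \eta \in \bM$. From $B \leq B(\ii)$ one obtains only that $\Inv(\eta) \setminus \Inv(\xi)$ is \emph{finite}, so the difficulty---and the main obstacle---is to upgrade this finite approximation to the two exact inclusions above. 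The tool I would use is Lemma~\ref{l:separated}: if $\xi \in \bM$ but $\eta \notin \bM$, or if some wall separates $\eta$ from $\xi$ in the ``wrong'' direction, then $\eta$ and $\xi$ are separated by infinitely many walls. Combining this with the reflection $r_M$ (which fixes $\bM$ pointwise, hence fixes $\xi$) and the Parallel Wall Theorem, I expect to contradict the finiteness of $\Inv(\eta) \setminus \Inv(\xi)$, thereby forcing the face relation. Pinning down the orientation, relative to the identity chamber $0$, of the infinitely many separating walls produced by Lemma~\ref{l:separated} is the delicate point I anticipate, since the combinatorial block order gives control in only one of the two directions.

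Once the chain closes, all three sets coincide with $\overline{\bTXi}$. The partition of $\bTX$ established in Proposition~\ref{p:partition} then shows that the occurring pieces are pairwise disjoint, yielding the disjoint union $\bigsqcup$, and identifies the occurring blocks as precisely those $\leq B(\ii)$, so that $U_1 = U_2$ as asserted. Together with Proposition~\ref{p:partition} and Proposition~\ref{prop:path-connected}, this completes the proof of Theorem~\ref{thm:main}.
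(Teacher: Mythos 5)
Your architecture (the chain $C'(\xi)\subseteq U_2\subseteq U_1\subseteq C'(\xi)$, with Proposition~\ref{p:closure} converting closures into sign-vector conditions) is the same as the paper's, and your argument for $C'(\xi)\subseteq U_2$ is essentially the paper's proof of $\overline{\bTXi}\subseteq\bigcup_{\jj\le\ii}\bTXj$. But your opening reduction is circular as stated: replacing $\ii$ by the word tracing $[0,\xi)$ and asserting that $\bigcup_{\jj\le\ii}\bTXj$ is unchanged presupposes that this union is independent of the block representative, which is part of what the theorem asserts --- and the paper's Remark after the theorem stresses precisely that the relation $\jj\le\ii$ is \emph{not} block-invariant. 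This is repairable along the paper's lines: take the basepoint $p$ furnished by the definition of $\bTXi$ (so that $[0,p]\cup[p,\xi)$ crosses exactly $\Inv(\ii)$, no wall twice) and let $\jj$ trace $[0,p]\cup[p,\eta)$; then $\Inv(\jj)=\Inv(p)\cup\Inv(\eta)\subseteq\Inv(\ii)$, so $\jj\le\ii$ for the \emph{given} representative, with no reduction needed.

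The serious problem is that the inclusion $U_1\subseteq C'(\xi)$ --- the substantive half of the theorem --- is not proved: you explicitly defer it (``I expect to contradict\ldots'', ``the delicate point I anticipate''), and the point you defer is exactly where the work lies. The paper closes it by contrapositive in three cases. If $\eta$ and $\xi$ lie in different path components of $\bTX$, they are joined by a geodesic of $X$, so Lemma~\ref{l:finite set hyperplanes} makes $\Inv(\xi)\cap\Inv(\eta)$ finite, whence both differences are infinite and the blocks are incomparable. If the geodesic arc in $\bTX$ from $\eta$ to $\xi$ meets some $\bM$ in its interior, the proof of Lemma~\ref{l:separated} yields a bi-infinite line of pairwise disjoint walls $M_i$, each separating $\xi$ from $\eta$; the orientation question you flag is then settled by the observation that $0$ cannot lie on the same side of all the $M_i$ (otherwise a finite-length geodesic from $0$ to $M_0$ would cross infinitely many of them, contradicting Lemma~\ref{l:wall intersection}), so infinitely many $M_i$ lie in $\Inv(\xi)\setminus\Inv(\eta)$ and infinitely many in $\Inv(\eta)\setminus\Inv(\xi)$, killing comparability. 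Finally, if the arc meets no wall in its interior but $\xi\in\bM$ and $\eta\notin\bM$, the same argument makes $\Inv(\eta)\setminus\Inv(\xi)$ infinite, contradicting $B(\jj)\le B(\ii)$. Without this case analysis and orientation argument (or a substitute), your proposal establishes only the easy inclusion, so as it stands it has a genuine gap.
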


\begin{remark}
Note that it is not true that if $\bTX(\jj') \subset \overline{\bTXi}$ then $\jj' \leq \ii$.  One may have to first increase $\Inv(\ii)$ by a finite set of walls before this inequality holds.
\end{remark}

\begin{proof}
Let $\bTXi = C(\xi)$ and $\xi' \in \bTX$.  Suppose $C(\xi') \subset \overline{C(\xi)}$.  It follows from Proposition \ref{p:closure} that $\Inv(\xi') \subset \Inv(\xi)$.  Let $p \in X$ be such that the set of walls crossed from $0$ to $p$, and then from $p$ to $\xi$, is equal to $\Inv(\ii)$ (with no wall crossed twice).  Then using Lemma \ref{l:rays and walls}(3), one sees that the set of walls crossed from $0$ to $p$, and then from $p$ to $\xi'$, includes no wall twice, and is equal to $\Inv(p) \cup \Inv(\xi')$.  Thus $\overline{\bTXi} \subset \bigcup_{\jj \leq \ii} \bTXj$.

Conversely, suppose $\jj$ is such that $C(\xi') = \bTXj \notin \overline{C(\xi)}$.  If $\xi'$ is not in the same path-component of $\bTX$ as $C(\xi)$, then by Lemma \ref{l:finite set hyperplanes}, we know that $\jj$ and $\ii$ are incomparable.  Now assume that $\xi$ and $\xi'$ are in the same path component of $\bTX$.  Let $\gamma:[0,l] \to \bTX$ be the geodesic arc such that $\gamma(0) = \xi'$ and $\gamma(l) = \xi$.  If the interior of $\gamma$ intersects some wall $\bM$, then by Lemma \ref{l:separated1} there is a line of pairwise non-intersecting walls $M_i$ separating $\xi$ from $\bM$, and hence separating $\xi$ from $\xi'$.  Note that the origin (or any other point) cannot lie on the same side of all these walls.  Otherwise any (finite length) geodesic joining the origin to $M$ will intersect infinitely many of these walls.  It follows that $\Inv(\xi) \setminus \Inv(\xi')$ and $\Inv(\xi') \setminus \Inv(\xi)$  are both infinite, and so $\jj$ and $\ii$ are incomparable.

Finally, we consider the case that $\gamma((0,l))$ intersects no walls, but that $\xi$ is contained in some boundary of a wall $\bM$, and $\xi'$ is not contained in $\bM$.  In this case, the same argument shows that $\Inv(\xi')\setminus \Inv(\xi)$ is infinite and so we cannot have $\jj < \ii$.
\end{proof}

\section{The reflection group $W(\xi)$}\label{s:walls2}

In this section we study the group $W(\xi)$ generated by the reflections in the set $\Mxi$ of walls which have $\xi \in \bTX$ in their boundary.  We prove Theorem~\ref{thm:Wxi intro} of the introduction in Section~\ref{s:proof Wxi}, and then in Section~\ref{s:geometry Mxi} relate the geometry of $X$ to $\Mxi$.  The results of Section~\ref{s:geometry Mxi} will be used in several of the special cases we consider in Section~\ref{s:classes} below.

\subsection{Proof of Theorem~\ref{thm:Wxi intro}}\label{s:proof Wxi}

We now define the group $W(\xi)$ and prove Theorem~\ref{thm:Wxi intro}, which is stated in the introduction.  

Given $\xi \in \bTX$ define $\Mxi$ to be the (possibly empty) set of walls in $X$ which have $\xi$ in their boundary, that is,
\[ \Mxi := \{ M \in \mathcal{M} \mid \xi \in \bM \}.\]
Now for each $\xi \in \bTX$, define $\Rxi$ to be the (possibly empty) set of reflections in walls in $\Mxi$.  That is, $\Rxi$ is the set of reflections in walls which have $\xi$ in their boundary.  If $\Rxi$ is nonempty, let $\Wxi$ be the subgroup of $W$ generated by the elements of $\Rxi$.  If $\Rxi$ is empty, then define $\Wxi$ to be the trivial group.

By a theorem of Deodhar \cite{Deodhar}, proved independently by Dyer \cite{Dyer}, the group $\Wxi$ is itself a Coxeter group, with respect to a canonical system of generators.  Dyer \cite[Corollary (3.4)(i)]{Dyer} also established the following.

\begin{lemma}\label{L:Dyer}  Let $w \in \Wxi$ be arbitrary and $r \in \Wxi$ be a reflection.  Then $\ell(rw) < \ell(w)$ in $\Wxi$ if and only if $\ell(rw) < \ell(w)$ in $W$.
\end{lemma}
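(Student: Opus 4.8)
The plan is to prove Lemma \ref{L:Dyer}, which asserts that for $w \in \Wxi$ and a reflection $r \in \Wxi$, the length-decreasing condition $\ell(rw) < \ell(w)$ holds in the reflection subgroup $\Wxi$ precisely when it holds in the ambient group $W$. This is attributed to Dyer \cite[Corollary (3.4)(i)]{Dyer}, so in principle one could simply cite it; but to sketch an honest proof I would reduce the statement to the standard reflection-length characterization of the weak order in any Coxeter group, applied once to $\Wxi$ and once to $W$, and then show the two characterizations agree.

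First I would recall the reflection-set description of length: in any Coxeter system, $\ell(rw) < \ell(w)$ if and only if $r$ lies in the inversion set of $w^{-1}$ (equivalently, $w^{-1}$ sends the positive root associated with $r$ to a negative root). So the claim becomes a comparison between the inversion set of $w$ computed inside $\Wxi$ (with its canonical Coxeter generators, as furnished by the Deodhar--Dyer theorem) and the inversion set computed inside $W$. The key structural fact, which is the substance of Dyer's work, is that the canonical generating set of the reflection subgroup $\Wxi$ is defined precisely so that a reflection $t \in \Wxi$ is a \emph{canonical} reflection of $\Wxi$ exactly when it behaves like a simple reflection with respect to the induced order; more generally the reflections of $\Wxi$ (as an abstract Coxeter group) coincide with the elements of $\Rxi$, i.e.\ with those reflections of $W$ that happen to lie in $\Wxi$. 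Granting this identification of reflections, the positive root of $r$ inside $\Wxi$ and inside $W$ point in a compatible direction.

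The main step I would carry out is to show that the sign of $w^{-1}(\alpha_r)$ is unchanged whether we view $w$ as an element of $\Wxi$ acting on the root subsystem $\Phi \cap \Wxi$-roots or as an element of $W$ acting on $\Phi$. Concretely, each $t \in \Rxi$ has a genuine positive root $\alpha_t \in \Phi^+$ in the ambient system, and the subsystem $\Phi_{\Wxi} := \{\pm w(\alpha_t) \mid w \in \Wxi,\ t \in \Rxi\}$ is stable under $\Wxi$; Dyer's theorem says $\Wxi$ acts on $\Phi_{\Wxi}$ exactly as an abstract Coxeter group acts on its own root system, with the positivity of a root in $\Phi_{\Wxi}$ matching positivity in $\Phi^+$. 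Thus $\ell(rw)<\ell(w)$ in $\Wxi$ $\iff$ $w^{-1}(\alpha_r)$ is a negative root of $\Phi_{\Wxi}$ $\iff$ $w^{-1}(\alpha_r) \in \Phi^-$ $\iff$ $\ell(rw)<\ell(w)$ in $W$, giving the equivalence.

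The hard part will be establishing the compatibility of the two root systems, namely that the canonical simple system for $\Wxi$ produced by Deodhar--Dyer yields a positive cone contained in $\Phi^+$, so that positivity is not merely preserved up to sign but genuinely inherited from $W$. This is exactly where the content of \cite{Deodhar} and \cite{Dyer} is needed, and I would lean on their construction rather than re-derive it: the canonical generators of $\Wxi$ are by definition the reflections $t \in \Rxi$ whose root $\alpha_t$ is positive and cannot be written as a nonnegative combination of $\alpha_{t'}$ for other $t' \in \Rxi$, and this choice forces the inherited positivity. Once that identification is in hand, the remainder is the routine translation between length conditions and root signs described above, which requires no further geometric input from the Davis complex or the boundary $\bTX$.
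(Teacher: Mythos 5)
Your argument is correct, but there is nothing in the paper to compare it against: the paper gives no proof of this lemma at all, quoting it directly from Dyer \cite[Corollary (3.4)(i)]{Dyer}. What you have written is the standard derivation of that corollary from the underlying structure theory, and it holds up. The chain of equivalences is sound: in any Coxeter system $\ell(rw)<\ell(w)$ if and only if $w^{-1}\alpha_r$ is a negative root; applying this once in $W$ and once in $\Wxi$ with its canonical generators, the lemma reduces to the single assertion that the root $w^{-1}\alpha_r$ --- which lies in the subsystem $\Phi_{\Wxi}=\{\alpha\in\Phi \mid s_\alpha\in \Wxi\}$ because $w,r\in\Wxi$ --- is negative for the subsystem exactly when it is negative in $\Phi$. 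That assertion is precisely the Deodhar--Dyer theorem that $\Phi_{\Wxi}\cap\Phi^+$ is the positive system attached to the canonical simple system of the reflection subgroup, with the identification $\Wxi$-equivariant. So your proof is correct, with the caveat you yourself acknowledge: its substance is still a citation, since you have replaced the paper's appeal to Corollary (3.4)(i) by an appeal to the theorem that corollary follows from. This is a reasonable trade --- it makes visible \emph{why} the two length functions agree on descents (positivity in the subsystem is genuinely inherited from $\Phi^+$, not merely compatible up to sign) --- at no real cost in rigour. One small imprecision: you say the reflections of the abstract Coxeter group $\Wxi$ ``coincide with the elements of $\Rxi$''; what Dyer gives directly is that they coincide with the reflections of $W$ lying in $\Wxi$, and the equality of that set with $\Rxi$ is a separate fact established later in the paper (Corollary \ref{c:reflections in Wxi}, via closure of $\Rxi$ under conjugation). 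For the present lemma only the former identification is needed, so this does not affect your argument.
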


In order to say more about $\Wxi$, we consider the set $\Rxi$ and the set of reflections of $\Wxi$.

\begin{lemma}\label{l:Rxi}  Let $\xi \in \bTX$.
\begin{enumerate}
\item The set $\Rxi$ is closed under conjugation, that is, if $r,t$ lie in $\Rxi$ then $rtr^{-1}$ lies in $\Rxi$.
\item Every reflection in $\Wxi$ is conjugate via an element of $\Wxi$ to a reflection in $\Rxi$.
\end{enumerate}
\end{lemma}

\begin{proof}  For the first part, we have to show that if we reflect $M(t)$ in $M(r)$ then the resulting wall $M' = r.M(t)$, which is fixed by the reflection $rtr^{-1}$, lies in $\Mxi$.  Let $c$ be a geodesic contained in $M(t)$ such that $c(\infty) = \xi$.  Let $c' = r.c$ be its reflection, which is contained in $M'$.  Since $\xi$ is fixed by the reflection $r$, we have $c'(\infty) = \xi = c(\infty)$, and so $M' \in \Mxi$. 

The second part is Corollary (3.11)(ii) in Dyer \cite{Dyer}.
\end{proof}

An immediate consequence of Lemma~\ref{l:Rxi} is that:

\begin{corollary}\label{c:reflections in Wxi}  The set $\Rxi$ is exactly the set of reflections of $\Wxi$.  
\end{corollary}

For use in Section~\ref{s:hyperbolic} we note:

\begin{corollary}\label{c:Rxi finite}  The group $\Wxi$ is finite if and only if the set of walls $\Mxi$ is finite.
\end{corollary}

\begin{proof}  This follows from Corollary~\ref{c:reflections in Wxi} and the observation that $\Wxi$ is finite if and only if it contains finitely many reflections.
\end{proof}

For $w \in W$ and $\xi \in \bTX$, denote by $\Inv_{\Mxi}(w)$ the set $\Inv(w) \cap \Mxi$.  The next lemma is used in the proof of Theorem~\ref{t:partial order Wxi}.

\begin{lemma}\label{L:InvWxi}
Suppose $w \in W$.  Then there exists $w' \in \Wxi$ such that $\Inv_{\Mxi}(w)$ can naturally be identified with $\Inv_{\Wxi}(w')$.
\end{lemma}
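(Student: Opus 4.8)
The plan is to project $w$ into the reflection subgroup $\Wxi$ by induction on the length $\ell(w)$ in $W$. Throughout I use the bijection between walls and positive roots to regard $\Mxi$ as the positive subsystem $\Phi^+_{\Wxi} := \Phi_{\Wxi} \cap \Phi^+$, where $\Phi_{\Wxi} = \{\alpha \in \Phi \mid s_\alpha \in \Wxi\}$; by Deodhar \cite{Deodhar} and Dyer \cite{Dyer} the pair $(\Wxi,S')$ is a Coxeter system with canonical generating set $S'$ whose positive roots are exactly $\Phi^+_{\Wxi}$, and I write $\alpha_r$ for the simple root of this system attached to $r \in S'$. As in the remark preceding the lemma, for $v \in \Wxi$ the set $\Inv_{\Mxi}(v)$ is identified with $\Inv_{\Wxi}(v)$. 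If $\Inv_{\Mxi}(w) = \Inv(w) \cap \Phi^+_{\Wxi} = \emptyset$, then $w' = e$ works, giving the base case. Otherwise the first step is to produce a $\Wxi$-simple reflection $r$ which is a left descent of $w$ in $W$, that is, with $\alpha_r \in \Inv(w)$ (equivalently $\ell(rw) < \ell(w)$).

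The existence of this descent is the main obstacle. I would choose $\beta \in \Inv(w) \cap \Phi^+_{\Wxi}$ of minimal depth in the Coxeter system $(\Wxi,S')$. If $\beta$ is $\Wxi$-simple we are done. Otherwise Dyer's description of the canonical generators, together with the depth-reduction property of $(\Wxi,S')$, furnishes a $\Wxi$-simple reflection $r$ with $r\beta \in \Phi^+_{\Wxi}$, $\operatorname{depth}_{\Wxi}(r\beta) < \operatorname{depth}_{\Wxi}(\beta)$, and $r\beta = \beta - c\,\alpha_r$ for some $c > 0$. If $\alpha_r \in \Inv(w)$ we have found our descent; if not, then $w^{-1}\alpha_r$ is a positive root while $w^{-1}\beta$ is a negative root, so $w^{-1}(r\beta) = w^{-1}\beta - c\,w^{-1}\alpha_r$ has all simple-root coordinates $\leq 0$ and is therefore a negative root. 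Hence $r\beta \in \Inv(w) \cap \Phi^+_{\Wxi}$ has smaller depth than $\beta$, contradicting minimality. This contradiction forces some $\Wxi$-simple $r$ with $\alpha_r \in \Inv(w)$, which is exactly Dyer's characterisation of minimal coset representatives reinterpreted through inversion sets.

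With such an $r$ fixed, set $v = rw$, so that $\ell(v) < \ell(w)$ and by induction $\Inv_{\Mxi}(v) = \Inv_{\Wxi}(w'')$ for some $w'' \in \Wxi$. Since $s_{\alpha_r}$ fixes $\Phi^+_{\Wxi} \setminus \{\alpha_r\}$ setwise, sends $\alpha_r$ to $-\alpha_r$, and $\alpha_r \in \Inv(w)$, a direct computation of which $\alpha \in \Phi^+_{\Wxi}$ satisfy $(rw)^{-1}\alpha \in -\Phi^+$ yields
$$
\Inv_{\Mxi}(rw) = s_{\alpha_r}\bigl(\Inv_{\Mxi}(w) \setminus \{\alpha_r\}\bigr),
$$
in particular $\alpha_r \notin \Inv_{\Mxi}(rw) = \Inv_{\Wxi}(w'')$. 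This is precisely the recursion governing $\Wxi$-inversion sets under left multiplication by the $\Wxi$-simple reflection $r$, so the element $w' := r w'' \in \Wxi$ satisfies $\Inv_{\Wxi}(w') = s_{\alpha_r}\,\Inv_{\Wxi}(w'') \sqcup \{\alpha_r\} = \Inv_{\Mxi}(w)$, completing the induction. The delicate point throughout is that the reflections in $\Rxi$ are generally non-simple in $W$, so the clean left-multiplication behaviour of inversion sets must be transported from the intrinsic Coxeter structure of $(\Wxi,S')$ via Lemma \ref{L:Dyer} rather than read off directly in $W$.
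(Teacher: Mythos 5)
Your proof is correct, and the key steps check out: the minimal-depth argument producing a $\Wxi$-simple left descent of $w$ is valid (the coordinate computation showing $w^{-1}(r\beta) = w^{-1}\beta - c\,w^{-1}\alpha_r$ must be a negative root is exactly right), and the two inversion-set recursions you combine are correctly applied. However, your route is genuinely different from the paper's. The paper inducts on $k = |\Inv_{\Mxi}(w)|$ and argues geometrically: it picks a wall $M \in \Inv_{\Mxi}(w)$ that is extremal near $w$, in the sense that no other wall of $\Mxi$ separates $w$ from $M$, takes $v$ with $\Inv_{\Mxi}(v) = \Inv_{\Mxi}(w) \setminus \{M\}$, inductively finds $v' \in \Wxi$ with matching inversions, and sets $w' = rv'$ where $r$ is the reflection in $M$; the inductive step is justified purely by the separation property (no wall of $\Mxi$ separates $v'$ from $M$), and crucially $r$ need not be a canonical generator of $\Wxi$. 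You instead induct on $\ell(w)$ and peel off canonical (Dyer-simple) generators of $\Wxi$ at the identity end, which forces you to first prove that some canonical generator is a $W$-descent of $w$ --- this descent-existence step is the bulk of your work and is precisely what the paper's extremal-wall choice sidesteps. The trade-off: the paper's argument is shorter and needs no root combinatorics, though it leaves its inductive step partly as an assertion (``this follows easily''); your argument silently imports nontrivial facts from Dyer's theory (that the positive roots of the subsystem are nonnegative combinations of the canonical simple roots, and the depth-reduction lemma in that possibly non-standard representation), but in exchange it is essentially a self-contained reconstruction of the theory of minimal coset representatives for reflection subgroups: your descent step amounts to writing $w = w'u$ with $w' \in \Wxi$ and $u$ of minimal length in the coset $\Wxi w$, characterised by $\Inv(u) \cap \Mxi = \emptyset$, and it delivers $w'$ together with an explicit reduced word in the canonical generators.
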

\begin{proof}
 Suppose first that $w$ lies in  $\Wxi$.  Then it follows from Lemma \ref{L:Dyer} that $\Inv_{\Mxi}(w)$ can naturally be identified with the inversion set $\Inv_{\Wxi}(w)$, when $w$ is considered as an element of $\Wxi$.  So take $w' = w$.

Now suppose that $w$ is not in $W(\xi)$.  
We proceed by induction on $k = |\Inv_{\Mxi}(w)|$.  For the base case $k = 0$, take $w'$ trivial.  For $k >0$, pick a wall $M \in \Inv_{\Mxi}(w)$ so that no other walls in $\Mxi$ separate $w$ from $M$.  Let the reflection corresponding to $M$ be denoted $r \in \Wxi$.  Let $v \in W$ be such that $\Inv_{\Mxi}(v) = \Inv_{\Mxi}(w) \setminus \{M\}$, and by induction we suppose that we have found $v' \in \Wxi$ such that $\Inv_{\Mxi}(v)=\Inv_{\Wxi}(v')$.  We claim that $w' = rv'$ works, where geometrically $w'$ is obtained from $v'$ by reflecting in $M$.  This follows easily from the observation that no wall in $\Mxi$ separates $v'$ from $M$, which in turn follows from the same property of $v$.
\end{proof}

We now prove Theorem~\ref{t:partial order Wxi}, which establishes Theorem~\ref{thm:Wxi intro} in the introduction.  

\begin{theorem}\label{t:partial order Wxi}  Suppose $\Wxi$ is nontrivial.  Then the set of equivalence classes of infinite reduced words $\ii$ such that $C(\xi)= \bTXi$ is in bijection with the set of elements of $\Wxi$, and the partial order on this set of equivalence classes of infinite reduced words corresponds naturally to the weak partial order on $\Wxi$.
\end{theorem}

\begin{proof}  Let $w$ be an element of $\Wxi$.  The following lemma and its corollaries will allow us to construct an infinite reduced word $\ii$ such that $C(\xi) = \bTXi$ and $\Inv(\ii) = \Inv_{\Mxi}(w) \sqcup \Inv(\xi)$.

\begin{lemma}\label{l:p for Wxi}  Let $p_0$ be in the interior of the chamber of $X$ corresponding to $w$.
Then there is a point $p$ on the geodesic ray joining $p_0$ to $\xi$, such that every wall intersecting the geodesic segment $[0,p]$ either has $\xi$ in its boundary, or separates $0$ from $\xi$.
\end{lemma}

\begin{proof}  Let $M$ be a wall intersecting the geodesic segment $[0,p_0]$.  If $M$ does not intersect the geodesic ray $[p_0,\xi)$ and $M$ does not separate $0$ from $\xi$, then since neither of the geodesic rays $[p_0,\xi)$ and $[0,\xi)$ cross $M$, and $M$ separates $0$ from $p_0$, we must have $\xi \in \bM$, that is, $M \in \Mxi$.

Now suppose the geodesic segment $[0,p_0]$ intersects a wall $M_1$ such that $M_1 \not \in \Mxi$ and $M_1$ does not separate $0$ from $\xi$.   Then by the above argument, $M_1$ intersects $[p_0,\xi)$.  So we may define $p_1$ to be a point on the geodesic ray
 $[p_0, \xi)$ such that $p_1$ is on the same side of $M_1$ as $\xi$.  Note that the geodesic segment $[0,p_1]$ intersects fewer walls that do not have $\xi$ in their boundary and do not separate $0$ from $\xi$ than does the geodesic segment $[0,p_0]$.  So repeating the
construction, we build $p_2$, $p_3$,\dots, until we obtain $p = p_k$ with the
desired property.  (Note that the initial geodesic segment from $0$ to $p_0$
intersects only finitely many walls, so this process stops.)
\end{proof} 

The following corollaries and their proofs continue notation from Lemma~\ref{l:p for Wxi}.

\begin{corollary}  Let $M$ be a wall crossed by $[0,p]$ such that $\xi \in \bM$.  Then $M \in \Inv_{\Mxi}(w)$.  Thus $\Inv(p)$ is the disjoint union of $\Inv_{\Mxi}(w)$ with a subset of $\Inv(\xi)$.
\end{corollary}

\begin{proof}  Since $M \in \Mxi$ it suffices to show that $M \in \Inv(w)$, that is, that $M$ is crossed by $[0,p_0]$.  But the geodesic ray $[p_0,\xi)$ does not cross $M$, so $p_0$ and $p$ are on the same side of $M$.  Since $M$ separates $0$ from $p$ the result follows.
\end{proof}

\begin{corollary}  The set of walls crossed by $[0,p]$, together with the set of walls crossed by $[p,\xi)$, is the disjoint union of $\Inv_{\Mxi}(w)$ with $\Inv(\xi)$ (and no wall is crossed twice).
\end{corollary}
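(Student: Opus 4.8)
The plan is to write $A$ for the set of walls crossed by $[0,p]$ (equivalently $\Inv(p)$) and $B$ for the set of walls crossed by $[p,\xi)$, and to prove the two assertions $A \cap B = \emptyset$ (no wall is crossed twice) and $A \cup B = \Inv_{\Mxi}(w) \sqcup \Inv(\xi)$. The preceding corollary already supplies the decomposition $A = \Inv_{\Mxi}(w) \sqcup S$, where $S := \Inv(p) \cap \Inv(\xi) \subseteq \Inv(\xi)$, so the remaining work is to pin down $B$ and to check disjointness. Throughout I would assume, as we may by choosing $p$ in the interior of a chamber, that $p$ lies on no wall, so that each wall places $0$, $p$ and $\xi$ strictly on one of its sides.

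First I would establish $A \cap B = \emptyset$ by a side analysis on a wall $M \in A$. If $M \in \Inv_{\Mxi}(w)$ then $\xi \in \bM$ and $p \notin M$, so Lemma \ref{l:rays and walls}(3) shows $[p,\xi)$ never meets $M$. If instead $M \in S \subseteq \Inv(\xi)$, then $\xi \notin \bM$ and $M$ separates $0$ from both $p$ and $\xi$; hence $p$ and $\xi$ lie on the same side of $M$, so again $[p,\xi)$ does not cross $M$. Either way $M \notin B$.

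Next I would identify $B$ and show $\Inv(\xi) = S \sqcup B$. For $M \in B$, Lemma \ref{l:rays and walls}(2) gives $\xi \notin \bM$; since $M \notin A$ by the previous step, $0$ and $p$ lie on the same side of $M$, while $p$ and $\xi$ lie on opposite sides, so $M$ separates $0$ from $\xi$ and $B \subseteq \Inv(\xi)$. Conversely, for $M \in \Inv(\xi)$ the points $0$ and $\xi$ are on opposite sides of $M$, and according to the side on which $p$ lies, either $[0,p]$ crosses $M$ --- whence $M \in A$, and since $\xi \notin \bM$ forces $M \notin \Mxi$ we get $M \in S$ --- or $[p,\xi)$ crosses $M$, giving $M \in B$. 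As $S \subseteq A$ is disjoint from $B$, this yields $\Inv(\xi) = S \sqcup B$.

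Combining, $A \cup B = \Inv_{\Mxi}(w) \sqcup S \sqcup B = \Inv_{\Mxi}(w) \sqcup \Inv(\xi)$, a disjoint union in which no wall is crossed twice, as claimed. The argument is essentially bookkeeping about which side of each wall the three points $0$, $p$, $\xi$ occupy; the only content-bearing inputs are the decomposition of $\Inv(p)$ from the preceding corollary and Lemma \ref{l:rays and walls}(2)--(3), which together make the walls of $\Mxi$ invisible to the ray $[p,\xi)$. I expect the main subtlety to be the converse inclusion $\Inv(\xi) \subseteq S \cup B$, where one must correctly correlate the position of $p$ relative to a wall separating $0$ from $\xi$ with whether that wall is charged to $[0,p]$ or to $[p,\xi)$.
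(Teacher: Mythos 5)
Your proof is correct and takes essentially the same route as the paper's much terser two-sentence argument: the paper's ``by construction, no wall crosses both $[0,p]$ and $[p,\xi)$'' compresses your disjointness step, and its second sentence is exactly your case analysis giving $\Inv(\xi) \subseteq A \cup B$. Your explicit verification that $B \subseteq \Inv(\xi)$, and your explicit (and genuinely needed) assumption that $p$ lies on no wall, are details the paper leaves implicit.
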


\begin{proof}  By construction, no wall crosses both $[0,p]$ and $[p,\xi)$.  If $M \in \Inv(\xi)$ and $M$ is not crossed by $[0,p]$, then $M$ separates both $0$ and $p$ from $\xi$, and so $M$ is crossed by $[p,\xi)$.
\end{proof}

\noindent Continuing with the proof of Theorem~\ref{t:partial order Wxi}, consider the sequence of chambers visited by $[0,p] \cup [p,\xi)$, where $p$ is constructed as in Lemma~\ref{l:p for Wxi}.  Then similarly to the proof of Lemma~\ref{l:defn}(2), this sequence of chambers gives rise to an infinite reduced word $\ii$ such that $\xi \in \bTXi $, hence $C(\xi) = \bTXi$, and $\Inv(\ii) = \Inv_{\Mxi}(w) \sqcup \Inv(\xi)$.

Conversely, if $\ii$ is an infinite reduced word such that $\xi \in \bTXi$, let $p$ be a point such that the set of walls crossed by $[0,p] \cup [p,\xi)$ is equal to $\Inv(\ii)$ (and no wall is crossed twice).  Without loss of generality we may choose $p$ to be in the interior of a chamber of $X$.  Let $p$ belong to the interior of the chamber for $w' \in W$, so that $\Inv(p) = \Inv(w')$.  Using Lemma \ref{L:InvWxi}, we define $w \in \Wxi$ to be the unique element such that $\Inv_{\Mxi}(w) = \Inv(w') \cap \Mxi$.  We claim that $\Inv(\ii)$ is the disjoint union of $\Inv_{\Mxi}(w)$ with $\Inv(\xi)$.  To see this, let $M \in \Inv(\ii)$.  If $M$ intersects $[p,\xi)$ then $M$ separates $0$ from $\xi$ so $M \in \Inv(\xi)$.  If $M$ does not intersect $[p,\xi)$ then $M$ intersects $[0,p]$, so either $\xi \in \bM$ or $M \in \Inv(\xi)$.  If $M$ intersects $[0,p]$ and $\xi \in \bM$ then $M \in \Inv_{\Mxi}(w)$ by definition.  Therefore $\Inv(\ii) \subset \Inv_{\Mxi}(w) \sqcup \Inv(\xi)$.   The opposite inclusion is clear.  Thus to each $\ii$ such that $\bTXi = C(\xi)$, we have associated an element $w \in \Wxi$ such that $\Inv(\ii) = \Inv_{\Mxi}(w) \sqcup \Inv(\xi)$.

These constructions are inverses, and the natural correspondence between the partial order on equivalence classes of infinite reduced words in $\bTXi$ and the weak partial order on $\Wxi$ follows from these constructions.
\end{proof}

\subsection{The geometry of $X$ and the set $\Mxi$}\label{s:geometry Mxi}

In this section we investigate the set $\Mxi$ and its implications for the geometry of $X$.  The main results are Propositions~\ref{p:flat} and~\ref{p:constant distance}, which both give sufficient conditions for $X$ to contain an isometrically embedded Euclidean plane.  Many results from this section will be used in Section~\ref{s:classes} below.

We first note that:

\begin{lemma}\label{l:pairwise}  Any collection of pairwise intersecting walls in $X$ is finite.
\end{lemma}

\begin{proof}  This follows from Lemma 3 of Niblo--Reeves \cite{niblo-reeves}.
\end{proof}

\begin{corollary}\label{c:Mxi infinite}  For each $\xi \in \bTX$, the set $\Mxi$ is infinite if and only if there are disjoint walls $M, M' \in \Mxi$.
\end{corollary}

\begin{proof}  Since the collection of walls is locally finite in $X$, if $\Mxi$ is infinite the conclusion follows from Lemma \ref{l:pairwise}.  Conversely, suppose $M, M' \in \Mxi$ are disjoint and let $r, r'$ be the reflections fixing $M, M'$ respectively.  Then $r$ and $r'$ generate an infinite dihedral group which fixes $\xi$.  Apply this group to the wall $M$ to obtain infinitely many walls in $\Mxi$.
\end{proof}

\begin{proposition}\label{p:flat}  Suppose that $M, M'$ are disjoint walls in $\Mxi$.  Then $X$ contains an isometrically embedded Euclidean plane.
\end{proposition}

\begin{proof}
Since $M$ and $M'$ are closed subsets of the complete metric space $X$, $M$ and $M'$ are complete in the induced metric.  Hence by \cite[Proposition II.2.4(1)]{bridson-haefliger}, we may choose points $p \in M$ and $p'\in M'$ such that $d(p,p') = d(M,M')$.  Let $c=[p,\xi)$ and $c'=[p',\xi)$ and consider the function  $t \mapsto d(c(t), c'(t))$.  Since the geodesic rays $c$ and $c'$ are equivalent and $d(p,p')$ realises the distance $d(M,M')$, this function is bounded and non-decreasing.  As $X$ is $\CAT(0)$, this function is convex.  Hence the function $t \mapsto d(c(t), c'(t))$ is constant.

Now for any $t > 0$, consider the four points $p$, $p'$, $q = c(t)$ and $q' = c'(t)$.  Since the wall $M'$ is a convex and complete subset of $X$, and $d(p,p')$ realises the distance $d(p,M')$, by \cite[Proposition II.2.4]{bridson-haefliger} the Alexandrov angle between the geodesic segments $[p,p']$ and $[p',q']$ is $\geq \pi/2$.  By considering $d(p',M)$ instead, we obtain that the Alexandrov angle between the geodesic segments $[p',p]$ and $[p,q]$ is also $\geq \pi/2$.  But since $d(q,q') = d(p,p')=d(M,M')$, the same argument shows that the Alexandrov angles between $[q,q']$ and the segments $[q,p]$ and $[q',p']$ are also $\geq \pi/2$.  Thus by the Flat Quadrilateral Theorem \cite[II.2.11]{bridson-haefliger}, each of these four angles is equal to $\pi/2$, and moreover the convex hull of the four points $p,p',q,q'$ is isometric to the convex hull of a rectangle in the Euclidean plane.  Therefore the convex hull of $c$ and $c'$ is a flat half-strip which is orthogonal to both $M$ and $M'$.

Let $r$ and $r'$ be the reflections fixing $M$ and $M'$ respectively.  Since $M$ and $M'$ are disjoint these reflections generate an infinite dihedral group.  The union of the images of the flat half-strip bounded by $c$ and $c'$ under the action of $\langle r, r'\rangle$ is a flat half-plane in $X$ (which is orthogonal to all of the images of $M$ and $M'$ under this action).  Hence for all $n \geq 1$ there is an isometric embedding into $X$ of the ball of radius $n$ centred at the origin in the Euclidean plane.  Thus by \cite[Lemma 9.34]{bridson-haefliger}, there is an isometric embedding of the Euclidean plane into $X$.
\end{proof}

\begin{corollary}\label{c:infinitexi}
If $\Mxi$ is infinite then $X$ contains an isometrically embedded Euclidean plane.
\end{corollary}

Recall that by Proposition~\ref{p:geodesic extension}, $X$ has the geodesic extension property if, for instance, $W$ is one-ended.  The next two lemmas and also Proposition~\ref{p:constant distance} will be used in Section~\ref{s:hyperbolic}, when we consider the case that $W$ is word hyperbolic, although they do not require hyperbolicity.

\begin{lemma}\label{l:Mxi intersection}  If $\Mxi$ is finite and nonempty then $\bigcap_{M \in \Mxi} M$ is nonempty.  If in addition $X$ has the geodesic extension property, then this intersection contains a geodesic line.
\end{lemma}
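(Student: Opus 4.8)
The plan is to first show the intersection is nonempty by exhibiting a common fixed point of the associated reflection group, and then to upgrade this point to a full geodesic line using the extension hypothesis.

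First I would note that, since $\Mxi$ is finite and nonempty, Corollary~\ref{c:reflections in Wxi} tells us that $\Wxi$ is a finite nontrivial group. The key observation is that a point of $X$ is fixed by $\Wxi$ precisely when it is fixed by each of the generating reflections in $\Rxi$; since the fixed set of the reflection in a wall $M$ is $M$ itself, this gives $\Fix(\Wxi) = \bigcap_{r \in \Rxi}\Fix(r) = \bigcap_{M \in \Mxi} M$. Now $\Wxi$ is a finite group acting by isometries on the complete $\CAT(0)$ space $X$, so every orbit is bounded and hence $\Wxi$ has a fixed point by the Bruhat--Tits fixed point theorem (Corollary II.2.8 of \cite{bridson-haefliger}). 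This fixed point lies in $\bigcap_{M\in\Mxi} M$, which is therefore nonempty.

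For the second statement, I would fix a point $p \in \bigcap_{M \in \Mxi} M$ and recall that $\xi \in \bM$ for every $M \in \Mxi$. By Lemma~\ref{l:rays and walls}(1), the geodesic ray $[p,\xi)$ is then contained in each wall $M$, and hence in $\bigcap_{M\in\Mxi} M$. Using the geodesic extension property, I would extend $[p,\xi)$ to a geodesic line $c \colon \R \to X$ with $c([0,\infty)) = [p,\xi)$. It remains to check that the \emph{entire} line $c$ stays in each wall $M$. For this, let $r$ be the reflection fixing $M$; since $r$ fixes the ray $[p,\xi)$ pointwise, the line $r \circ c$ is a geodesic line agreeing with $c$ on $[0,\infty)$. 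As $X$ is uniquely geodesic, two geodesic lines agreeing on a subray must coincide, so $r\circ c = c$, whence $c$ is fixed pointwise by $r$ and $c \subset M$. Intersecting over all $M \in \Mxi$ shows $c \subset \bigcap_{M\in\Mxi} M$.

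The first part is essentially a direct application of the finite-group fixed point theorem, so I expect no real difficulty there. The step needing care is the last one: verifying that the backward extension of the ray does not leave the walls. The reflection-plus-uniqueness argument above handles this cleanly, and it parallels the use of the local structure of walls in the proof of Proposition~\ref{p:constant distance}; this is the step I would expect to be the crux of the argument.
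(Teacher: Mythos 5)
Your first step and the beginning of your second step are correct and coincide with the paper's own argument: Corollary~\ref{c:reflections in Wxi} gives finiteness of $\Wxi$, the Bruhat--Tits fixed point theorem gives a point $p$ with $p\in\Fix(\Wxi)=\bigcap_{M\in\Mxi}M$, and Lemma~\ref{l:rays and walls}(1) places the ray $[p,\xi)$ inside every $M\in\Mxi$.

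The final step, however, contains a genuine gap, and it sits exactly at the point you yourself identify as the crux. The assertion ``as $X$ is uniquely geodesic, two geodesic lines agreeing on a subray must coincide'' is false: uniqueness of geodesics between pairs of points does not imply uniqueness of geodesic \emph{extensions} in a $\CAT(0)$ space. A tree with a vertex of valence at least $3$ is uniquely geodesic and can have the geodesic extension property, yet two geodesic lines may share a ray and then branch apart; such branching occurs in actual Davis complexes (the Davis complex of $\Z/2 * \Z/2 * \Z/2$ is exactly such a tree, and higher-dimensional branching occurs whenever $X$ is not a manifold, i.e.\ whenever more than two chambers meet along a mirror). Note moreover that the equality $r\circ c=c$ you want to deduce is literally equivalent to the conclusion $c\subset\Fix(r)=M$, so once the false uniqueness principle is removed, your reflection argument yields nothing: $c$ and $r\circ c$ agreeing on $[0,\infty)$ is consistent with them diverging on $(-\infty,0)$, with $c$ exiting $M$ on one side and $r\circ c$ on the other. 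What actually prevents this is the \emph{local structure of walls} in the Davis complex, which is what the paper's proof (and likewise the proof of Proposition~\ref{p:constant distance}, which you cite) invokes: at any point $x\in M$ the link of $x$ decomposes compatibly with the finite Coxeter group stabilising $x$, in such a way that every direction at angle $\geq\pi$ from a direction tangent to $M$ is again tangent to $M$; hence a geodesic that runs inside $M$ cannot leave $M$ through any of its points. Some argument of this kind --- special to walls, and false for arbitrary convex sets or arbitrary fixed sets of involutive isometries (the tree example above already defeats it) --- is needed to close your proof.
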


\begin{proof}  Since $\Wxi$ is finite and $X$ is $\CAT(0)$, there is a point $p \in X$ which is fixed by all $w \in \Wxi$.  In particular, $p$ is fixed by all reflections in $\Rxi$, hence $p$ is contained in every wall $M \in \Mxi$.  That is, $\cap_{M \in \Mxi} M$ is nonempty.  Now consider the geodesic ray $[p,\xi)$.  By Lemma \ref{l:rays and walls}, this geodesic ray is contained in every $M \in \Mxi$.  Let $\eta$ be the other endpoint of a geodesic extension of the ray $[p,\xi)$.  Then by the local properties of walls, the ray $[p,\eta)$ is also contained in every $M \in \Mxi$, so the intersection of the walls in $\Mxi$ is nonempty.  If $X$ has the geodesic extension property then by extending $[p,\xi)$ we obtain that $\cap_{M \in \Mxi} M$ contains a geodesic line.\end{proof}

\begin{lemma}\label{l:distinct bM}  Assume $X$ has the geodesic extension property.  If $M$ and $M'$ are distinct walls such that $M \cap M' \neq \emptyset$, then $\bM \neq \bM'$.
\end{lemma}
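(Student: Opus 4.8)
The plan is to argue by contradiction: assume $\partial M = \partial M'$ and manufacture a single geodesic ray that simultaneously detects $M$ and lands in $\partial M'$, forcing it into $M'$ and thereby producing a point of $M'$ that was chosen to lie outside $M'$. First I would isolate the two points I need. Since $M \neq M'$ their symmetric difference is nonempty, and as $M$ and $M'$ play symmetric roles (both are walls satisfying the same hypotheses, and the desired conclusion $\partial M \neq \partial M'$ is symmetric), after possibly interchanging them I may fix a point $x \in M \setminus M'$; and since $M \cap M' \neq \emptyset$ I may fix a point $p \in M \cap M'$. Because each wall is convex, the segment $[p,x]$ lies in $M$, and the hypothesis that $X$ has the geodesic extension property lets me extend it to a geodesic ray $c\colon[0,\infty) \to X$ with $c(0) = p$ and $c(t_0) = x$ for some $t_0 > 0$, so that $x$ is now an interior point of $c$.

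The key step is to show that this ray stays inside $M$. Let $r$ be the reflection with $M = \Fix(r)$. Since $r$ fixes $[p,x]$ pointwise, the reflected ray $r \circ c$ is again a geodesic ray emanating from $p$ that agrees with $c$ on $[0,t_0]$. As geodesics in a $\CAT(0)$ space do not branch \cite{bridson-haefliger}, two geodesic rays issuing from $p$ that agree on a nondegenerate initial segment must coincide; hence $r \circ c = c$, so $c$ is pointwise fixed by $r$ and therefore $c \subseteq \Fix(r) = M$. With this in hand, set $\xi = c(\infty) \in \partial M$. By the standing assumption $\xi \in \partial M'$ as well, and since $p \in M'$, Lemma \ref{l:rays and walls}(1) forces the unique geodesic ray $[p,\xi)$ to lie in $M'$. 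But $c$ is exactly that ray, so $c \subseteq M'$, and in particular $x = c(t_0) \in M'$, contradicting $x \in M \setminus M'$.

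The main obstacle is precisely the second step. The geodesic extension property only produces a ray inside the ambient space $X$, and a priori nothing confines it to the wall $M$; indeed, a ray through $x$ could in principle leave $M$ immediately. The resolution is that $r$ fixes the initial portion $[p,x]$ of the ray, so non-branching of $\CAT(0)$ geodesics pins the entire ray to the fixed set of $r$. This is also exactly where the hypothesis on geodesic extension is essential: without it one could not guarantee a ray through a point of $M \setminus M'$ running out to the boundary, and the conclusion can genuinely fail (a ``free-face'' direction of $M$ pointing into $M'$ would break the argument). I expect the remaining bookkeeping — convexity of walls, uniqueness of the ray $[p,\xi)$, and the symmetric roles of $M$ and $M'$ — to be routine.
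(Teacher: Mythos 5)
Your overall strategy is the same as the paper's: pick $p \in M \cap M'$ and $x \in M \setminus M'$ (after possibly swapping $M$ and $M'$), extend $[p,x]$ to a ray $c$, argue that $c$ stays in $M$ so that $\xi = c(\infty) \in \bM = \bM'$, then invoke Lemma \ref{l:rays and walls}(1) to force $[p,\xi) \subset M'$, contradicting $x \notin M'$. The problem is the justification of the key step, which you yourself identify as the crux: the claim that ``geodesics in a $\CAT(0)$ space do not branch'' is false. Metric trees are $\CAT(0)$, and in a tripod two geodesic rays issuing from a point on one leg agree along that leg and then diverge into different branches; more to the point, $\CAT(0)$ polyhedral complexes such as the Davis complex can have branching geodesics (branching occurs exactly when some link contains more than one point at distance $\geq \pi$ from a given direction, and the geodesic extension property does nothing to preclude this --- regular trees have the geodesic extension property). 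Consequently $r \circ c = c$ does not follow: the reflection trick only shows that if $c$ leaves $M$ into one open half-space, then $r \circ c$ is a \emph{second} geodesic extension of $[p,x]$ leaving into the other half-space; it exhibits a potential branch point rather than ruling one out. Uniqueness of geodesics between two points (which is what $\CAT(0)$ actually provides) does not apply, since $c$ and $r \circ c$ have different endpoints; uniqueness of the ray from $p$ to a fixed boundary point also does not apply, since you do not yet know that $c$ and $r \circ c$ are asymptotic.

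What closes the gap is not general $\CAT(0)$ geometry but the local structure of walls in the Davis complex, which is exactly what the paper's (terse) proof appeals to: ``by the local properties of walls, $\xi \in \bM$.'' At any point $y \in M = \Fix(r)$, the space of directions at $y$ decomposes as a spherical join of a round sphere (coming from the finite reflection group generated by the walls through $y$, in which the directions along $M$ form a great subsphere) with a complex on which $r$ acts trivially. A computation with the join metric then shows that every direction at distance $\geq \pi$ from a direction tangent to $M$ is again tangent to $M$ (in the round factor this is just the fact that the antipode of a point of a great subsphere lies in that subsphere). Hence any geodesic extension of a geodesic lying in $M$ remains in $M$; this wall-specific fact, and not non-branching of $\CAT(0)$ spaces, is what your proof needs. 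With that substitution, the rest of your argument is correct and coincides with the paper's.
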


\begin{proof}  Let $p \in M \cap M'$.  Since $M$ and $M'$ are distinct walls, we may choose $q \in M \setminus M'$.  Then as $X$ has the geodesic extension property, the geodesic segment $[p,q]$ extends to a geodesic ray $[p,\xi)$.  By the local properties of walls, $\xi \in \bM$.  If $\xi \in \bM'$ as well, then by Lemma \ref{l:rays and walls} the entire geodesic ray $[p,\xi)$ is contained in $M'$, a contradiction.
\end{proof}

\begin{proposition}\label{p:constant distance}  Assume $X$ has the geodesic extension property.  Let $M$ and $M'$ be disjoint walls such that $\bM = \bM'$ is nonempty.  Then $M$ and $M'$ are constant distance apart and $X$ contains an isometrically embedded Euclidean plane which is orthogonal to both $M$ and $M'$.   \end{proposition}

\begin{proof}  Let $\xi \in \bM = \bM'$.  As in the proof of Proposition \ref{p:flat}, we may choose points $p \in M$ and $p' \in M'$ such that $d(p,p') = d(M,M')$, and the geodesic rays $[p,\xi)$ and $[p',\xi)$ then bound a flat half-strip in $X$.

Now since $X$ has the geodesic extension property, there exists a geodesic line $c:\R \to X$ such that $c([0,\infty)) = [p,\xi)$.  By the local structure of walls, the entire image of $c$ must be contained in $M$.   Put $\eta = c(-\infty)$.  Then $\eta \in \bM = \bM'$.  Since $d(p,p') = d(M, M')$, the geodesic rays $[p,\eta)$ and $[p',\eta)$ also bound a flat half-strip.

Let $r'$ be the reflection which fixes the wall $M'$.  Then $r'c$ is a geodesic line which is contained in the wall $r'M$ and passes through the point $r'p$.  The geodesic ray $[r'p,\xi)$ is constant distance from $[p',\xi)$, and the geodesic ray $[r'p,\eta)$ is constant distance from $[p',\eta)$.  Thus the geodesic line $r'c$ is at uniformly bounded distance from the geodesic line $c$.  Therefore by the Flat Strip Theorem \cite[II.2.13]{bridson-haefliger}, the convex hull of $c$ and $r'c$ is isometric to a flat strip.  Note that the width of this flat strip is $d(p,r'p) = 2d(p,p')$.

We have so far shown that there is a geodesic line $c$ in $M$, and passing through $p$, which is at constant distance from the geodesic line $r'c$ in $r'M$.

Now let $x$ be any point in $M$ which is distinct from $p$.  Then by the geodesic extension property again, the geodesic segment $[p,x]$ may be extended to a geodesic line $c_x$ passing through $p$, and moreover by the local structure of walls the line $c_x$ is contained in $M$.  The above argument may be repeated with the endpoints of $c_x$ to show that $c_x$ is at constant distance $2d(p,p')$ from the geodesic line $r'c_x$ in $r'M$.  In particular, $x$ is at distance $2d(p,p')$ from $r'M$.  Therefore we have that the walls $M$ and $r'M$ are at constant distance $2d(p,p')$.

Now for any point $x' \in M'$, we have $d(x',M) = d(x',r'M)$.  Thus the wall $M'$ is exactly halfway in between $M$ and $r'M$, and we conclude that $M$ and $M'$ are at constant distance $d(p,p')$.

To obtain an isometrically embedded Euclidean plane which is orthogonal to both $M$ and $M'$, keep on reflecting the flat strip between lines $c$ and $r'c$.
\end{proof}

\section{Special cases}\label{s:classes}

We conclude by discussing numerous special cases and examples.  These include the cases that $W$ is irreducible affine (Section~\ref{s:affine}), $W$ is reducible or virtually abelian (Section~\ref{s:reducible}), $W$ has two ends (Section~\ref{s:2 ends}), $W$ has infinitely many ends (Section~\ref{s:infinite ends}) and $W$ is word hyperbolic (Section~\ref{s:hyperbolic}), and the case that $X$ has isolated flats (Section~\ref{s:isolated}).  In particular, in Section~\ref{s:hyperbolic} we prove Theorem~\ref{thm:hyp} of the introduction.

\subsection{Irreducible affine Coxeter groups}\label{s:affine}

Infinite reduced words for affine Weyl groups were studied by Cellini--Papi \cite{Cellini-Papi} and Ito \cite{Ito} from a root system point of view.  The limit weak partial order on equivalence classes of infinite reduced words was studied in \cite{Lam-Pylyavskyy}.  We summarise the results here in the notation of the present paper.

Let $(W,S)$ be an irreducible affine Coxeter group with corresponding finite Weyl group $W_\fin$.  Then the boundaries of walls $\partial M \subset \bTX$ are in bijection with the reflections of $W_\fin$.  The pieces of the partition $\bTX = \sqcup C(\xi)$ are in bijection with the faces of the Coxeter arrangement of $W_\fin$ (with the origin of the Coxeter arrangement excluded).  This arrangement could also be described as the (thin) spherical building at infinity.

The pieces $C(\xi)$ which are open (and thus not contained in the closure of any other $C(\xi')$) are in bijection with $w \in W_\fin$.  For any other piece $C(\xi)$, we have that $\Mxi$ is infinite but is partitioned into finitely many parallelism classes.  These parallelism classes correspond to a collection of walls in the Coxeter arrangement of $W_\fin$.  Furthermore, $W(\xi)$ is a (possibly reducible) affine Coxeter group.

Thus the maximal elements of $(\limW, \leq)$ are in bijection with $W_\fin$.  The blocks of $\limW$ are in bijection with the faces of the Coxeter arrangement of $W_\fin$ with the origin excluded.  The limit weak order on each block $B(\ii)$ is isomorphic to the weak order on some affine Coxeter group.

\subsection{Reducible Coxeter groups and virtually abelian Coxeter groups}\label{s:reducible}

Suppose that $(W,S)$ is a reducible Coxeter system, with $S = S_1 \sqcup S_2$ so that $W = W_1 \times W_2$ where $W_k = \langle S_k \rangle$ for $k = 1,2$.  Let $I_k = \{ i \mid s_i \in S_k \}$.  Assume first that $W_1$ and $W_2$ are both infinite and let $\ii$ be an infinite reduced word in $W$.  Then exactly one of the following occurs:
\begin{enumerate}
\item $\ii$ is equivalent to an infinite reduced word consisting of a finite reduced word in $W_2$ followed by an infinite reduced word in $W_1$;
\item $\ii$ is equivalent to an infinite reduced word consisting of a finite reduced word in $W_1$ followed by an infinite reduced word in $W_2$; or
\item $\ii$ contains infinitely many elements of $I_1$ and infinitely many elements of $I_2$.
\end{enumerate}

If case (1) holds for two infinite reduced words $\ii$ and $\ii'$, denote by $w_2$ and $w_2'$ the elements of $W_2$  and by $\jj$ and $\jj'$ the infinite reduced words in $W_1$ such that $\Inv(\ii) = \Inv(w_2) \sqcup \Inv(\jj)$ and $\Inv(\ii') = \Inv(w_2') \sqcup \Inv(\jj')$.  Then $\ii \leq \ii'$ if and only if both $w_2 \leq w_2'$ in the weak partial order on $W_2$ and $\jj \leq \jj'$ in the partial order on infinite reduced words in $W_1$, with strict inequality $\ii < \ii'$ if and only if, in addition, at least one of the containments $\Inv(w_2) \subseteq \Inv(w_2')$ or $\Inv(\jj) \subseteq \Inv(\jj')$ is strict.  Similarly for the partial order on pairs of infinite reduced words for which (2) holds.  If (1) holds for $\ii$ and (2) holds for $\ii'$, then $\ii$ and $\ii'$ are clearly incomparable.

If case (3) holds then for $k = 1,2$, the infinite reduced word $\ii$ is strictly greater in the partial order than any infinite reduced word $\jj_k$ obtained by concatenating a finite initial subword of $\ii$ with the infinite reduced word consisting of all subsequent elements of $I_k$ in $\ii$.  Note that in this case the set $\Inv(\ii)$ differs from $\Inv(\jj_k)$ by infinitely many walls.

Now let us discuss the Tits boundary $\bTX$ of the Davis complex $X$ of $W$.  The Davis complex $X$ is a product $X = X_1 \times X_2$, and thus the Tits boundary is the spherical join $\bTX = \bTX_1 * \bTX_2$ (see \cite[Corollary II.9.11]{bridson-haefliger}).  We shall write $\xi = \cos\theta \xi_1 + \sin\theta \xi_2$ with $\theta \in [0,\pi/2]$ for a point of $\bTX$, where $\xi_1 \in \bTX_1$ and $\xi_2 \in \bTX_2$.  If $\xi = \xi_1+ 0$ (respectively $\xi = 0 + \xi_2$), then $C(\xi)$ lies in all the walls of $W_2$ (respectively all the walls of $W_1$), and $C(\xi) = \bTXi$ for an infinite reduced word $\ii$ of the form (1) (respectively  (2)).  A point of the form $\xi = \cos\theta \xi_1 + \sin\theta \xi_2$ with $\theta \in (0,\pi/2)$ has equivalence class given by
$$
C(\xi) = \{\cos\theta \xi'_1 + \sin\theta \xi'_2 \mid \xi'_1 \in C(\xi_1) \text{ and } \xi'_2 \in C(\xi_2)\}
$$
and we then have $C(\xi) = \bTXi$ for an infinite reduced word $\ii$ of the form (3).

If $W_2$ is finite (hence $W_1$ is infinite), then only case (1) can occur, and similarly if $W_1$ is finite only case (2) can occur.  In particular, it follows from \cite[Theorem 12.3.5]{davis-book} that a Coxeter group is virtually abelian if and only if it is a direct product of finite and affine Coxeter groups.  

It is thus straightforward to extend the results for irreducible affine Coxeter groups described in Section~\ref{s:affine} to all affine and all virtually abelian Coxeter groups.

\subsection{Coxeter groups with two ends}\label{s:2 ends}

Recall from Theorem \ref{T:davisends}(3) above that it is equivalent for $W$ to have two ends and for $W$ to be reducible of the form $W_0 \times W_1$ where $W_0 = \la s, t \ra$ is the infinite dihedral group and $W_1$ is spherical.  Any infinite reduced word in $W$ is then obtained by inserting into either $ststst\cdots$ or $tststs\cdots$ the letters of a reduced word for some $w_1 \in W_1$.  The boundary of $X$ thus consists of two points, say $\xi_+$  and $\xi_-$.  The only walls of $X$ with nonempty boundary are the finitely many walls corresponding to the reflections in $W_1$, and for each such wall $M$ we have $\bM = \{ \xi_+, \xi_-\}$.  In other words, $R(\xi_\pm)$ is equal to the set of reflections in $W_1$, and $W(\xi_\pm) = W_1$. Clearly $C(\xi_+) = \{ \xi_+ \}$ and $C(\xi_-) = \{ \xi_- \}$.  There are $2|W_1|$ equivalence classes of infinite reduced words, with $\ii \sim \jj$ if and only if $\ii$ and $\jj$ project to the same infinite reduced word in $W_0$ and $\ii$ and $\jj$ are eventually on the same side of every wall in $W_1$ (equivalently, the projections of $\ii$ and $\jj$ to $W_1$ define the same group element).  The partial order on the set of infinite reduced words $\ii$ with $\bTX(\ii) = \{ \xi_+\}$ is then just the weak partial order on $W_1$, and similarly for $\xi_-$.

\subsection{Coxeter groups with infinitely many ends}\label{s:infinite ends}

Assume now that $W$ has infinitely many ends.
In particular, the case that $W$ is virtually free and the case that $W$ is a free product of special subgroups are contained in the case that $W$ has infinitely many ends.  In this section we will use some results about graphs of groups, a reference for which is Chapter I of \cite{Serre}.

By \cite[Proposition 8.8.2]{davis-book}, every Coxeter group $W$ has a tree-of-groups decomposition (not in general unique) in which every vertex group is a spherical or a one-ended special subgroup, and every edge group is a spherical special subgroup.  More precisely, the edge groups are spherical special subgroups $W_T$ where the punctured nerve $L - \sigma_T$ is disconnected.  Since $W$ has infinitely many ends, by Theorem \ref{T:davisends} any such tree-of-groups decomposition has at least one edge.  Our analysis will depend upon the nature of the vertex and edge groups in such a decomposition.

First, the group $W$ is virtually free if and only if all vertex groups in such a tree-of-groups decomposition of $W$ are spherical \cite[Proposition 8.8.5]{davis-book}.  The Davis complex $X$ is then quasi-isometric to a tree, with $\bTX$ homeomorphic to the set of ends of this tree.  The topology on $\bTX$ is thus totally disconnected and so for each $\xi \in \bTX$, we have $C(\xi) = \{ \xi \}$.

Next, the group $W$ is a free product of spherical special subgroups if and only if all vertex groups in such a tree-of-groups decomposition of $W$ are spherical \emph{and} all edge groups are trivial.  The Davis complex $X$ then consists of infinitely many copies of the (finite) Davis complexes for the vertex groups, glued together at certain centres of chambers.  Thus all walls are finite subcomplexes of $X$ and so have empty boundary.  It follows that for each $\xi \in \bTX$, there is a unique equivalence class of infinite reduced words $\ii$ such that $\bTXi = C(\xi) = \{ \xi \}$ is a single point.

We next discuss the subcase that all vertex groups are spherical and there is some nontrivial edge group.  Fix a tree-of-groups decomposition of $W$ and let $r$ be a reflection in $W$.  Since reflections have finite order, each reflection is contained in a conjugate of at least one vertex group.  If $r$ is not contained in any conjugates of edge groups, then the wall $M = M(r)$ is contained in the finite Davis complex for a unique (conjugate of a) vertex group, and so $M$ has empty boundary.  If $r$ is contained in some (conjugate of an) edge group, then the wall $M = M(r)$ may or may not have empty boundary, as shown by the following examples.  (We do not provide a complete characterisation of the reflections $r$ such that the corresponding wall $M$ has nonempty boundary.)
\begin{examples}
\begin{enumerate}
\item Consider $W = \la s,t,u \mid s^2 = t^2 = u^2 = (st)^3 = (tu)^3 = 1 \ra$.  Then $W$ splits as the amalgamated free product $W = W_{\{s,t\}} *_{W_{ \{ t \} }} W_{\{ t,u\}} \cong S_3 *_{C_2} S_3$ and hence has a tree-of-groups decomposition consisting of an edge with vertex groups $W_{\{s,u\}}$ and $W_{\{t,u\}}$ and edge group $W_{\{t\}}$.  The reflection $t$ is obviously contained in an edge group, but the wall $M(t)$ is finite hence has empty boundary.  In fact all walls in this example have empty boundary.
\item Let $W = W_{T_1} *_{W_T} W_{T_2}$ where $T_1$ and $T_2$ are spherical and $T = T_1 \cap T_2$ is nonempty.  Let $r$ be a reflection in $W_T$.  Suppose that for $i = 1,2$ there is a reflection $r_i \in W_{T_i}\setminus W_T$ which commutes with $r$.  Then for $i = 1,2$ and every $w \in \la r_1, r_2 \ra \cong D_\infty$, the reflection $r$ is contained in $W_{T_i}^w$.  It follows from the fact that there are infinitely many distinct such conjugates $W_{T_i}^w$ and the construction of $X$ that the wall $M = M(r)$ has nonempty boundary, consisting of at least two points.  \end{enumerate}
\end{examples}

Continuing the analysis of the subcase in which all vertex groups are spherical and there is a non-trivial edge group $W_T$, let $M = M(r)$ be a wall with nonempty boundary, where the reflection $r$ is without loss of generality in $W_T$.  For each $\xi \in \bTX$ such that $M \in \Mxi$, there is then a nontrivial partial order on the set of equivalence classes of infinite reduced words $\ii$ such that $\bTXi = \xi$.  Since $X$ does not contain an isometrically embedded Euclidean plane, Corollary \ref{c:infinitexi} implies that  $\Mxi$ is finite.  Hence if $M' \neq M$ is also in $\Mxi$, with $r'$ the reflection fixing $M'$, then $r$ and $r'$ generate a finite subgroup of $W$.  Thus $\la r, r' \ra$ fixes a point $p$ in $X$, and we obtain a geodesic ray $[p,\xi)$ which is contained in both $M$ and $M'$.  It follows that $r'$ must be in $W_T$ as well.  Applying this argument to every wall in $\Mxi$, we conclude that every reflection in $\Rxi$ is contained in the edge group $W_T$.  There may however be reflections in $W_T$ which are not in $\Rxi$.

We finally consider the case that $W$ has infinitely many ends and is not virtually free, equivalently there is at least one one-ended vertex group in the tree-of-groups decomposition of $W$.  Then an infinite reduced word $\ii$ either eventually stays within a fixed conjugate $W_T^w$ of a one-ended vertex group $W_T$, or $\ii$ visits infinitely many distinct conjugates of vertex groups.  For each $w \in W$ and each one-ended vertex group $W_T$, the partial order on the set of equivalence classes of infinite reduced words that eventually stay in $W_T^w$ is the same as the partial order on the set of equivalence classes of infinite reduced words in $W_T$.  In a similar manner to the analysis when all vertex groups are spherical, if there is some nontrivial edge group, then there may also be a nontrivial partial order on equivalence classes of certain infinite reduced words $\ii$ which visit infinitely many distinct conjugates of vertex groups.

\subsection{Word hyperbolic Coxeter groups}\label{s:hyperbolic}

In this section we consider the important special case that $W$ is word hyperbolic.  In particular, we prove Theorem~\ref{thm:hyp}, stated in the introduction.  See \cite{bridson-haefliger} for the definition and basic properties of $\delta$-hyperbolic spaces and word hyperbolic groups.

We will use the following characterisation of word hyperbolic Coxeter groups, which is due to Moussong, and which appears in \cite[Corollary 12.6.3]{davis-book}.  

\begin{theorem}[Moussong]\label{t:hyperbolic} A Coxeter group $(W,S)$ is word hyperbolic if and only if there is no subset $T$ of $S$ satisfying either of the two conditions:
\begin{enumerate}
\item $W_T$ is affine of rank at least 2.
\item $(W_T,T)$ decomposes as $(W_T,T) = (W_{T'} \times W_{T''}, T' \cup T'')$ with both $W_{T'}$ and $W_{T''}$ infinite.
\end{enumerate}
\end{theorem}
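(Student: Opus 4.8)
The plan is to pass from the combinatorics of $(W,S)$ to the geometry of $X$ via the Flat Plane Theorem. Since $X$ is a proper, complete $\CAT(0)$ space on which $W$ acts properly discontinuously and cocompactly by isometries, $W$ is quasi-isometric to $X$, so word hyperbolicity of $W$ is equivalent to $\delta$-hyperbolicity of $X$. By the Flat Plane Theorem \cite{bridson-haefliger}, a proper cocompact $\CAT(0)$ space is $\delta$-hyperbolic if and only if it contains no isometrically embedded Euclidean plane. I would therefore reduce the statement to the assertion that $X$ contains a flat plane if and only if some $T \subseteq S$ satisfies condition (1) or (2).

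For the direction in which such a $T$ is assumed to exist, I would produce a flat directly. For any $T \subseteq S$ the Davis complex $X_T$ of $(W_T,T)$ sits inside $X$ as a convex, totally geodesic subcomplex \cite{davis-book}, so it suffices to find a flat plane in $X_T$. If $W_T$ is affine of rank at least $2$, then $X_T$ is the barycentric subdivision of a Coxeter complex tessellating a Euclidean space $\mathbb{E}^r$ with $r \geq 2$, which contains $\mathbb{E}^2$. If instead $(W_T,T) = (W_{T'}\times W_{T''}, T'\cup T'')$ with both factors infinite, then $X_T = X_{T'}\times X_{T''}$ with the product metric, and each infinite factor, being an unbounded proper cocompact $\CAT(0)$ space, contains a geodesic line (for instance an axis of an infinite-order element); the product of two such lines is an isometrically embedded $\mathbb{E}^2$. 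In either case $X$ fails to be $\delta$-hyperbolic.

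The substantial direction is the converse. Starting from an isometrically embedded flat $F \cong \mathbb{E}^2 \subset X$, I would first analyse the walls crossing $F$. Each such wall, being closed, convex and two-sided, meets $F$ in a geodesic line, so these walls cut $F$ along a locally finite line arrangement. Cocompactness of the $W$-action, together with the finiteness of the number of chamber types, forces the cells of this arrangement to be uniformly bounded, whence the arrangement has at least two transverse directions and, overall, only finitely many directions. In each parallel family the walls are pairwise disjoint and generate infinite dihedral groups, as in Proposition~\ref{p:flat} and the Parallel Wall Theorem, whereas any pairwise transverse family corresponds to pairwise intersecting walls and is therefore finite by Corollary~\ref{c:pairwise}. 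I would then let $\Gamma \leq W$ be the reflection subgroup generated by the reflections in walls crossing $F$ and preserving $F$ setwise, and show that $\Gamma$ acts cocompactly on $F$ as a two-dimensional Euclidean reflection group.

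The hard part will be the final passage from $\Gamma$ to a standard parabolic subgroup. By the theorem of Deodhar \cite{Deodhar} and Dyer \cite{Dyer}, $\Gamma$ is itself a Coxeter group, and its cocompact action on $\mathbb{E}^2$ identifies it, via the classification of two-dimensional Euclidean reflection groups, as either an irreducible affine group of rank $2$ or a product of two infinite reflection groups. The genuine difficulty --- and the technical heart of Moussong's theorem --- is to show that the parabolic closure of $\Gamma$ in $W$ is conjugate to a standard parabolic $W_T$ carrying the same affine or reducible structure; this entails controlling the directions of the walls crossing $F$ against the fixed local geometry of a chamber and invoking Davis's analysis of the vertex links (cf.\ Theorem~\ref{T:davisends}). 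Once this is in place, the type of $W_T$ delivers condition (1) or condition (2), and the desired equivalence follows.
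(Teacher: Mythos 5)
First, note that the paper does not prove this statement at all: it is quoted as Moussong's theorem with a citation to \cite[Corollary 12.6.3]{davis-book}, so there is no internal proof to compare against. Your reduction via the Flat Plane Theorem and your treatment of the forward direction (a subset $T$ satisfying (1) or (2) yields an isometrically embedded Euclidean plane inside the convex subcomplex $X_T$, hence $X$ is not $\delta$-hyperbolic) are sound, and indeed this is the same kind of argument the paper itself uses later in Proposition \ref{p:W hyp locally finite} and Proposition \ref{p:flat}.

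The converse direction, however, contains a genuine gap, in two places. First, your group $\Gamma$ is generated by ``reflections in walls crossing $F$ and preserving $F$ setwise,'' but such reflections generically do not exist: if a wall $M$ meets the flat $F$ in a geodesic line, the reflection $r$ in $M$ need not map $F$ to itself --- it maps $F$ to another flat containing the line $M \cap F$. So the claimed cocompact Euclidean reflection group acting on $F$ does not get off the ground as constructed; one must instead work with the group generated by reflections in \emph{all} walls separating points of $F$ (the group $W(\mathcal{M}(F))$ of Section \ref{s:classes}) and control its parabolic closure. Second, and more fundamentally, the step you yourself flag as ``the technical heart'' --- showing that this reflection subgroup has parabolic closure conjugate to a standard parabolic $W_T$ of affine or reducible type --- is precisely the deep content of the theorem, and your proposal offers no argument for it beyond an appeal to ``controlling directions of walls'' and vertex links. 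The result you would need here is essentially Corollary 3.2 and Proposition 3.3 of \cite{caprace_isolated} (which the paper uses only in its isolated-flats section), and that is a substantial theorem in its own right, not available for free; moreover, Moussong's actual proof does not proceed by classifying flats at all, but by his lemma on $\CAT(1)$ piecewise spherical link conditions, showing that under hypotheses (1) and (2) the Davis complex admits a piecewise hyperbolic $\CAT(-1)$ metric. As written, your converse direction reduces a hard theorem to an unproved claim of comparable depth, so the proposal is incomplete.
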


We now characterise the word hyperbolicity of $W$ in terms of the sets $\Mxi$ in Proposition~\ref{p:W hyp locally finite}, in terms of the groups $\Wxi$ in Corollary~\ref{c:Wxi finite} and finally in terms of the sets $C(\xi)$ in Proposition~\ref{p:away from walls}.

\begin{proposition}\label{p:W hyp locally finite}  $W$ is word hyperbolic if and only if for all $\xi \in \bTX$, the set $\Mxi$ is finite.
\end{proposition}

\begin{proof}  Suppose there is a $\xi \in \bTX$ such that $\Mxi$ is infinite.  Then by Corollary \ref{c:Mxi infinite} and Proposition~\ref{p:flat}, the space $X$ contains an isometrically embedded Euclidean plane.  By the Flat Plane Theorem \cite[III.H.1.5]{bridson-haefliger}, this implies $X$ is not $\delta$-hyperbolic, hence $W$ is not word hyperbolic.

For the converse, let $T$ be a subset of $S$ satisfying either of the conditions in Theorem \ref{t:hyperbolic}.  We denote by $X_T$ the Davis complex for $(W_T,T)$, which embeds isometrically in the Davis complex $X$ for $(W,S)$.  It suffices to prove there is a $\xi$ in the Tits boundary of $X_T$ such that $\xi$ is in the boundary of infinitely many walls in $X_T$.

This is immediate when $W_T$ is affine, by taking $\xi$ in the boundary of a family of parallel walls in $X_T$.  If $W_T = W_{T'} \times W_{T''}$ as in (2) of Theorem \ref{t:hyperbolic}, let $\xi$ be any point in the boundary of $X_{T''} \subset X_T$ (since $W_{T''}$ is infinite, $X_{T''}$ has nonempty boundary).  Now as $W_{T'}$ is infinite, $W_{T'}$ contains infinitely many distinct reflections $r_i$.  The $r_i$ are also reflections in $W_T$, with corresponding walls say $M_i$ in $X_T$. Since $W_{T''}$ commutes with $W_{T'}$, the point $\xi$ is fixed by each of the reflections $r_i$, and so $\xi \in \bM_i$ for all $i$.
\end{proof}

\begin{corollary}\label{c:Wxi finite}  $W$ is word hyperbolic if and only if for all $\xi \in \bTX$, the group $\Wxi$ is finite.
\end{corollary}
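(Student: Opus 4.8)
The plan is to derive this corollary almost immediately by chaining together two results already established above, so there is essentially no new geometric content to develop. The two ingredients are Proposition~\ref{p:W hyp locally finite}, which characterises word hyperbolicity of $W$ by the condition that $\Mxi$ is finite for every $\xi \in \bTX$, and Corollary~\ref{c:reflections in Wxi}, which asserts (for each individual $\xi$) that $\Wxi$ is finite if and only if $\Mxi$ is finite. The strategy is simply to transport the finiteness condition from $\Mxi$ to $\Wxi$ uniformly over all $\xi$.

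First I would fix an arbitrary $\xi \in \bTX$ and invoke Corollary~\ref{c:reflections in Wxi} to record the equivalence ``$\Wxi$ finite $\iff$ $\Mxi$ finite.'' Since this equivalence holds for every single $\xi$, quantifying over all $\xi \in \bTX$ yields that the statement ``$\Wxi$ is finite for all $\xi \in \bTX$'' is logically equivalent to the statement ``$\Mxi$ is finite for all $\xi \in \bTX$.'' The only subtlety worth flagging is that one must apply the per-$\xi$ equivalence before taking the universal quantifier; because the equivalence is genuinely pointwise in $\xi$, the two universally quantified statements do indeed agree.

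Then I would apply Proposition~\ref{p:W hyp locally finite}, which identifies the latter condition (finiteness of $\Mxi$ for all $\xi$) with word hyperbolicity of $W$. Composing the two equivalences gives that $W$ is word hyperbolic if and only if $\Wxi$ is finite for every $\xi \in \bTX$, as desired.

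I do not anticipate any real obstacle here: all the hard work—the use of the Flat Plane Theorem and Moussong's criterion in Proposition~\ref{p:W hyp locally finite}, and the Deodhar--Dyer reflection subgroup theory packaged into Corollary~\ref{c:reflections in Wxi}—has already been carried out. The remaining argument is purely formal bookkeeping, and the proof can be written in a single short paragraph simply citing Proposition~\ref{p:W hyp locally finite} and Corollary~\ref{c:reflections in Wxi}.
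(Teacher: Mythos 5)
Your proof is correct and is exactly the paper's intended argument: the statement is placed as an unproved corollary immediately after Proposition~\ref{p:W hyp locally finite} precisely because it follows by combining that proposition with the pointwise equivalence ``$\Wxi$ finite $\iff$ $\Mxi$ finite'' from Corollary~\ref{c:reflections in Wxi}. Nothing further is needed.
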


\begin{proof}  This follows from Corollary~\ref{c:Rxi finite} and Proposition~\ref{p:W hyp locally finite}.
\end{proof}

\begin{proposition}\label{p:away from walls} $W$ is word hyperbolic if and only if for all $\xi \in \bTX$ the equivalence class $C(\xi)$ is reduced to~$\{ \xi \}$.
\end{proposition}

\begin{proof} Assume $W$ is word hyperbolic. Since $W$ is quasi-isometric to the Davis complex $X$ (equipped with its piecewise Euclidean metric), we have that $X$ is $\delta$--hyperbolic for some $\delta > 0$.  Suppose that there are $\xi \neq \xi' \in \bTX$ with $\xi' \in C(\xi)$.  Fix a basepoint $p \in X$. Then by Proposition \ref{p:partition} and Lemma \ref{c:same walls}, the geodesic rays $c = [p,\xi)$ and $c' = [p,\xi')$ cross the same set of walls.

For each wall $M$ crossed by $c$ (and thus by $c'$), denote by $\gamma_M$ the geodesic segment in $M$ which connects the points of intersection of $c$ and $c'$ with $M$, and let $\Delta_M$ be the geodesic triangle in $X$ with one vertex at $p$, with $\gamma_M$ its side not containing $p$, and with its other two sides initial subsegments of $c$ and $c'$.  Then by $\delta$--hyperbolicity of $X$, the geodesic segment $\gamma_M$ is contained in the $\delta$--neighbourhood of the union of $c$ and $c'$.

Since $\xi \neq \xi'$, the $\delta$--neighbourhoods of $c$ and $c'$ have bounded intersection.  Thus the geodesic segment $\gamma_M \subset M$ passes within bounded distance of $p$.  By the local finiteness of the collection of walls, there can be only finitely many distinct walls $M$ such that a geodesic segment contained in this wall is within bounded distance of $p$.  But $c$ (and thus $c'$) crosses infinitely many walls $M$, so we obtain a contradiction.  Thus $c$ and $c'$ are equivalent, and moreover their distance from each other is bounded in terms of $\delta$.  Hence $\xi = \xi'$.

Now assume that $W$ is not word hyperbolic, and let $W_T$ be an affine or reducible special subgroup which is an obstruction to the hyperbolicity of $W$, as in Theorem \ref{t:hyperbolic} above.  If $C(\xi) = \{ \xi \}$ then the closure $\overline{C(\xi)}$ equals $C(\xi)$, so by Proposition \ref{p:partition} and Theorem \ref{t:closure partition} above, it suffices to find infinite reduced words $\ii$ and $\jj$ such that $\jj < \ii$ (strict inequality) and $\bTX(\ii) \neq \bTX(\jj)$.  The existence of suitable pairs $\ii$ and $\jj$ in such $W_T$ follows from our discussion of the affine and reducible cases above.
\end{proof}

Since each set $\bTXi$ is an equivalence class $C(\xi)$, we have established the characterisation of word hyperbolicity of $W$ in Theorem~\ref{thm:hyp}.  Now if $W$ is hyperbolic, then as each $\bTXi$ is a singleton, we have $\overline{\bTXi} = \bTXi$ for each infinite reduced word $\ii$.  Hence by Theorem~\ref{thm:main}(1) and (4), infinite reduced words $\ii$ and $\jj$ are comparable only if they are are in the same block.  The final claim of Theorem~\ref{thm:hyp} then follows from Theorem~\ref{thm:Wxi intro}.  This completes the proof of Theorem~\ref{thm:hyp}.

\begin{example}\label{eg:3d} Let $P$ be a regular dodecahedron in $3$-dimensional (real) hyperbolic space with all dihedral angles right angles, and let $W$ be the Coxeter group generated by reflections in the codimension one faces of $P$.  Then $W$ is word hyperbolic and the Davis complex $X$ may be equipped with a piecewise hyperbolic metric so that $X$ is isometric to the induced tessellation of hyperbolic $3$-space by copies of $P$.  This tessellation is depicted in Figure \ref{fig:dodecahedrons}.

\begin{figure}
\begin{center}
\begin{overpic}[scale=0.3]{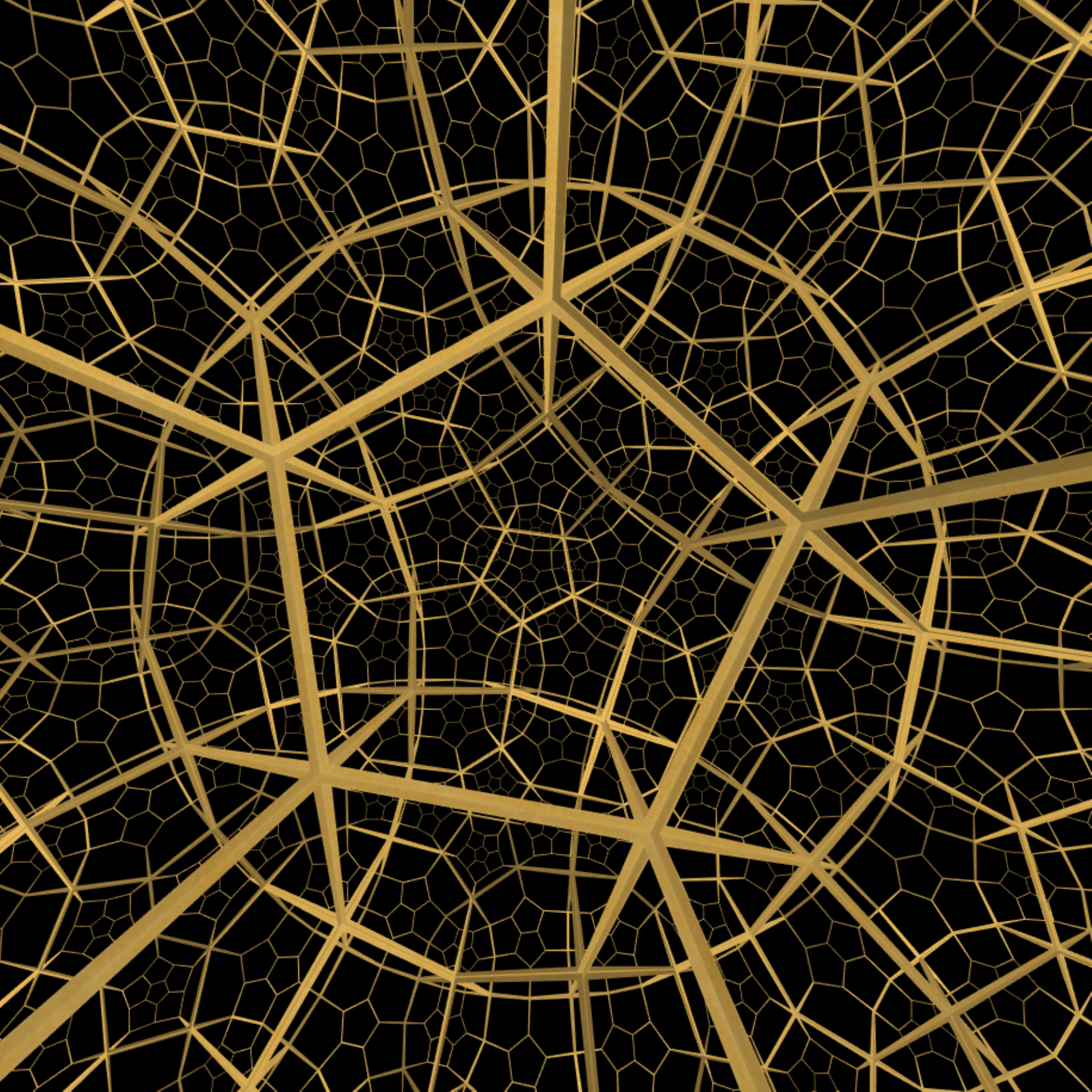}
\end{overpic}
\end{center}
\caption{The Davis complex for Example \ref{eg:3d}}
\label{fig:dodecahedrons}
\end{figure}

The Tits boundary $\bTX$ is a $2$-dimensional sphere and the distance between any two points in $\bTX$ is infinite.  Each wall of $X$ is a hyperplane tessellated by right-angled pentagons, and may be viewed as a copy of the Davis complex from Example \ref{eg:2d} in the introduction.  Each $\xi \in \bTX$ is thus contained in the boundaries of at most $2$ walls of $X$, with these walls perpendicular to each other.

Suppose $\ii$ is an infinite reduced word with $\xi = \xi(\ii)$.  If $\xi$ is in the boundary of two walls $M = M(r)$ and $M' = M'(r')$ then the block $B(\ii)$ consists of four equivalence classes of infinite reduced words, corresponding to the four elements of $W(\xi) = \langle r, r' \rangle \cong C_2 \times C_2$.  If $\xi$ is in the boundary of a unique wall $M = M(r)$ then the block $B(\ii)$ consists of two equivalence classes of infinite reduced words, corresponding to the two elements of the finite Coxeter group $W(\xi(\ii)) = \langle r \rangle$.  If $\xi$ is not in the boundary of any wall, then $B(\ii)$ contains a single equivalence class of infinite reduced words and $W(\xi(\ii))$ is trivial.
\end{example}

Given $\xi \in \bTX$, define
\[ \bMxi := \{ \bM \mid \xi \in \bM \} = \{ \bM \mid M \in \Mxi \}. \]
That is, $\bMxi$ is the set of boundaries of walls which contain the point $\xi$.  It is immediate that for all $\xi \in \bM$, $|\bMxi| \leq |\Mxi|$.

\begin{proposition}\label{p:bMxi finite}  Suppose $W$ is word hyperbolic and one-ended.  Then for all $\xi \in \bTX$, we have $|\bMxi| = |\Mxi| < \infty$.  \end{proposition}

\begin{proof}  Since $|\bMxi| \leq |\Mxi| < \infty$, it suffices to show that if $M, M'$ are distinct walls in $\Mxi$ then $\bM \neq \bM'$.  By Lemma \ref{l:Mxi intersection}, the intersection $M \cap M'$ is nonempty.  By Proposition \ref{p:geodesic extension}, since $W$ is one-ended $X$ has the geodesic extension property. Thus by Lemma \ref{l:distinct bM}, $\bM \neq \bM'$.
\end{proof}

We do not know how to characterise those $W$ such that $\bMxi$ is finite for all $\xi \in \bTX$ (noting that, by results above, if $W$ is affine or word hyperbolic then all $\bMxi$ are finite).  

%
%


\subsection{Isolated flats}\label{s:isolated}

In this section we use results of Caprace \cite{caprace_isolated} to discuss some $W$ for which $X$ is a space with isolated flats, in the sense studied by Hruska--Kleiner in \cite{hruska-kleiner}.  A \emph{flat} in $X$ is defined to be a subset which is isometric to Euclidean space of dimension $\geq 2$.

Given $T \subset S$ and a chamber $wK$ of $X$, the \emph{residue of type $T$} containing $wK$ is the subcomplex of $X$ consisting of all chambers $w'K$ such that $w^{-1}w' \in W_T$.  We denote by $R_T$ the residue of type $T$ containing the base chamber $K$.   The stabiliser in $W$ of the residue $R_T$ is $W_T$, and for any $w \in W$ the stabiliser of $wR_T$ is $W_T^w = wW_Tw^{-1}$.  Therefore every parabolic subgroup of $W$ stabilises some collection of residues.

By the construction and metrisation of $X$, there is a canonical isometric embedding of the Davis complex for $(W_T,T)$, denoted $X_T$, into $R_T$.  (In \cite{caprace_isolated}, the sets $X_T$ and $R_T$ appear to have been identified, although in \cite{caprace-haglund} and elsewhere a residue is defined as a union of chambers of $X$.) Hence for every $w \in W$, there is a canonical isometrically embedded copy of $X_T$ in the residue $wR_T$.  The action of $W_T^w$ on $wR_T$ preserves this copy of $X_T$.

Assume that $W$ is not word hyperbolic.  Then by Theorem \ref{t:hyperbolic} above, either there is some $T \subset S$ such that $W_T$ is affine of rank $\geq 2$, or there are non-spherical $T', T'' \subset S$ such that $[T',T''] = 1$.  If $W_T$ is affine of rank $\geq 2$ then $X_T$ is isometric to Euclidean space of dimension $\geq 2$.  Hence if $W_T$ is affine of rank $\geq 2$, then every residue  $wR_T$ contains a canonical flat which is preserved by the action of the affine parabolic subgroup $W_T^w$.

Let $\cT$ be the collection of all maximal subsets $T \subset S$ such that $W_T$ is affine (possibly reducible) and of rank $\geq 2$.  We make the following additional assumptions on the set $\cT$.
\begin{itemize}
\item[(RH1)] For each pair of irreducible non-spherical subsets $T', T'' \subset S$ such that $[T', T''] = 1$, there exists $T \in \cT$ such that $T' \cup T'' \subset T$.
\item[(RH2)] For all $T', T'' \in \cT$ with $T' \neq T''$, the intersection $T' \cap T''$ is spherical.
\end{itemize}
Since $W$ is not word hyperbolic, by (RH1) the set $\cT$ is nonempty.

As in the proof of Theorem A in \cite{caprace_isolated}, for each $T \in \cT$, denote by $N(W_T)$ the normaliser of $W_T$ in $W$.  Now define $\cR$ to be the collection of all residues $wR_T$ where $w$ runs through a transversal for $N(W_T)$ in $W$.  Note that we are considering representatives $w$ for the cosets of $N(W_T)$ in $W$, not for the cosets of $W_T$ in $W$, and so by Proposition 5.5 of \cite{deodhar_root} the set $\cR$ will \emph{not} contain all residues of types $T \in \cT$.  Let $\cF$ be the collection of canonical flats contained in the residues in $\cR$.

By Theorem A, Lemma 4.2 and Corollary D of \cite{caprace_isolated}, the Davis complex $X$ then has \emph{isolated flats}, meaning that the following conditions hold:
\begin{enumerate}
\item\label{i:flat in D nbhd} there is a constant $D < \infty$ such that every flat of $X$ is in the $D$--neighbourhood of some element of $\cF$; and
\item\label{i:bounded intersection} for each $0 < \rho < \infty$, there is a constant $\kappa = \kappa(\rho) < \infty$ so that for any two distinct flats $F, F' \in \cF$, the intersection of the $\rho$--neighbourhoods of $F$ and $F'$ has diameter $< \kappa$.
\end{enumerate}
(Also, $W$ is relatively hyperbolic with respect to its collection of maximal affine parabolic subgroups.)

By Theorem 1.2.1 of \cite{hruska-kleiner}, since $X$ has isolated flats the Tits boundary $\bTX$ is a disjoint union of isolated points and standard Euclidean spheres.  Note that by condition \eqref{i:flat in D nbhd} in the definition of isolated flats, for every flat $F$ in $X$ there is a flat $F' \in \cF$ such that $\partial_T F \subset \partial_T F'$.  By Theorem 5.2.5 of \cite{hruska-kleiner}, the spherical components of $\bTX$ are precisely the Tits boundaries of the flats in $\cF$.  We now analyse these spherical components.

For any flat $F$ in $X$, define $\MF$ to be the set of walls which separate points in $F$, and let $W(\MF))$ be the subgroup of $W$ generated by the set of reflections in walls in $\MF$.  Then by Corollary 3.2 of \cite{caprace_isolated}, the group $P:=\Pc(W(\MF))$ is a direct product of irreducible affine Coxeter groups.  (Here, $\Pc(W(\MF))$ denotes the parabolic closure of the group $W(\MF)$.)  Moreover, by Proposition 3.3 of \cite{caprace_isolated}, the flat $F$ is contained in some residue $R = wR_T$ whose stabiliser is $P$.   Thus in particular if $F \in \cF$ then by construction, $F$ is contained in a unique residue $R \in \cR$, and since the stabiliser of each residue in $\cR$ is a maximal affine parabolic subgroup of $W$ we have that $P = \Pc(W(\MF))$ is a maximal affine parabolic.  Further, since $P$ preserves $R$, the canonical flat $F \subset R$ intersects all of the walls corresponding to the reflections in $P$, and so $P = \Pc(W(\MF)) = W(\MF)$.

\begin{proposition}  Let $R,R'$ be any two residues in $\cR$ which have stabiliser the same maximal affine parabolic $P$, and let $F,F'$ be the corresponding flats in $\cF$.  Then:
\begin{enumerate}
\item\label{i:same flat} $F = F'$, hence $R = R'$ is the unique residue in  $\cR$ which is stabilised by $P$;
\item\label{i:MF} $\MF = \mathcal{M}(F')$ induce the same arrangement of boundaries of walls in $\partial_T F$; and
\item\label{i:wall isolated} if $M$ is any wall of $X$ such that $M \not \in \MF$ and $\bM$ intersects $\partial_T F$, then $\bM$ contains $\partial_T F$.
\end{enumerate}
\end{proposition}

\begin{proof}  By condition \eqref{i:bounded intersection} in the definition of isolated flats above, to show that $F = F'$ it suffices to show that for some $0 < \rho < \infty$, the intersection of the $\rho$--neighbourhoods of $F$ and $F'$ is unbounded.  Using the same argument as in the last paragraph of the proof of Proposition 3.3 of \cite{caprace_isolated}, we may obtain $R'$ from $R$ by applying a sequence of reflections which commute with $P$.  So we may without loss of generality assume that $R' = r.R$ for some reflection $r$ which commutes with $P$.  Fix a point $x_0 \in F$ and let $x_0' = r.x_0 \in F'$.  Choose a finite $\rho$ so that $\rho > d(x_0,x_0')$ and let $w$ be an element of infinite order in $P$.  Then for each $n \geq 1$, we have
$$\rho > d(x_0,x_0') = d(w^n.x_0', w^n.x_0) = d(w^n.r.x_0, t^n.x_0) = d(r.(w^n.x_0), w^n.x_0).$$
Since the set $\{w^n.x_0 \mid n \geq 1 \}$ is unbounded, we conclude that $F = F'$.

Part \eqref{i:MF} of this result is a formal consequence of part \eqref{i:same flat}.   For \eqref{i:wall isolated}, suppose $M$ is a wall such that $M \not \in \MF$, $\bM$ intersects $\partial_T F$ but $\bM$ does not contain $\partial_T F$.  Then $\bM$ must separate two points say $\xi$ and $\xi'$ of the sphere $\partial_T F$.  Let $c$ and $c'$ be geodesic rays from $x_0 \in R$ such that $c(\infty) = \xi$ and $c'(\infty) = \xi'$.  Since $M \not \in \mathcal{M}(F)$, the rays $c$ and $c'$ lie on the same side of $M$.  But then
their limit points $\xi$ and $\xi'$ must lie on the same side of $\bM$, a
contradiction.
\end{proof}

Thus we have that for each maximal flat $F$ in $X$ there is a (unique) flat $F' \in \cF$ such that $\partial_T F = \partial_T F'$ is a sphere component of the boundary $\bTX$, and the arrangement of boundaries of walls in this sphere component is precisely that induced by $\MF$.

On the other hand, if $\xi \in \bTX$ is an isolated point then $C(\xi) = \{ \xi \}$, since the $C(\xi)$ are connected, and we can say little else.

\subsection{Acknowledgements}

We thank the University of Sydney for travel support, \'Akos Seress for a helpful suggestion and Martin Bridson for helpful conversations.  We also thank Pierre-Emmanuel Caprace and Jean L\'ecureux for bringing to our attention the references \cite{Caprace-Lecureux}, \cite{hagen} and \cite{niblo-reeves}.  The first author thanks Pasha Pylyavskyy for collaboration which led to the line of thinking in this work.  Figures \ref{fig:pentagons} and \ref{fig:dodecahedrons} were produced using Jeff Weeks' free software KaleidoTile and CurvedSpaces, respectively.  We thank an anonymous referee for thorough reading and many helpful suggestions.

T.L. was supported by NSF grants DMS-0901111 and DMS-1160726, and by a
Sloan Fellowship.  A.T. was supported by ARC grant DP110100440 and by an Australian Postdoctoral Fellowship.

\bibliographystyle{siam}
\bibliography{refs}

\begin{thebibliography}{10}

\bibitem{Bjorner-Brenti}
{\sc A.~Bj{{\"o}}rner and F.~Brenti}, {\em Combinatorics of {C}oxeter groups},
  vol.~231 of Graduate Texts in Mathematics, Springer, New York, 2005.

\bibitem{bridson-haefliger}
{\sc M.~R. Bridson and A.~Haefliger}, {\em Metric spaces of non-positive
  curvature}, vol.~319 of Grundlehren der Mathematischen Wissenschaften
  [Fundamental Principles of Mathematical Sciences], Springer-Verlag, Berlin,
  1999.

\bibitem{brink-howlett}
{\sc B.~Brink and R.~B. Howlett}, {\em A finiteness property and an automatic
  structure for {C}oxeter groups}, Math. Ann., 296 (1993), pp.~179--190.

\bibitem{caprace_isolated}
{\sc P.-E. Caprace}, {\em Buildings with isolated subspaces and relatively
  hyperbolic {C}oxeter groups}, Innov. Incidence Geom., 10 (2009), pp.~15--31.

\bibitem{caprace-haglund}
{\sc P.-E. Caprace and F.~Haglund}, {\em On geometric flats in the {CAT}(0)
  realization of {C}oxeter groups and {T}its buildings}, Canad. J. Math., 61
  (2009), pp.~740--761.

\bibitem{Caprace-Lecureux}
{\sc P.-E. Caprace and J.~L{\'e}cureux}, {\em Combinatorial and group-theoretic
  compactifications of buildings}, Ann. Inst. Fourier (Grenoble), 61 (2011),
  pp.~619--672.

\bibitem{Cellini-Papi}
{\sc P.~Cellini and P.~Papi}, {\em The structure of total reflection orders in
  affine root systems}, J. Algebra, 205 (1998), pp.~207--226.

\bibitem{davis-book}
{\sc M.~W. Davis}, {\em The geometry and topology of {C}oxeter groups}, vol.~32
  of London Mathematical Society Monographs Series, Princeton University Press,
  Princeton, NJ, 2008.

\bibitem{deodhar_root}
{\sc V.~V. Deodhar}, {\em On the root system of a {C}oxeter group}, Comm.
  Algebra, 10 (1982), pp.~611--630.

\bibitem{Deodhar}
\leavevmode\vrule height 2pt depth -1.6pt width 23pt, {\em A note on subgroups
  generated by reflections in {C}oxeter groups}, Arch. Math. (Basel), 53
  (1989), pp.~543--546.

\bibitem{Dyer}
{\sc M.~Dyer}, {\em Reflection subgroups of {C}oxeter systems}, J. Algebra, 135
  (1990), pp.~57--73.

\bibitem{hagen}
{\sc M.~F. Hagen}, {\em The simplicial boundary of a {CAT}(0) cube complex},
  Algebr. Geom. Topol., 13 (2013), pp.~1299--1367.

\bibitem{hruska-kleiner}
{\sc G.~C. Hruska and B.~Kleiner}, {\em Hadamard spaces with isolated flats},
  Geom. Topol., 9 (2005), pp.~1501--1538.
\newblock With an appendix by the authors and Mohamad Hindawi.

\bibitem{Ito}
{\sc K.~Ito}, {\em Parameterizations of infinite biconvex sets in affine root
  systems}, Hiroshima Math. J., 35 (2005), pp.~425--451.

\bibitem{Lam-Pylyavskyy}
{\sc T.~Lam and P.~Pylyavskyy}, {\em Total positivity for loop groups {II}:
  {C}hevalley generators}, Transform. Groups, 18 (2013), pp.~179--231.

\bibitem{niblo-reeves}
{\sc G.~A. Niblo and L.~D. Reeves}, {\em Coxeter groups act on {${\rm CAT}(0)$}
  cube complexes}, J. Group Theory, 6 (2003), pp.~399--413.

\bibitem{Serre}
{\sc J.-P. Serre}, {\em Trees}, Springer Monographs in Mathematics,
  Springer-Verlag, Berlin, 2003.
\newblock Translated from the French original by John Stillwell, Corrected 2nd
  printing of the 1980 English translation.

\end{thebibliography}

\end{document}